\documentclass[11pt]{amsart}

\numberwithin{equation}{section}
\numberwithin{figure}{section}

\usepackage{tikz-cd}
\usepackage{amssymb}
\usepackage[T1]{fontenc}
\usepackage{lmodern}
\usepackage{mathtools}
\usepackage[headings]{fullpage}
\usepackage{tikz}
\usepackage{xparse}
\usepackage{color}
\usepackage[normalem]{ulem}
\usepackage[colorlinks=true,linkcolor=blue, citecolor=blue, urlcolor=blue,
pagebackref=true]{hyperref}
\usepackage{bookmark}
\usepackage{enumitem}
\usepackage{units}
\usepackage{ytableau}
\usepackage{varwidth}

\let\oldcenter\center
\let\oldendcenter\endcenter
\renewenvironment{center}{\setlength\topsep{4pt}\oldcenter}{\oldendcenter}

\newcount\tableauRow
\newcount\tableauCol

\newcommand\smdots{\raisebox{2px}{\makebox[1em][c]{\tiny.\kern.05\fontdimen3\font.\kern.05\fontdimen3\font.}}}
\makeatletter
\def\theenumi{\@alph\c@enumi}
\DeclareMathSizes{\@xipt}{\@xipt}{7}{6}
\makeatother

\theoremstyle{plain}
\newtheorem{theorem}[equation]{Theorem}
\newtheorem{lemma}[equation]{Lemma}
\newtheorem{corollary}[equation]{Corollary}
\newtheorem{proposition}[equation]{Proposition}
\theoremstyle{definition}

\newtheorem{remark}[equation]{Remark}

\newtheorem{example}[equation]{Example}

\newtheorem{definition}[equation]{Definition}

\newtheorem{notation}[equation]{Notation}

\newtheorem{discussion}[equation]{Discussion}

\newtheorem{observation}[equation]{Observation}

\newtheorem{construction}[equation]{Construction}

\newcommand{\MC}{\mathit{M}\kern -0.1em\mathrm{1}}
\newcommand{\MCC}{\mathit{M}\kern -0.1em\mathrm{2}}
\newcommand{\Grass}{\mathrm{G}}

\newcommand{\pl}{\mathrm{p}}
\newcommand{\Bl}{\mathrm{Block}}

\newcommand{\He}{\mathrm{Head}}
\newcommand{\GL}{\mathrm{GL}}
\newcommand{\SL}{\mathrm{SL}}

\newcommand\modp[1]{\textrm{ mod }#1}
\newcommand{\tab}{\mathbb{T}}
\newcommand{\ssyt}{\mathbb{T}}
\newcommand\sch[1]{s_{#1}}
\newcommand\lr[3]{c_{#1,#2}^{#3}}

\title[A classification of spherical Schubert varieties in the Grassmannian]{A classification of spherical Schubert varieties in the Grassmannian}
\author{Reuven Hodges}
\address{Northeastern University, Boston, Massachusetts. USA.}
\email{hodges.r@husky.neu.edu}

\author{Venkatramani Lakshmibai}
\address{Northeastern University, Boston, Massachusetts. USA.}
\email{lakshmibai@neu.edu}

\begin{document}

\begin{abstract}
 Let $L$ be a Levi subgroup of $GL_N$ which acts by left multiplication on a Schubert variety $X(w)$ in the Grassmannian $G_{d,N}$. We say that $X(w)$ is a spherical Schubert variety if $X(w)$ is a spherical variety for the action of $L$. In earlier work we provide a combinatorial description of the decomposition of the homogeneous coordinate ring of $X(w)$ into irreducible $L$-modules for the induced action of $L$. In this work we classify those decompositions into irreducible $L$-modules that are multiplicity-free. This is then applied towards giving a complete classification of the spherical Schubert varieties in the Grassmannian. 
\end{abstract}
\maketitle

\section{Introduction}
\label{sec:intro}
For a reductive group $G$, a normal $G$-variety $X$ is called a spherical variety if it has an open, dense $B$-orbit for a Borel subgroup $B$ of $G$. The spherical variety $X$, having a single open $B$-orbit, will also have a single open $G$-orbit of the form $G/H$, where $H$ is an algebraic subgroup of $G$. Such a subgroup is called a spherical subgroup, and in \cite{MR1896179}, Luna proposed a program to classify spherical subgroups of reductive groups in terms of combinatorial data that he termed the homogeneous spherical data. This classification has been completed, due to the contributions of many authors, see for example \cite{2009arXiv0907.2852C,MR3198836,MR3473657,MR2495078}. In earlier work, Luna and Vust classified the spherical embeddings of $G/H$, that is, embeddings of $G/H$ into a spherical variety such that $G/H$ is the open $G$-orbit, in terms of colored fans~\cite{MR705534}. 

The above results combine to give a complete classification of spherical varieties, but there are still many open questions in this setting. One such question is, what geometric properties of a spherical variety can be inferred by studying the associated spherical data, that is, the colored fan and homogeneous spherical data. One practical method of pursuing this question is to consider other well understood classes of varieties and ask under what conditions will they be spherical varieties.

With this in mind, let $Q$ be a parabolic subgroup of $G$. Denote by $W$ the Weyl group of $G$ and $W_Q$ the subgroup of $W$ corresponding to $Q$. Then $W^Q$ is defined to be the subset of minimal length right coset representatives of $W_Q$ in $W$. There is a natural action of $G$ on $G/Q$ given by left multiplication. For a $w \in W^Q$ we define the Schubert variety $X(w)$ to be the Zariski closure of the $B$-orbit of $wQ/Q$ in $G/Q$. These Schubert varieties will be stable under the action of certain parabolic subgroups $P$ of $G$, and hence $L$-stable for the reductive Levi subgroup $L$ of $P$. Two natural questions arise.

\begin{enumerate}[before=\setlength{\baselineskip}{5mm}, label=(\arabic*)]
\item Given a Schubert variety $X(w)$ in $G/Q$ that is $L$-stable, when is $X(w)$ a spherical $L$-variety?
\item If $X(w)$ is a spherical $L$-variety, what is the associated spherical data?
\end{enumerate} 
As the geometry of Schubert varieties is particularly well understood the answer to these questions would provide ample test cases for the project of inferring geometric properties of spherical varieties in terms of their spherical data. This paper provides a complete answer to the first question when $G/Q$ is the Grassmannian variety in type A. The first author along with M. Bilen Can explores question (1) for an arbitrary $G$ and $Q$ and shows that smooth Schubert varieties are always spherical varieties~\cite{2018arXiv180305515B}. Both authors, joint with M. Bilen Can, explore question (2) in~\cite{2018arXiv180704879B}. Further, in ~\cite{2018arXiv180704879B} the toroidal Schubert varieties in the Grassmannian are characterized, and in type A, the $\GL_p \times \GL_q$-spherical Schubert varieties are also studied.

We now give an outline of the results in this paper. A Levi-Schubert quadruple is defined to be the data $(w,d,N,L)$ where $X(w)$ is a Schubert variety in the Grassmannian $G_{d,N}$ of $d$-dimensional subspaces of $\mathbb{C}^N$ and $L$ is a Levi subgroup of $\GL_N$. Such a Levi-Schubert quadruple is called stable if the Schubert variety $X(w)$ is $L$-stable, and a stable quadruple is called spherical if $X(w)$ is a spherical $L$-variety. Our first step in classifying the spherical Levi-Schubert quadruples is to define the reduction of $(w,d,N,L)$, which is also a Levi-Schubert quadruple and is denoted $(\overline{w}, \overline{d}, \overline{N}, \overline{L})$. We then show that $(w,d,N,L)$ is spherical if and only if $(\overline{w}, \overline{d}, \overline{N}, \overline{L})$ is spherical. This reduction step makes the classification considerably simpler to state.

In a previous paper the authors give a combinatorial description of the decomposition of the homogeneous coordinate ring $\mathbb{C}[X(w)]$, for the Pl\"{u}cker embedding, into irreducible $L$-modules for the induced action of $L$~\cite{HodgesLakshmibai}. For a stable Levi-Schubert quadruple $(w,d,N,L)$ we say that it is multiplicity free if the decomposition of $\mathbb{C}[X(w)]$ into irreducible $L$-modules is multiplicity free. In Proposition~\ref{p:multFreeSphericalEquiv} it is shown that a stable $(w,d,N,L)$ is multiplicity free if and only if it is spherical.

In \cite{HodgesLakshmibai} the decomposition of $\mathbb{C}[X(w)$ into irreducible $L$-modules is given in two steps. In the first step, each degree piece of $\mathbb{C}[X(w)]$ is decomposed into simpler submodules. The second step then shows that these submodules are isomorphic to certain tensor products of skew Schur-Weyl modules (which can be easily decomposed into irreducible $L$-modules using the Littlewood-Richardson coefficients). In Proposition~\ref{p:MultFreeHighestLevel} we define two criteria $\MC$ and $\MCC$ that are stated in terms of the simpler submodules from the first step above. The proposition states that a Levi-Schubert quadruple is multiplicity free if and only if both $\MC$ and $\MCC$ are satisfied.

If $X(w)$ is a Schubert variety in $G_{d,N}$ then $w$ can be represented by the sequence $(\ell_1,\ldots,\ell_d)$ for some integers $1 \leq \ell_1 < \ell_2 < \cdots < \ell_d \leq N$ (see Section~\ref{subsec:SMT}). A Levi subgroup $L$ of $\GL_N$ is of the form $\GL_{N_1} \times \cdots \times \GL_{N_{b_L}}$ for some positive integers $b_L$ and $N_k$ with $1 \leq k \leq b_L$. Using $L$ we define a partition of $\{ 1, \ldots , N \}$ into subsets of consecutive integers denoted $\Bl_{L,k}$ for $1 \leq k \leq b_L$ (see Section~\ref{sec:Decomp}). Then the non-negative integers $h_1,\ldots,h_{b_L}$ are defined by $h_k = |\left\{j | \ell_j \in \Bl_{L, k} \right\}|$. 

In Propositions~\ref{p:MCCSatCrit} and \ref{p:MCSatCrit} we provide the exact combinatorial requirements on $b_L$, $h_1,\ldots,h_{b_L}$ and $N_1,\ldots,N_{b_L}$ such that $\MCC$ and $\MC$ are, respectively, satisfied. This allows us to prove our primary result. 

\begin{theorem}
\label{t:mainSphericalClassification}
The stable, reduced Levi-Schubert quadruple $(w,d,N,L)$ is multiplicity free (equivalently spherical) if and only if one of the following holds

\begin{enumerate}[label=(\roman*)]
\item $b_L \leq 2$
\item $b_L = 3$, and at least one of $N_2 = 1$, $h_1 + 1 \geq N_1$, $N_2=h_2$ with $h_1 + 2 \geq N_1$, $h_2 > 0$ with $h_3 < 2$, $h_2 = 0$ with $h_3 \leq 2$ holds
\item $b_L \geq 4$, $p_w = 2$ or if $p_w > 2$, then $h_1 + \cdots + h_{p_w - 1} + 1 \geq N_1 + \cdots + N_{p_w - 1}$
\end{enumerate}
where $1 < p_w < b_L - 1$ is the minimum index such that $h_{p_w + 1} + \cdots + h_{b_L} < 2$. Such an index may not exist, if it does not set $p_w = b_L - 1$.
\end{theorem}

We give even simpler criterion for a Levi-Schubert quadruple $(w,d,N,L)$ to be spherical in the case when $L$ is the maximal Levi subgroup which acts on $X(w)$ by left multiplication in Corollary~\ref{c:mainSphericalClassification}. As a nice application of this classification theorem, in Corollary~\ref{c:toricSchubert}, we give a description of the Schubert varieties in the Grassmannian that are toric varieties for a quotient of the maximal torus under the left multiplication action. 

This paper is organized into the following sections. In Section 2 the background and notation for spherical varieties, Schubert varieties, skew Schur functions, and skew Schur-Weyl modules is covered. Additionally, a few minor technical lemma involving Littlewood-Richardson coefficients are proved. Section 3 recalls the results and notation for the decomposition of the homogeneous coordinate ring of $X(w)$ into irreducible $L$-modules that was developed in \cite{HodgesLakshmibai}. Levi-Schubert quadruples and their reduction is covered in Section 4. The classification of the reduced, stable Levi-Schubert quadruples that are spherical is accomplished in Section 5. An application of these results to toric Schubert varieties is briefly discussed in Section 6.
\section{Preliminaries}
\label{sec:prelim}
\subsection{Spherical varieties}
\label{subsec:spherical}
Let $X$ be a normal $G$-variety for a reductive group $G$. The most common characterization given for $X$ to be a \emph{spherical variety} is that it has an open dense $B$-orbit for a Borel subgroup $B$ of $G$. In his survey of spherical varieties Perrin collects a number of other equivalent characterizations of spherical varieties which we recall here.

\begin{theorem}[{\cite[Theorem 2.1.2]{MR3177371}}] \label{t:perrinSpherical} The normal $G$-variety $X$ is spherical if any of the following hold.
  \begin{enumerate}[label=(\roman*)]
    \item $X$ has an open dense $B$-orbit
    \item $X$ has finitely many $B$-orbits
    \item $\mathbb{C}[X]^{B} = \mathbb{C}$, where $\mathbb{C}[X]$ is the homogeneous coordinate ring of $X$
    \item If $X$ is quasi-projective: For any $\mathcal{L}$ a $G$-linearized line bundle, the $G$-module $\mathrm{H}^0(X,\mathcal{L})$ is multiplicity free.
  \end{enumerate}  
\end{theorem}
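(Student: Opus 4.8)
This theorem is a synthesis of several classical facts, and the plan is to prove the equivalence of (i)--(iv) (each being equivalent to the defining condition, that $X$ has an open dense $B$-orbit) through a short cycle of implications, isolating the single genuinely deep input. I would first dispatch the easy directions. Since $X$ is irreducible and, under (ii), is a finite union of $B$-stable locally closed subsets, one of these has dimension $\dim X$; it is therefore dense, and the unique such orbit is open, giving (ii) $\Rightarrow$ (i). For (i) $\Rightarrow$ (iv), invoke Rosenlicht's theorem on rational invariants: if $B \cdot x$ is dense then $\mathbb{C}(X)^{B} = \mathbb{C}$, so any two $B$-eigenvectors of the same weight in a section module $\mathrm{H}^{0}(X,\mathcal{L})$ have ratio a $B$-invariant rational function, hence are proportional; since the multiplicity of the irreducible $G$-module of highest weight $\lambda$ in $\mathrm{H}^{0}(X,\mathcal{L})$ equals the dimension of the space of $U$-invariants of $T$-weight $\lambda$, this gives multiplicity-freeness. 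The implications involving (iii) are of the same flavour: $\mathbb{C}(X)^{B} = \mathbb{C}$ holds exactly when $B$ has a dense orbit, again by Rosenlicht, and for $X$ quasi-affine this is equivalent to the ring of regular functions being a multiplicity-free $G$-module.

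For the converse directions I would pass to an affine model and invoke the Vinberg--Kimelfeld criterion: a quasi-affine $G$-variety is spherical precisely when its ring of regular functions is a multiplicity-free $G$-module. Concretely, one replaces $X$ by the affine cone over the given (or a chosen ample) projective embedding --- respectively by the total space of the dual of the relevant $G$-linearized line bundle --- equipped with the induced action of $G \times \mathbb{C}^{\times}$; the hypothesis on $\mathrm{H}^{0}(X,\mathcal{L})$, respectively on the homogeneous coordinate ring, translates into multiplicity-freeness of the coordinate ring of this cone as a $G \times \mathbb{C}^{\times}$-module, hence into its sphericity, and a dense $B$-orbit on $X$ is recovered by projecting down a dense $B$-orbit on the cone. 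The bookkeeping here --- normality, quasi-affineness, the grading, and the distinction between $B$-invariants and $B$-semi-invariants of a fixed weight --- is exactly what Perrin's survey carries out carefully, and I would follow it.

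The remaining implication, (i) $\Rightarrow$ (ii) --- that a spherical variety has only finitely many $B$-orbits --- is the genuinely hard one and, I expect, the main obstacle; in a paper of this kind one simply cites it, it being a theorem of Brion and, independently, of Vinberg in characteristic zero, which is the setting here. Were one to reprove it, the argument proceeds via the Local Structure Theorem: choose a $B$-stable affine open $X_{0} \subseteq X$ meeting the dense $G$-orbit, of the form $X_{0} \cong R_{u}(P) \times Z$ for a parabolic $P \supseteq B$ and a locally closed slice $Z$ on which a Levi subgroup of $P$ acts with a dense orbit; bound the number of $B$-orbits meeting $X_{0}$ in terms of the finitely many orbits of that Levi action on $Z$, and then run Noetherian induction on the $B$-stable closed complement $X \setminus X_{0}$, each irreducible component of which is again a spherical variety of strictly smaller dimension. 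Combining (ii) $\Rightarrow$ (i) $\Rightarrow$ (iii), (iv), the converses above, and (i) $\Rightarrow$ (ii) closes the cycle and yields the full equivalence.
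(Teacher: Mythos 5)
The paper itself offers no proof of this statement: it is quoted as background from Perrin's survey, with the underlying facts due to Rosenlicht, Brion, Vinberg, and Vinberg--Kimelfeld. Measured against that source, your outline is essentially the standard argument: the dimension count for (ii) $\Rightarrow$ (i), Rosenlicht's theorem identifying $\mathbb{C}(X)^{B}=\mathbb{C}$ with the existence of a dense $B$-orbit and the resulting proportionality of $B$-eigensections for (i) $\Rightarrow$ (iv), the passage to the affine cone (or the total space of the dual line bundle) and the Vinberg--Kimelfeld criterion for the converse directions, and the Brion/Vinberg finiteness of $B$-orbits (local structure theorem plus Noetherian induction) for (i) $\Rightarrow$ (ii). Citing the finiteness theorem rather than reproving it is exactly what the cited survey, and this paper, do; and proving the full equivalence is the right reading, since the paper later uses the converse direction (spherical $\Rightarrow$ (iv)) in Proposition~\ref{p:multFreeSphericalEquiv}.

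One caveat you should make explicit. In item (iii) the paper writes $\mathbb{C}[X]^{B}=\mathbb{C}$ with $\mathbb{C}[X]$ the homogeneous coordinate ring, whereas your sketch silently works with the function field, $\mathbb{C}(X)^{B}=\mathbb{C}$. These are not interchangeable, and only the function-field version is a correct characterization: a $B$-fixed vector in a rational $G$-module is automatically $G$-fixed (complete reducibility, and an irreducible module of nonzero highest weight has no nonzero $B$-fixed vectors), so $\mathbb{C}[X]^{B}=\mathbb{C}[X]^{G}$, and this can be trivial without $X$ being spherical --- for instance $G=B=\mathbb{C}^{\times}$ acting on $X=\mathbb{P}^{2}$ with the linearization of weights $1,2,3$ has $\mathbb{C}[X]^{B}=\mathbb{C}$ but no dense orbit. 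So the literal implication (iii) $\Rightarrow$ (i) printed in the paper cannot be established by your route (or any other); the statement to prove is $\mathbb{C}(X)^{B}=\mathbb{C}$, which Rosenlicht handles exactly as you say. Since the paper only ever invokes characterizations (i) and (iv), nothing downstream is affected, but your write-up should name the substitution rather than make it tacitly.
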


\noindent We shall primarily make use of the first and fourth characterizations from Theorem~\ref{t:perrinSpherical}.

\subsection{Algebraic groups}
\label{subsec:AlgGrps}
In this section we will fix notation and briefly cover the algebraic groups background required for this paper. See \cite{MR1102012} for a more detailed treatment.

We will denote the group of invertible $N\times N$ matrices over $\mathbb{C}$ by $\GL_N$. Let $B$ be the standard Borel subgroup of upper triangular matrices with $T$ the standard maximal torus consisting of diagonal matrices. The character group of $T$, $\mathfrak{X}(T) := \mathrm{Hom}_{\mathrm{alg. gp.}}(T, \mathbb{C})$, will be written additively. Any finite dimensional $T$-module $V$ may be written as the sum of weight spaces
\begin{center}
$V = \displaystyle \bigoplus_{\chi \in \mathfrak{X}(T)} V_{\chi}$
\end{center}
where $V_{\chi} := \{ v \in V \mid t v = \chi(t)v, \forall t \in T \}$. We refer to $\chi \in \mathfrak{X}(T)$ as a weight in $V$ if $\mathrm{dim}V_{\chi}\neq 0$. 

For the Adjoint action of $T$ on $\mathfrak{gl}_N:=Lie(\GL_N)$ we define $\Phi$ to be the set of nonzero weights in $\mathfrak{gl}_N$. Then $\Phi = \{ \epsilon_i - \epsilon_j | 1 \leq i,j \leq N \}$ where $\epsilon_i - \epsilon_j$ is the element in $\mathfrak{X}(T)$ that sends the diagonal matrix with $t_1,\ldots,t_n$ on the diagonal to $t_i t_j^{-1}$ in $\mathbb{C}$. The elements in $\Phi$ are referred to as \emph{roots} and $\Phi$ is the \emph{root system} of $\GL_N$ relative to $T$. The Borel subgroup $B$ induces a subset of positive roots $\Phi^{+}= \{ \epsilon_i - \epsilon_j | 1 \leq i < j \leq N \}$ and a subset of simple roots $\Delta= \{ \alpha_i := \epsilon_i - \epsilon_{i+1} | 1 \leq i < N \}$ in $\Phi$.

A \emph{parabolic subgroup} of $\GL_N$ is a closed subgroup containing a Borel subgroup. A \emph{standard parabolic subgroup} is a parabolic subgroup containing $B$. For $1 \leq d < N$ define the \emph{maximal standard parabolic subgroup} $P_d$ as the subgroup containing all elements of $\GL_N$ with a block of zeros of size $N-d \times d$ in the bottom left.
\begin{center}
$P_{d}=\left\{ \left[ \begin{array}{cc}
* & * \\
0_{N-d \times d} & * \\
\end{array} \right] \in \GL_N \right\}$
\end{center}

There is a important bijection between the subsets of the simple roots $\Delta$ and the standard parabolic subgroups. For $I \subseteq \Delta$ we define 
\begin{center}
$P_I = \displaystyle \bigcap_{{\alpha_d} \in \Delta \setminus I } P_d$
\end{center} 

Any standard parabolic subgroup $P_I$ may be written as a semidirect product of its unipotent radical $U_I$ and a reductive subgroup $L_I$ called a Levi factor or Levi subgroup. For $I = \{ \alpha_{i_1},\ldots, \alpha_{i_q} \} \subset \Delta$ with $i_1 \leq \cdots \leq i_q$ and $\Delta \setminus I = \{ \alpha_{j_1},\ldots, \alpha_{j_r} \}$ with $j_1 \leq \cdots \leq j_r$ let $b_{L}=r+1$. Then setting $N_1 = j_1$, $N_k = j_k - j_{k-1}$ for $1 < k < b_L$, and $N_{b_L}=N-j_r$ we have that $N = N_1 + \cdots + N_{b_L}$ and
\begin{center}
$L = \GL_{N_1} \times \cdots \times \GL_{N_{b_L}}$ 
\end{center}

For this reason we will refer to $b_L$ as the \emph{number of blocks of $L$}.

The Weyl group $W$ of $\GL_N$ is generated by the simple reflections $s_{\alpha_i}$ for $\alpha_i \in \Delta$. The group $W$ is isomorphic to the symmetric group of permutations on $N$ letters via the map that identifies $s_{\alpha_i}$ with the transposition $(i, i+1)$. In light of this, we will refer to elements of $W$ by the sequence $(x_1,\cdots,x_N)$ which corresponds to the permutation that sends $i$ to $x_i$. The length of an element $w \in W$, denoted $\ell(w)$, is defined to be the minimum number $k$ such that $w$ may be written as the product of $k$ simple reflections. 

The subsets $I$ of $\Delta$ also index subgroups of $W$. We define $W_I$ to be the subgroup generated by $\{ s_{\alpha_i} | \alpha_i \in I \}$. Then we define a subset of $W$ corresponding to $I$,
\begin{center}
$W^I = \{ w \in W \mid \ell(ww') = \ell(w) + \ell(w'), \textrm{ for all }w' \in W_I \}$.
\end{center}
Then $W = W^I W_I$; that is, any element $w \in W$ can be written as the product $uv$ with $u \in W^I$, $v \in W_I$ and $\ell(w) = \ell(u) + \ell(v)$. Viewed in this way, $W^I$ is the set of minimal length right coset representatives of $W_I$ in $W$.

It will subsequently be convenient to identify the subgroup $W_I$ and subset $W^I$ by their associated parabolic subgroup. In particular, given a standard parabolic subgroup $P=P_I$, we will write $W_P$ and $W^P$ instead of  $W_I$ and $W^I$.

\subsection{Standard monomial theory}
\label{subsec:SMT}
The \emph{Grassmannian} $\Grass_{d,N}$ is the set of all $d$-dimensional subspaces of $\mathbb{C}^{N}$ and can be equipped with a projective variety structure via the Pl\"{u}cker embedding. The Pl\"{u}cker embedding is the map from $\Grass_{d,N}$ to $\mathbb{P}(\bigwedge^d \mathbb{C}^N)$ defined by sending a $d$-dimensional subspace $U$ with basis $\{u_1,\ldots,u_d\}$ to the class $[u_1 \wedge \cdots \wedge u_d]$. This map is well defined (does not depend on choice of basis for $U$) and injective. 

Define $I_{d,N}$ to be the set of strictly increasing positive integer sequences with $d$ values ranging from $1$ to $N$. Explicitly,
\begin{center}
$I_{d,N} = \{ (i_1,\ldots,i_d) | 1 \leq i_1 < \cdots < i_d \leq N \}$.
\end{center}
If $\{ e_1,\ldots,e_N\}$ are the standard basis vectors of $\mathbb{C}^N$, then $\{e_{\tau} := e_{i_1} \wedge \cdots \wedge e_{i_d} \mid \tau = (i_1,\ldots,i_d) \in I_{d,N} \}$ is a basis for $\bigwedge^d \mathbb{C}^N$. Defining $\pl_{\tau} := e_{\tau}^{*}$, we have that $\{\pl_{\tau} \mid \tau \in I_{d,N} \}$ is the dual basis for $(\bigwedge^d \mathbb{C}^N)^{*}$. These $\pl_{\tau}$ are a set of projective coordinates for $\mathbb{P}(\bigwedge^d \mathbb{C}^N)$ called the \emph{Pl\"{u}cker coordinates}. The image of the Grassmannian under the Pl\"{u}cker embedding is cut out scheme theoretically by certain quadratic relations in the Pl\"{u}cker coordinates called the \emph{Pl\"{u}cker relations}. Subsequently we will identify the Grassmannian by its image under the Pl\"{u}cker embedding.

The Grassmannian $\Grass_{d,N}$ is the $\GL_N$-orbit of $[e_{id}:=e_1 \wedge \cdots \wedge e_d]$ for the natural action of $\GL_N$ on $\mathbb{P}(\bigwedge^d \mathbb{C}^N)$. Under this action the $T$-fixed points of $\Grass_{d,N}$ are precisely $[e_{\tau}]$ for $\tau \in I_{d,N}$. The \emph{Schubert variety} $X(\tau)$ is defined to be the Zariski-closure of the $B$-orbit of $[e_{\tau}]$, that is, $X(\tau) := \overline{B[e_{\tau}]}$. The \emph{Bruhat order} on the set $I_{d,N}$ is induced by the containment order on the set of Schubert varieties; $\tau \leq w$ if and only if $X(\tau) \subseteq X(w)$. If $\tau = (i_1,\ldots,i_d)$ and $w=(\ell_1,\ldots,\ell_d)$, then it can be shown that $\tau \leq w$ is equivalent to $i_1 \leq \ell_1,\ldots,i_d \leq \ell_d$. 

\begin{remark}
We noted above that $\Grass_{d,N}$ is the $\GL_N$-orbit of $[e_{id}]$. The isotropy subgroup at $[e_{id}]$ is precisely $P_{d}$. Thus we identify $\Grass_{d,N}$ as the homogeneous space $\GL_N/ P_d$. Consequently, we see that $W^{P_d}$ may be identified with $I_{d,N}$ via the map that sends a $\tau=(i_1,\ldots,i_n) \in W^{P_d}$ to the sequence $(i_1,\ldots,i_d)\uparrow$, where $\uparrow$ indicates that the preceding sequence has been reordered so that it is strictly increasing. In light of these identifications, we shall index the Schubert varieties in $\Grass_{d,N}$ by elements of $W^{P_d}$ while denoting the elements of $W^{P_d}$ by their corresponding sequence in $I_{d,N}$.
\end{remark}

The homogeneous coordinate ring $\mathbb{C}[X(w)]$ of the Schubert variety $X(w)$ induced by the Pl\"{u}cker embedding is a polynomial algebra of the form
\begin{center}
$\mathbb{C}[X(w)] = \mathbb{C}[\pl_{\tau}, \tau \in W^{P_d}] / J$
\end{center}
where $J$ is the homogeneous ideal generated by the Pl\"{u}cker relations and $\{ \pl_{\tau}, \tau \nleq w \}$. We say that a degree r monomial $\pl_{\tau_1}\cdots\pl_{\tau_r} \in \mathbb{C}[X(w)]$ is a \emph{standard monomial on $X(w)$} if $w \geq \tau_1 \geq \cdots \geq \tau_r$. The following theorem illustrates the fundamental importance of standard monomials.
\begin{theorem}[{\cite[Proposition 5.4.7 and Theorem 5.4.8]{MR3408060}}]
\label{T:FundamentalSMT}
The degree r standard monomials on $X(w)$ are a vector space basis of $\mathbb{C}[X(w)]_r$.
\end{theorem}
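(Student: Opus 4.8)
The plan is to establish the two defining properties of a basis separately: first that the degree-$r$ standard monomials on $X(w)$ \emph{span} $\mathbb{C}[X(w)]_r$, and then that they are \emph{linearly independent}.

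For spanning, the idea is a straightening algorithm driven by the Plücker relations. By definition $\mathbb{C}[X(w)]_r$ is spanned by the images of arbitrary (not necessarily standard) degree-$r$ Plücker monomials $\pl_{\tau_1}\cdots\pl_{\tau_r}$, and the defining ideal $J$ contains both the quadratic Plücker relations and every $\pl_\tau$ with $\tau \not\leq w$. I would first discard any monomial having such a factor, leaving only products all of whose indices satisfy $\tau_i \leq w$. On the remaining monomials I would fix a well-founded order on the multiset of indices $\{\tau_1,\ldots,\tau_r\}\subset I_{d,N}$ (for instance, sort the $\tau_i$ into a single decreasing array and compare arrays lexicographically). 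The key algebraic input is that whenever two indices $\alpha,\beta$ occurring in a monomial are incomparable in the Bruhat order, the quadratic Plücker relations rewrite $\pl_\alpha\pl_\beta$ as a linear combination of products $\pl_{\alpha'}\pl_{\beta'}$ in which one factor is replaced by an index $\geq\alpha\vee\beta$ and the other by an index $\leq\alpha\wedge\beta$; each such replacement strictly decreases the monomial in the chosen order. Since the order is well-founded, repeated application terminates in a linear combination of monomials every pair of whose factors is comparable — that is, of standard monomials $\pl_{\tau_1}\cdots\pl_{\tau_r}$ with $w\geq\tau_1\geq\cdots\geq\tau_r$.

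For linear independence I would argue by a double induction — on $\dim X(w)$ and, for fixed $w$, on the degree $r$. The degree-one case is immediate, since $\{\pl_\tau : \tau\leq w\}$ is a basis of $\mathbb{C}[X(w)]_1$, and the zero-dimensional case ($X(w)$ a point) is trivial. For the inductive step I would use two properties of $\pl_w$: because $X(w)$ is irreducible, $\mathbb{C}[X(w)]$ is a domain, so $\pl_w$ is a non-zero-divisor and multiplication by it carries the degree-$(r-1)$ standard monomials bijectively onto the degree-$r$ standard monomials whose leading factor $\pl_{\tau_1}$ equals $\pl_w$; and $\pl_w$ vanishes on every $X(\tau)$ with $\tau<w$, its zero locus being the boundary $\partial X(w)=\bigcup_{\tau\lessdot w}X(\tau)$. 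Given a relation $\sum_m c_m\,m=0$ among degree-$r$ standard monomials, I would restrict to $\partial X(w)$: every monomial with leading factor $\pl_w$ dies, and restricting further to a single component $X(\tau)$ with $\tau\lessdot w$ leaves exactly those monomials with $\tau_1\leq\tau$, which are standard on $X(\tau)$. The outer inductive hypothesis (for the smaller-dimensional $X(\tau)$) forces $c_m=0$ whenever $\tau_1\leq\tau$; letting $\tau$ range over all covers of $w$ kills every coefficient with $\tau_1<w$. The surviving terms are all divisible by $\pl_w$, so cancelling it and invoking the inner inductive hypothesis in degree $r-1$ completes the argument.

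I expect the main obstacle to lie in two inputs that make this skeleton rigorous. The first is pinning down the exact shape of the quadratic Plücker relations and proving that straightening genuinely decreases the chosen order and hence terminates; this is the classical but delicate heart of the spanning statement. The second is the scheme-theoretic fact underlying the induction: that $\pl_w$ generates a \emph{radical} ideal in $\mathbb{C}[X(w)]$ whose zero locus is the reduced union $\bigcup_{\tau\lessdot w}X(\tau)$, so that restriction to the boundary really coincides with setting $\pl_w=0$ and the induced map $\mathbb{C}[\partial X(w)]_r\hookrightarrow\prod_{\tau\lessdot w}\mathbb{C}[X(\tau)]_r$ is injective. This rests on the reducedness (indeed normality) of Schubert varieties and their hyperplane sections. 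As an alternative to the inductive half, one could instead obtain independence by a dimension count — matching the number of degree-$r$ standard monomials with $\dim\mathbb{C}[X(w)]_r=\dim H^0(X(w),\mathcal{L}^{\otimes r})$ — or by a leading-term computation on the opposite big cell; combined with the spanning statement, either suffices.
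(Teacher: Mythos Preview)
The paper does not prove this theorem at all: it is quoted as background and attributed to \cite[Proposition~5.4.7 and Theorem~5.4.8]{MR3408060}, with no argument given. Your sketch is a faithful outline of the classical proof that appears in that reference (straightening via the quadratic Pl\"ucker relations for spanning, and an induction on $\dim X(w)$ using the vanishing of $\pl_w$ on the Schubert divisor for linear independence), and the two technical inputs you flag---termination of straightening in a well-founded order, and reducedness of the hyperplane section $\{\pl_w=0\}$---are exactly the ones the cited text addresses. So there is nothing to compare: your proposal is correct and coincides with the standard argument the paper is invoking by citation.
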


\subsection{Skew Young diagrams, skew Schur functions, and skew Weyl modules}
\label{subsec:skewYoung}
For a more in depth introduction to the concepts covered in this section see \cite{MR1676282}. A \emph{partition} $\lambda$ is a sequence of positive integers $(\lambda_1,\ldots,\lambda_k)$ such that $\lambda_1 \geq \cdots \geq \lambda_k$. It will be useful to be able to express arbitrarily large partitions. When we write $(a_1^{b_1},\cdots,a_r^{b^r})$ with $a_1 \geq \cdots \geq a_r$ and $b_i \geq 0$ we mean the partition with the first $b_1$ entries equal to $a_1$, the next $b_2$ entries equal to $a_2$, and so on. Note that we will often omit the superscript when it is equal to one. 

We associate to every partition $\lambda$ a \emph{Young diagram}, also denoted $\lambda$, which is a collection of upper left justified boxes with $\lambda_i$ boxes in row $i$. The Young diagram associated to a partition $\lambda$ will be said to have \emph{shape} $\lambda$. For example, the partition $(4,2^2)=(4,2,2)$ corresponds to the Young diagram
\begin{center}
\ytableausetup{smalltableaux}
\begin{ytableau}
\; & \; & \; & \; \\
\; & \; \\
\; & \; \\
\end{ytableau}
\end{center}
For a partition $\lambda = (\lambda_1,\ldots,\lambda_k)$ we say that the \emph{length} of the partition is the number of entries in the sequence, and denote it by $\ell(\lambda)$. The \emph{size} of the partition is $|\lambda|=\sum \lambda_i$. Given a second partition $\mu = (\mu_1,\ldots,\mu_j)$, we write $\mu \subseteq \lambda$ if $j \leq k$ and $\mu_i \leq \lambda_i$ for $1 \leq i \leq  j$. Equivalently, $\mu \subseteq \lambda$ if the Young diagram of $\mu$ is contained the Young diagram of $\lambda$. 

If $\mu \subseteq \lambda$, we define the skew (Young) diagram $\lambda / \mu$ to be the diagram formed by removing the leftmost $\mu_i$ boxes in row $i$ of $\lambda$ for each row. For example, the skew diagram $(4,2,2) / (2,1)$ is given by
\begin{center}
\ytableausetup{smalltableaux}
\begin{ytableau}
\none & \none & \; & \; \\
\none & \; \\
\; & \; \\
\end{ytableau}
\end{center}

A partition $\lambda = (\lambda_1,\ldots,\lambda_k)$ is said to be a \emph{rectangle} if all entries in the partition are equal to some positive integer $p$. A partition is called a \emph{hook} if there is a positive integer $p$ such that $\lambda_1=p$ and $\lambda_i=1$ for $i\geq 2$. A \emph{fat hook} is a partition such that all entries are equal to either $p$ or $q$ for two positive integers $p$, $q$.

A skew diagram is said to be \emph{basic} if it contains no empty rows or columns. Given a skew diagram $\lambda / \mu$ we define $\tilde{\lambda} / \tilde{\mu}$ to be the basic skew diagram formed by deleting all the empty rows and columns from $\lambda / \mu$. The \emph{$\pi$-rotation} $(\lambda / \mu)^{\pi}$ is the skew diagram that arises by rotating $\lambda / \mu$ through $\pi$ radians. The \emph{conjugate} of a partition $\lambda = (\lambda_1,\ldots,\lambda_k)$ is defined to be the partition $\lambda' = (\lambda'_1,\ldots,\lambda'_{\lambda_1})$ where each $\lambda'_i$ is equal to the number of boxes in column $i$ of $\lambda$. Fix $m$ and $n$ two positive integers. For a partition of length $n$, such that $\lambda \subseteq m^n$, we say that the \emph{$m^n$-complement} of $\lambda$ is $\lambda^\# := (m- \lambda_{n},\ldots,m-\lambda_1)$. Note that $\lambda^\# = (m^n / \lambda)^{\pi}$. The \emph{$m^n$-shortness} of a partition $\lambda$ is the length of the shortest line segment in the path of length $m+n$ from the southwest to northeast corners of $m^n$ that contains the bottom and right contour of $\lambda$.

\begin{example}
Consider the partitions $\lambda = (4,2,2,1)$ and $\mu = (2,2)$. Then
\begin{center}
\ytableausetup{smalltableaux}
$\lambda / \mu =\;\;\;$ \begin{ytableau}
\none & \none & \; & \; \\
\none & \none \\
\; & \; \\
\;  \\
\end{ytableau}
\qquad \qquad 
$\tilde{\lambda} / \tilde{\mu} =\;\;\;$ \begin{ytableau}
\none & \none & \; & \; \\
\; & \; \\
\;  \\
\end{ytableau}
\qquad \qquad 
$(\lambda / \mu)^{\pi} = \;\;\; $ \begin{ytableau}
\none & \none & \none & \; \\
\none & \none & \; & \; \\
\none & \none \\
\; & \; \\
\end{ytableau}
\end{center}
and if $m=n=4$ we have the following Young diagrams. 
\begin{center}
\ytableausetup{smalltableaux}
$\lambda =\;\;\; $\begin{ytableau}
\; & \; & \; & \; \\
\; & \; \\
\; & \; \\
\; \\
\end{ytableau}
\qquad \qquad
$\lambda' =\;\;\; $\begin{ytableau}
\; & \; & \; & \; \\
\; & \; & \; \\
\; \\
\; \\
\end{ytableau}
\qquad \qquad
$\lambda^\# =\;\;\; $\begin{ytableau}
\; & \; & \; \\
\; & \; \\
\; & \; \\
\end{ytableau}
\end{center}
The $m^n$-shortness of $\lambda$ is 1 while the $m^n$-shortness of $\mu$ is 2 as illustrated by the shortest line segments of the  respective paths below.
\begin{center}
\begin{tikzpicture}[inner sep=0in,outer sep=0in]
\node (n) {\begin{varwidth}{5cm}{
\begin{ytableau}
\; & \; & \; & \; \\
\; & \; \\
\; & \; \\
\; \\
\end{ytableau}}\end{varwidth}};
\draw[black, dotted] (n.south west)--++(1.21,0)--++(0,1.21);
\draw[very thick,orange] (n.south west)--++(.25*1.21,0)--++(0,.25*1.21)--++(.25*1.21,0)--++(0,.25*1.21)--++(0, .25*1.21)--++(.25*1.21,0)--++(.25*1.21,0)--++(0, .25*1.21);
\end{tikzpicture}
\qquad \qquad
\begin{tikzpicture}[inner sep=0in,outer sep=0in]
\node (n) {\begin{varwidth}{5cm}{
\begin{ytableau}
\; & \; \\
\; & \; \\
\none \\
\none \\
\end{ytableau}}\end{varwidth}};
\draw[black, dotted] (n.south west)--++(1.21,0)--++(0,1.21);
\draw[very thick,orange] (n.south west)--++(0, .25*1.21)--++(0,.25*1.21)--++(.25*1.21,0)--++(.25*1.21,0)--++(0, .25*1.21)--++(0,.25*1.21)--++(.25*1.21,0)--++(.25*1.21,0);
\end{tikzpicture}
\end{center}
\end{example}

Given a skew diagram $\lambda / \mu$, we say that a \emph{filling} of shape $\lambda / \mu$ in $\{ 1,...,n \}$ is an assignment of a value in $\{ 1,...,n \}$ to each box in $\lambda / \mu$. A \emph{tableau} is a filling such that the values in each column increase strictly downwards. A \emph{semistandard tableau} is a tableau such that the values in each row increase weakly along each row. The \emph{weight} of a filling equals $\nu = (\nu_1,...,\nu_n)$ where $\nu_i$ equals the number of boxes with value $i$ in $\ssyt$. An example, from left to right, of a filling, tableau, and semistandard tableau of shape $(4,2,2) / (2,1)$ in $\{ 1,...,4 \}$ is given below. 
\begin{center}
\ytableausetup{smalltableaux}
\begin{ytableau}
\none & \none & 2 & 1 \\
\none & 2 \\
3 & 1 \\
\end{ytableau}
\qquad
\begin{ytableau}
\none & \none & 4 & 3 \\
\none & 2 \\
1 & 3 \\
\end{ytableau}
\qquad
\begin{ytableau}
\none & \none & 2 & 3 \\
\none & 3 \\
2 & 4 \\
\end{ytableau}
\end{center}
The respective weights are $(2,2,1,0)$, $(1,1,2,1)$, and $(0,2,2,1)$.

Given a skew diagram $\lambda / \mu$, the associated \emph{skew Schur function} is
\begin{center}
$\sch{\lambda/\mu} = \displaystyle \sum_{\ssyt} x_1^{\textrm{\# of 1's in }\ssyt}\cdots x_k^{\textrm{\# of k's in }\ssyt}$
\end{center}
where the infinite sum is over all semistandard tableaux of shape $\lambda/\mu$ and $k$ is the maximum value in the semistandard tableau. Then, for a partition $\lambda$, the \emph{Schur function} associated to $\lambda$ is defined to be $\sch{\lambda} := \sch{\lambda/ \emptyset}$ where $\emptyset$ is the zero partition. Though not immediately apparent, the Schur and skew Schur functions are symmetric. In fact, the ring of symmetric functions has a basis given by the Schur functions. The Littlewood-Richardson coefficients $\lr{\mu}{\nu}{\lambda}$ appear as the structure coefficients for multiplication in this ring. That is, for partitions $\mu$ and $\nu$ we have
\begin{equation}
\sch{\mu}\sch{\nu} = \displaystyle \sum_{\lambda} \lr{\mu}{\nu}{\lambda} \sch{\lambda}
\end{equation}
where the sum is over all partitions $\lambda$ such that $|\lambda| = |\mu| + |\nu|$. The Littlewood-Richardson coefficients $\lr{\mu}{\nu}{\lambda}$ also appear in the expansion of the skew Schur functions
\begin{equation}
\sch{\lambda / \mu} = \displaystyle \sum_{\nu} \lr{\mu}{\nu}{\lambda} \sch{\nu},
\end{equation}
where the sum is over all $\nu$ such that $|\lambda| - |\mu| = |\nu|$. A skew Schur function is \emph{multiplicity-free} if, in the expansion of the skew Schur function into the basis of Schur functions, all the nonzero Littlewood-Richardson coefficients are equal to 1. 

Fix a positive integer $N$. The \emph{skew Schur polynomial} $\sch{\lambda/\mu}(x_1,\ldots,x_N)$ is a specialization of the skew Schur function $\sch{\lambda/\mu}$ achieved by setting $x_m=0$ for all $m>N$. The \emph{Schur polynomial} $\sch{\lambda}(x_1,\ldots,x_N)$ is defined to be $\sch{\lambda/ \emptyset}(x_1,\ldots,x_N)$. The Schur polynomials associated to partitions of length less than or equal $N$ give a basis for the ring of symmetric functions in variables $x_1,\ldots,x_N$. Thus
\begin{equation}
\label{e:SchurPolynomial}
\sch{\lambda / \mu}(x_1,\ldots,x_N) = \displaystyle \sum_{\nu} \lr{\mu}{\nu}{\lambda} \sch{\nu}(x_1,\ldots,x_N),
\end{equation}
where the sum is over all $\nu$ with $\ell(\nu) \leq N$ such that $|\lambda| - |\mu| = |\nu|$.

\begin{remark}
\label{r:suffMultFree}
We will say that a skew Schur polynomial is multiplicity free if the expansion into the basis of Schur polynomials is multiplicity free. Importantly, if the skew Schur function $\sch{\lambda/\mu}$ is multiplicity free then the skew Schur polynomial $\sch{\lambda/\mu}(x_1,\ldots,x_N)$ is multiplicity free. However, the converse is not true, as there might be partitions $\nu$ of length greater than $N$ such that $\lr{\mu}{\nu}{\lambda} > 1$ but none with length less than or equal $N$.

\end{remark}

The first of the two identities below may be found in \cite{MR1676282} while the second follows trivially from the Littlewood-Richardson rule (see Section~\ref{subsec:LRRule}).
\begin{equation}
\label{e:SchurPiRotation}
\sch{\lambda / \mu} = \sch{(\lambda / \mu)^{\pi}}
\end{equation}
\begin{equation}
\label{e:SchurBasic}
\sch{\lambda / \mu} = \sch{\tilde{\lambda} / \tilde{\mu}}
\end{equation}

The second identity implies that the classification of multiplicity-free Schur functions reduces to a classification of basic multiplicity-free Schur functions. This classification was achieved by Thomas and Yong in \cite{MR2583223} (see also \cite[Theorem 4.3]{MR2737323}).

\begin{theorem}[\cite{MR2583223}]
\label{T:skewMultFree}
The basic skew Schur function $s_{\lambda / \mu}$ is multiplicity-free if and only if $\lambda$ and $\mu$ satisfy one or more of the following conditions:
\begin{enumerate}
\item $\mu$ or $\lambda^{\#}$ is the zero partition
\item $\mu$ or $\lambda^{\#}$ is a rectangle of $m^{n}$-shortness 1
\item $\mu$ is a rectangle of $m^{n}$-shortness 2 and $\lambda^{\#}$ is a fat hook (or vice versa)
\item $\mu$ is a rectangle and $\lambda^{\#}$ is a fat hook of $m^{n}$-shortness 1 (or vice versa)
\item $\mu$ and $\lambda^{\#}$ are rectangles
\end{enumerate}
where $m=\lambda_1$, $n=\lambda'_1$, and $\lambda^{\#}$ is the $m^n$-complement of $\lambda$.
\end{theorem}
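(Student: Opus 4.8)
This is the theorem of Thomas and Yong \cite{MR2583223}; I sketch the plan of attack. By \eqref{e:SchurBasic} it suffices to treat the case when $\lambda/\mu$ is basic, so that $m=\lambda_1$ and $n=\lambda_1'$ are the width and height of the bounding box and $\lambda$ fills it. Since $\sch{\lambda/\mu}=\sum_\nu \lr{\mu}{\nu}{\lambda}\sch{\nu}$, what must be shown is that $\lr{\mu}{\nu}{\lambda}\le 1$ for every $\nu$. The first step is to record two symmetries of this condition. The symmetry $\lr{\mu}{\nu}{\lambda}=\lr{\nu}{\mu}{\lambda}$ is immediate. The second comes from applying the $\pi$-rotation identity \eqref{e:SchurPiRotation} inside the box $m^n$: a direct check of cell coordinates gives $(\lambda/\mu)^{\pi}=\mu^{\#}/\lambda^{\#}$, so $\sch{\lambda/\mu}=\sch{\mu^{\#}/\lambda^{\#}}$, and hence $\sch{\lambda/\mu}$ is multiplicity-free if and only if $\sch{\mu^{\#}/\lambda^{\#}}$ is. Under this rotation the inner partition $\mu$ and the complement $\lambda^{\#}$ trade places; this is exactly why only $\mu$ and $\lambda^{\#}$ enter the statement and why conditions (2)--(4) come with an ``or vice versa''. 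One checks at the outset that the list (1)--(5) is closed under this exchange, after which it remains to prove sufficiency and necessity for the list as stated.

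For sufficiency, the plan is to argue family by family that every $\lr{\mu}{\nu}{\lambda}$ is at most $1$. If $\mu$ or $\lambda^{\#}$ is zero the skew Schur function is an honest Schur function by the rotation identity, so (1) is trivial. In cases (2) and (4) the relevant piece, after deleting empty rows and columns and possibly rotating, becomes a single row or column, or a fat hook of shortness $1$; here I would control the coefficients by the Pieri rule and its dual, using that for a fixed shape $\lambda/\mu$ the operation of adding or removing a horizontal or vertical strip is multiplicity-free. For (3) and (5) I would reduce to Stembridge's classification of multiplicity-free products $\sch{\alpha}\sch{\beta}$: in these configurations the basic shape $\lambda/\mu$ decomposes, up to non-interacting empty strips, as a disjoint union realizing a product of a rectangle with a rectangle, or a rectangle with a fat hook, and these are precisely the multiplicity-free products. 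The handful of remaining boundary sub-cases I would dispatch directly from the Littlewood--Richardson rule by checking that skew tableaux of the given shape with any prescribed content are unique.

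Necessity is the substantive part, and the main obstacle. Assuming $\lambda/\mu$ is basic and violates all five conditions, I must produce a $\nu$ with $\lr{\mu}{\nu}{\lambda}\ge 2$. The plan has two ingredients. First, a monotonicity lemma: if $\lambda^{-}/\mu^{-}$ is obtained from $\lambda/\mu$ by deleting boundary cells (with $\mu^{-}\subseteq\mu$, $\lambda^{-}\subseteq\lambda$, the deleted cells forming horizontal and vertical strips in the appropriate places), then multiplicity-freeness of $\sch{\lambda/\mu}$ forces multiplicity-freeness of $\sch{\lambda^{-}/\mu^{-}}$; this reduces the problem to finitely many minimal counterexamples. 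Second --- and this is where the real work lies --- an exhaustive but elementary geometric analysis showing that any basic $\lambda/\mu$ failing (1)--(5) contains one of a short list of minimal ``bad'' subshapes, the prototype being $(4,3,2,1)/(2,2)$, for which $\lr{(2,2)}{(3,2,1)}{(4,3,2,1)}=2$ (this shape presents the product $\sch{(2,1)}\sch{(2,1)}$, whose Schur expansion has a coefficient $2$). The delicate point is that the thresholds in (2)--(5) --- shortness $1$ versus $2$, fat hook versus rectangle --- are sharp, so the bookkeeping that shows a bad subshape must appear whenever the pair $(\mu,\lambda^{\#})$ falls outside all of the permitted shapes requires care. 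This case analysis is the crux; a complete treatment is given in \cite{MR2583223}, and a related account appears in \cite{MR2737323}.
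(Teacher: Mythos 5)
The paper does not prove this statement: it is imported verbatim from Thomas and Yong \cite{MR2583223} (see also \cite{MR2737323}) and used as a black box, so there is no in-paper argument to compare yours against. Your outline is a faithful roadmap of how the actual proof goes, and the concrete claims you make along the way check out: the cell-by-cell computation does give $(\lambda/\mu)^{\pi}=\mu^{\#}/\lambda^{\#}$ inside the $m^{n}$ box, which together with \eqref{e:SchurPiRotation} and $\lr{\mu}{\nu}{\lambda}=\lr{\nu}{\mu}{\lambda}$ explains both why only the pair $(\mu,\lambda^{\#})$ enters the statement and why the list of conditions is symmetric under exchanging them; and your prototype bad shape $(4,3,2,1)/(2,2)$ is basic, disconnects into two copies of $(2,1)$, and indeed satisfies $\lr{(2,2)}{(3,2,1)}{(4,3,2,1)}=2$.

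That said, as submitted this is a strategy rather than a proof, and the gap is exactly where you locate it. Two points deserve flagging. First, the sufficiency reduction to Stembridge's classification of multiplicity-free products is looser than you state: not every rectangle-times-fat-hook product is multiplicity-free (Stembridge's list imposes further constraints on which rectangle/fat-hook pairs qualify), and, more importantly, the shapes arising in conditions (2)--(4) are generally connected, so they do not literally decompose as products of their pieces; the shortness parameters measure precisely how far they are from disconnecting, and handling them requires a direct Littlewood--Richardson (or Pieri-chain) argument rather than an appeal to the product case. Second, the necessity direction --- the deletion/monotonicity lemma together with the exhaustive identification of minimal non-multiplicity-free subshapes and the verification that the shortness-$1$-versus-$2$ and fat-hook-versus-rectangle thresholds are sharp --- is the entire mathematical content of the theorem, and you defer it wholesale to \cite{MR2583223}. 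For the purposes of this paper that is acceptable, since the authors do exactly the same, but it should then be presented as a citation with commentary rather than as a proof.
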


The skew diagrams also index certain distinguished representations of $\GL_N$. For $\lambda / \mu$ a skew diagram we denote the corresponding \emph{skew Weyl module}, equivalently Schur functor, by $\mathbb{W}^{\lambda / \mu}(\mathbb{C}^N)$ (see \cite[\S 6.1]{MR1153249} for the details of this construction). We will normally simplify this notation by writing $\mathbb{W}^{\lambda / \mu}$ as long as no confusion will arise from doing so.

For a partition $\lambda$ the corresponding \emph{Weyl module} is $\mathbb{W}^{\lambda} := \mathbb{W}^{\lambda / \emptyset}$. The Weyl modules $\mathbb{W}^{\lambda}$ such that $\ell(\lambda) \leq N$ are precisely the polynomial irreducible representations of $\GL_N$. We have that $\GL_N$ is completely reducible since it is a reductive group and we are working over $\mathbb{C}$. Thus any $\GL_N$-representation may by written uniquely, up to isomorphism, as a direct sum of irreducible representations. The decomposition of $\mathbb{W}^{\lambda / \mu}$ into irreducible representations has a particularly nice description in terms of the Littlewood-Richardson coefficients.
\begin{equation}
\label{e:SkewWeylDecomp}
\mathbb{W}^{\lambda / \mu} = \displaystyle \bigoplus (W^{\nu})^{\oplus \lr{\mu}{\nu}{\lambda}}
\end{equation}
\begin{remark}
\label{r:weylModuleMultFree}
This identity follows from the fact that $\sch{\lambda / \mu}(x_1,\ldots,x_N)$ is the character of $\mathbb{W}^{\lambda / \mu}$. In particular, this implies that $\mathbb{W}^{\lambda / \mu}$ has a multiplicity-free decomposition into irreducible $\GL_N$-modules if and only if $\sch{\lambda / \mu}(x_1,\ldots,x_N)$ is multiplicity-free. Following Remark~\ref{r:suffMultFree} we conclude that Theorem~\ref{T:skewMultFree} gives sufficient conditions on $\lambda / \mu$ for $\mathbb{W}^{\lambda / \mu}$ to have a multiplicity free decomposition.
\end{remark}

\subsection{Computing Littlewood-Richardson coefficients via the Littlewood-Richardson rule}
\label{subsec:LRRule}
Many of our results will rely on the ability to compute certain Littlewood-Richardson coefficients. To facilitate these computations we recall an identity and the Littlewood-Richardson Rule\cite[Section~5]{MR1464693}. The identity is a non-trivial symmetry of the Littlewood-Richardson coefficients and its proof may be found in \cite{MR1676282}, it states that
\begin{equation}
\label{equation:littlewoodRichardsonIdentities}
c_{\mu, \nu}^{\lambda} = c_{\nu, \mu}^{\lambda}.
\end{equation}

The \emph{row word} of a semistandard tableau $\ssyt$, denoted $w_{row}(\ssyt)$, is the values in $\ssyt$ written from left to right and bottom to top. If a row word equals $t_1,...,t_r$ we say that it is a \emph{reverse lattice word} if the number $i$ appears at least as often as $i+1$ in every reversed subsequence $t_r,t_{r-1},...,t_{s+1},t_s$. A semistandard tableau $\ssyt$ such that $w_{row}(\ssyt)$ is a reverse lattice word is a \emph{semistandard Littlewood-Richardson tableau}. 

\begin{example} Consider the two semistandard tableaux below.
\begin{center}
\qquad \qquad \qquad
\begin{ytableau}
\none & \none & 1 & 1 \\
\none & 1 \\
1 & 3 \\
2 \\
3 \\
\end{ytableau}
\qquad \qquad \qquad
\begin{ytableau}
\none & \none & 1 & 1 \\
\none & 2 \\
1 & 3 \\
2 \\
3 \\
\end{ytableau}
\end{center}
Their respective row words are 3,2,1,3,1,1,1 and 3,2,1,3,2,1,1. The second is a reverse lattice word while the first is not; the number of 3's in 1,1,1,3 is greater than the number of 2's. Thus only the second is a semistandard Littlewood-Richardson tableau.
\end{example}

\begin{proposition}[{\cite[Proposition 5.3]{MR1464693} Littlewood-Richardson rule}] \label{p:LittlewoodRichardson} The Littlewood-Richardson coefficient $c_{\mu,\nu}^{\lambda}$ is equal to the number of semistandard Littlewood-Richardson tableaux of shape $\lambda / \mu$ and weight $\nu$.
\end{proposition}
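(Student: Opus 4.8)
The plan is to derive the rule from \emph{jeu de taquin} and Knuth equivalence on words, in the spirit of Sch\"utzenberger's original argument (as presented, e.g., in Fulton's \emph{Young Tableaux} or in \cite{MR1676282}); the only inputs needed beyond that machinery are the skew Schur expansion \eqref{e:SchurPolynomial} and the linear independence of Schur polynomials, both already recorded above. Throughout I fix $N \geq |\lambda| - |\mu|$, so that every partition $\nu$ with $|\nu| = |\lambda| - |\mu|$ has $\ell(\nu) \leq N$ and it suffices to work with symmetric polynomials in $x_1,\ldots,x_N$ and compare coefficients in the Schur basis.

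First I would set up the word combinatorics. Call two words \emph{Knuth equivalent} if they differ by a sequence of elementary Knuth moves, and recall the fundamental theorem --- this is the content of jeu de taquin --- that every word $w$ is Knuth equivalent to the row word of a \emph{unique} semistandard tableau of partition shape, its \emph{rectification}; write $R(\ssyt)$ for the rectification of $w_{row}(\ssyt)$. Since Knuth moves preserve the content of a word, the semistandard tableaux of a fixed skew shape $\lambda/\mu$ split into Knuth classes, each of a single content and pinned down by the straight-shape tableau $R(\ssyt)$. Grouping the terms of $\sch{\lambda/\mu} = \sum_{\ssyt} x^{\ssyt}$ (the sum over semistandard tableaux of shape $\lambda/\mu$) by Knuth class gives
\[
\sch{\lambda/\mu} \;=\; \sum_{\nu}\ \sum_{U \text{ semistandard of shape } \nu} \#\bigl\{\ssyt \text{ of shape } \lambda/\mu : R(\ssyt) = U\bigr\}\; x^{\mathrm{content}(U)}.
\]
The key claim is that the inner multiplicity $\#\{\ssyt : R(\ssyt) = U\}$ depends only on $\nu = \mathrm{shape}(U)$, not on $U$ itself; granting this and calling the common value $d_{\lambda/\mu}^{\nu}$, the inner sum collapses to $d_{\lambda/\mu}^{\nu}\sch{\nu}$, so $\sch{\lambda/\mu} = \sum_\nu d_{\lambda/\mu}^{\nu}\sch{\nu}$. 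Comparing with \eqref{e:SchurPolynomial} and using the linear independence of the $\sch{\nu}(x_1,\ldots,x_N)$ forces $\lr{\mu}{\nu}{\lambda} = d_{\lambda/\mu}^{\nu}$. It then remains to evaluate $d_{\lambda/\mu}^{\nu}$ with a convenient choice of $U$: take $U$ to be the Yamanouchi tableau $U_\nu$ with $i$-th row filled entirely by $i$'s, and use the elementary lemma that a word is Knuth equivalent to $w_{row}(U_\nu)$ exactly when it is a reverse lattice word of content $\nu$. Thus $R(\ssyt) = U_\nu$ iff $w_{row}(\ssyt)$ is a reverse lattice word of content $\nu$, i.e.\ iff $\ssyt$ is a semistandard Littlewood--Richardson tableau of shape $\lambda/\mu$ and weight $\nu$, which is the asserted count.

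The substantive point --- and the step I expect to be the main obstacle --- is the $U$-independence of $\#\{\ssyt : R(\ssyt) = U\}$, which is really the full strength of jeu de taquin: one must show that rectification does not depend on the order of the inward slides, and exhibit a weight-preserving bijection between $\{\ssyt \text{ of shape } \lambda/\mu : R(\ssyt) = U\}$ and $\{\ssyt \text{ of shape } \lambda/\mu : R(\ssyt) = U'\}$ for any two semistandard tableaux $U, U'$ of the same shape (for instance via the tableau-switching involution, or by performing on the whole skew filling the slides that carry $U$ to $U'$). Once that is in place, the leftover bookkeeping --- tracking contents through Knuth moves and identifying reverse lattice words as the Knuth class of a Yamanouchi tableau --- is routine. (Alternatively, one can realize $\sch{\nu}$ as the character of the $\GL_N$-crystal on semistandard tableaux of shape $\nu$ and read $\lr{\mu}{\nu}{\lambda}$ off as the number of highest-weight vertices of weight $\lambda$ in the tensor product crystal, but this only relocates the technical core into crystal theory.)
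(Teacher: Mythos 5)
The paper does not prove this proposition at all: it is imported verbatim, with citation, from Fulton (MR1464693), so there is no internal argument to compare against. Your sketch is precisely the classical Sch\"utzenberger-style proof that the cited source gives --- group the semistandard tableaux of shape $\lambda/\mu$ by their rectification, invoke the fact that the number of skew tableaux rectifying to a fixed straight-shape tableau $U$ depends only on the shape of $U$, identify the resulting coefficients with $c_{\mu,\nu}^{\lambda}$ by comparing with the expansion \eqref{e:SchurPolynomial} and linear independence of Schur polynomials, and then evaluate the count at the Yamanouchi tableau, where ``rectifies to $U_{\nu}$'' becomes ``row word is a reverse lattice word of content $\nu$,'' i.e.\ the Littlewood--Richardson tableau condition. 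The outline is correct, and you have correctly isolated the genuine technical content (well-definedness of rectification and the $U$-independence of the count, e.g.\ via tableau switching); of course, as written this is a proof sketch that defers exactly those two facts to the jeu de taquin machinery, which is the same machinery the paper's citation already encapsulates, so nothing is gained or lost relative to simply citing the rule as the authors do.
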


\noindent We will now use this proposition to prove two lemma involving the Littlewood-Richardson coefficients that will be useful in Section \ref{sec:class}.

\begin{lemma}
\label{l:MultFreePolyNotFunction}
Let $\lambda=(r^{N},p,q)$, $\mu=(a,b)$ be two partitions with $0 < b \leq a < r$ and $0 < q \leq p < r$. Then the skew Schur polynomial $\sch{\lambda / \mu}(x_1,\ldots,x_N)$ is multiplicity free and hence $\mathbb{W}^{\lambda / \mu}$ has a multiplicity free decomposition into irreducible $\GL_N$-modules.
\end{lemma}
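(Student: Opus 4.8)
The plan is to reduce the claimed multiplicity-freeness of $\sch{\lambda/\mu}(x_1,\ldots,x_N)$ to a statement about the *skew Schur function* $\sch{\lambda/\mu}$, so that Theorem~\ref{T:skewMultFree} applies (via Remarks~\ref{r:suffMultFree} and \ref{r:weylModuleMultFree}). First I would pass to the basic skew diagram $\tilde{\lambda}/\tilde{\mu}$ using \eqref{e:SchurBasic}: since $\mu=(a,b)$ has only two rows and $\lambda=(r^N,p,q)$ has all of columns $1,\ldots,q$ occupied in every row, no full column of $\lambda/\mu$ is empty; the only empty rows would come from the interaction of $\mu$ with the top two rows of $\lambda$, but as $a,b<r=\lambda_1=\lambda_2$, rows $1$ and $2$ of $\lambda/\mu$ are nonempty as well, so in fact $\lambda/\mu$ is already basic. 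Then I would compute the relevant $m^n$-complement: with $m=\lambda_1=r$ and $n=\lambda_1'=N+2$ (note $\ell(\lambda)=N+2$ since $q>0$), the partition $\lambda^{\#}$ is the $m^n$-complement $(r-\lambda_{N+2},\ldots,r-\lambda_1)=(r-q,\,r-p,\,0^N)$, i.e.\ essentially the two-row partition $(r-q,r-p)$ padded with zeros.

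Now the situation is symmetric: both $\mu=(a,b)$ and $\lambda^{\#}=(r-q,r-p)$ are partitions with at most two parts. The key observation is that a two-row partition is a fat hook (it has at most two distinct part-sizes, $\lambda_1$ and $\lambda_2$), and in fact if its two parts are equal it is a rectangle. So I would split into cases according to whether $a=b$ and whether $p=q$:
\begin{enumerate}[label=(\alph*)]
\item If $a=b$ and $p=q$: both $\mu$ and $\lambda^{\#}$ are rectangles, so condition (5) of Theorem~\ref{T:skewMultFree} holds.
\item If exactly one of them is a rectangle (say $a=b$, $p>q$): then $\mu$ is a rectangle and $\lambda^{\#}$ is a genuine two-row fat hook; I would check that it has $m^n$-shortness $1$ (the bottom-right contour of the two-row shape inside $r^{N+2}$ has a long vertical run of length $\geq N$ and a long horizontal run, and the *shortest* segment has length $1$ whenever $r-p\neq 0$ and $r-q\neq r$, which is forced by $0<q\leq p<r$), so condition (4) applies.
\item If neither is a rectangle ($a>b$ and $p>q$): both are genuine fat hooks, neither is a rectangle — and here Theorem~\ref{T:skewMultFree} as stated does \emph{not} directly cover the case ``fat hook / fat hook,'' so this is where the real work lies.
\end{enumerate}

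The main obstacle is case (c). The resolution I anticipate is that the skew \emph{function} $\sch{\lambda/\mu}$ may genuinely have multiplicities in this case — which is exactly why the lemma is titled ``MultFreePolyNotFunction'' — so the argument must use the truncation to $N$ variables essentially, not just Theorem~\ref{T:skewMultFree}. Concretely, by \eqref{e:SchurPolynomial} we have $\sch{\lambda/\mu}(x_1,\ldots,x_N)=\sum_{\ell(\nu)\leq N}\lr{\mu}{\nu}{\lambda}\sch{\nu}(x_1,\ldots,x_N)$, so I need $\lr{\mu}{\nu}{\lambda}\leq 1$ for every $\nu$ with $\ell(\nu)\leq N$. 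I would compute $\lr{\mu}{\nu}{\lambda}$ directly via the Littlewood--Richardson rule (Proposition~\ref{p:LittlewoodRichardson}): count semistandard LR tableaux of shape $\lambda/\mu=(r^N,p,q)/(a,b)$ and content $\nu$. Because $\mu$ lives only in the first two rows and $\lambda$ is a rectangle $r^N$ plus the two extra rows $p,q$ at the bottom, the rectangular bulk $r^N$ rigidly constrains such a tableau: the reverse-lattice condition together with strict increase down each of the $r$ tall columns forces entry $i$ to occupy (most of) row $i$ in the rectangular part, leaving only bounded freedom in rows $1$–$2$ (the part skewed by $\mu$) and rows $N+1$, $N+2$ (the parts of sizes $p$ and $q$). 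I would argue that the fillings of these four ``boundary'' regions are determined up to at most one choice once $\nu$ with $\ell(\nu)\leq N$ is fixed — the constraint $\ell(\nu)\leq N$ is what kills the extra fillings that would arise if longer $\nu$ were allowed — yielding $\lr{\mu}{\nu}{\lambda}\leq 1$. The bookkeeping here (tracking which values can appear in rows $1,2$ versus in the short bottom rows, and showing the reverse-lattice word has a unique completion) is the technical heart; I expect it to parallel, and perhaps directly reduce to, the known analysis of two-rowed skew shapes, and it is plausible the author instead embeds $\lambda/\mu$ inside a larger diagram where Theorem~\ref{T:skewMultFree} can be applied after all and then argues the truncation only removes (not creates) multiplicities. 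Either route, the crux is controlling case (c) and exploiting $\ell(\nu)\leq N$.
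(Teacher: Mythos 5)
Your reduction via Theorem~\ref{T:skewMultFree} correctly disposes of the cases where $a=b$ or $p=q$ (there the skew Schur \emph{function} is already multiplicity-free, and your shortness computations check out), but the entire content of the lemma is your case (c), and there your argument is a sketch whose key step is asserted rather than proved. You propose to bound $\lr{\mu}{\nu}{\lambda}$ by counting Littlewood--Richardson tableaux of shape $\lambda/\mu$ with weight $\nu$ and claim the filling of the ``boundary regions'' is determined up to at most one choice, but you give no mechanism by which the hypothesis $\ell(\nu)\leq N$ forces this: when the weight $\nu$ is an arbitrary partition with up to $N$ parts, the rectangular bulk $r^N$ does not rigidly pin down the entries, and the bookkeeping you defer is precisely the lemma. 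Your fallback guess (embed $\lambda/\mu$ in a larger shape where Theorem~\ref{T:skewMultFree} applies and argue that truncation only removes multiplicities) cannot work, since in case (c) the function $\sch{\lambda/\mu}$ genuinely has a Littlewood--Richardson coefficient $\geq 2$ --- which is exactly why the lemma is stated for the polynomial and not the function.

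The missing idea is to apply the symmetry $\lr{\mu}{\nu}{\lambda}=\lr{\nu}{\mu}{\lambda}$ of \eqref{equation:littlewoodRichardsonIdentities} and count instead the tableaux of shape $\lambda/\nu$ with weight $\mu=(a,b)$. Because this weight has only two nonzero parts, a nonzero count forces every column of $(r^N,p,q)/\nu$ to have at most two boxes; combined with $\ell(\nu)\leq N$ this restricts $\lambda/\nu$ to a staircase-like basic shape with columns of height at most two, and one then checks directly that the semistandard and reverse-lattice-word conditions leave no freedom at all in placing the $a$ ones and $b$ twos, so $\lr{\nu}{\mu}{\lambda}\in\{0,1\}$ for every admissible $\nu$. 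This single uniform argument covers all of your cases (a)--(c) at once and needs no appeal to Theorem~\ref{T:skewMultFree}. Without the symmetry swap, or an equivalently sharp direct analysis of shape $\lambda/\mu$ with a general weight, your treatment of the crucial case is incomplete.
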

\begin{proof}
We begin by noting that Theorem~\ref{T:skewMultFree} implies that $\sch{\lambda / \mu}$ is not multiplicity free, nonetheless the result still holds.  Let $\nu=(\nu_1,\ldots,\nu_m)$ be a partition of length $m \leq N$ such that $|\lambda| - |\mu| = |\nu|$; we will show that $\lr{\mu}{\nu}{\lambda} \leq 1$. To do this we will once again use  \eqref{equation:littlewoodRichardsonIdentities} to equivalently show that $\lr{\nu}{\mu}{\lambda} \leq 1$. We begin by considering how we might fill the skew diagram $(r^{N},p,q) / \nu$ with $a$ ones and $b$ twos such that a semistandard tableau results. The fact that we can only use ones and twos immediately means that we can only successfully construct such a semistandard tableau when $(r^{N},p,q) / \nu$ has no more than two boxes in any column. Since the length of $\nu$ is less than or equal $N$ this restricts us to those $\nu$ such that the associated basic form of $(r^{N},p,q) / \nu$ is
\begin{center}
\begin{ytableau}
\none & \none &  \none & \none & \none  & \none  & \none & \none & \none & & \none[\smdots] &  \\
\none & \none &  \none & \none & \none  & \none  & & \none[\smdots] & & & \none[\smdots] &  \\
 & \none[\smdots] &  & & \none[\smdots] &   \\
 & \none[\smdots] &  \\
\end{ytableau}
\end{center}
We will show that there are no choices when filling such a skew Young diagram with $a$ ones and $b$ twos if we wish the row word to be a reverse lattice word. First, we are forced to fill all the columns with two boxes with ones and twos. If we wish for the row word to be a reverse lattice word we are then forced to put a one in the rightmost column of row two that contains a single box. Note that this is true even if there are no columns with two boxes on the right.
\begin{center}
\begin{ytableau}
\none & \none &  \none & \none & \none  & \none  & \none & \none & \none & 1 & \none[\smdots] & 1 \\
\none & \none &  \none & \none & \none  & \none  & & \none[\smdots] & 1 & 2 & \none[\smdots] & 2 \\
 1& \none[\smdots] &1  & & \none[\smdots] &   \\
 2& \none[\smdots] &2  \\
\end{ytableau}
\end{center}
Now we are forced to put ones in the rest of the boxes in row two since we need a semistandard tableau. Finally, there is only one way to fill in the remaining empty boxes in row three such that the result is a semistandard tableau. It is possible that at some point in the preceding discussion we could not proceed because we would have had to use more than $a$ ones or $b$ twos; in this case the associated Littlewood-Richardson coefficient is zero. Otherwise, we could fill in the skew Young diagram but every choice was prescribed. Thus $\sch{\lambda / \mu}(x_1,\ldots,x_N)$ is multiplicity free since any Littlewood-Richardson coefficient in \eqref{e:SchurPolynomial} is equal to 0 or 1.
\end{proof}

\begin{lemma} Let $n \geq 0$ and $m > 0$.
\label{lemma:LRSphericalClassH}
\begin{enumerate}
\item \label{lemma:LRSphericalClassH1} If $\lambda=(2^n,1,1)$, $\mu=(1)$, and $\nu=(2^n,1)$, then $c_{\mu,\nu}^{\lambda}=1$.
\item \label{lemma:LRSphericalClassH2} If $\lambda=(2^{n+1})$, $\mu=(1)$, and $\nu=(2^n,1)$, then $c_{\mu,\nu}^{\lambda}=1$.
\item \label{lemma:LRSphericalClassH3} If $\lambda=(2^{m},1)$, $\mu=(1,1)$, and $\nu=(2^{m-1},1)$, then $c_{\mu,\nu}^{\lambda}=1$.
\item \label{lemma:LRSphericalClassH4} If $\lambda=(3^m,2,1)$, $\mu=(2,1)$ and $\nu=(3^{m-1},2,1)$, then $c_{\mu,\nu}^{\lambda}=2$.
\item \label{lemma:LRSphericalClassH5} Let $\lambda$ and $\mu$ be partitions such that $\mu \subset \lambda$. Let $\nu = (n)$ where $n = |\lambda|-|\mu|$. Then $\lr{\mu}{\nu}{\lambda}=1$. 
\end{enumerate}
\end{lemma}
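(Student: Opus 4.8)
The plan is to derive all five statements from the Littlewood--Richardson rule (Proposition~\ref{p:LittlewoodRichardson}) together with the symmetry $c_{\mu,\nu}^{\lambda}=c_{\nu,\mu}^{\lambda}$ of~\eqref{equation:littlewoodRichardsonIdentities}. The feature that makes the four coefficients in (a)--(d) tractable is that the partition $\mu$ is always small, so after the exchange $c_{\mu,\nu}^{\lambda}=c_{\nu,\mu}^{\lambda}$ it is enough to count the semistandard Littlewood--Richardson tableaux of shape $\lambda/\nu$ and weight $\mu$; since $|\lambda/\nu|=|\lambda|-|\nu|=|\mu|$ is small, this skew shape has very few boxes. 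The bulk of the proof will therefore be to identify these skew shapes explicitly and to run through the reverse lattice word condition on the handful of candidate fillings; no induction on $n$ or $m$ is needed.

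For (a), taking $\mu=(1)$ as the weight, a row-by-row comparison shows that $\lambda/\nu=(2^{n},1,1)/(2^{n},1)$ is the single box in row $n+2$; it has one filling (by a $1$), whose row word is the length-one word $1$, vacuously a reverse lattice word, so the coefficient is $1$. Statement (b) is the same computation with $\lambda/\nu=(2^{n+1})/(2^{n},1)$ the single box in row $n+1$. For (c), again making $\mu=(1,1)$ the weight, the skew shape $\lambda/\nu=(2^{m},1)/(2^{m-1},1)$ consists of the two boxes in positions $(m,2)$ and $(m+1,1)$; these sit in distinct rows and distinct columns, so both fillings by one $1$ and one $2$ are automatically semistandard, and exactly one of them --- the one with the $2$ in the lower box $(m+1,1)$ --- has a row word that is a reverse lattice word, so the coefficient is $1$.

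For (d), the same reduction with weight $\mu=(2,1)$ yields the skew shape $\lambda/\nu=(3^{m},2,1)/(3^{m-1},2,1)$, which for every $m\ge 1$ is precisely the three boxes $(m,3)$, $(m+1,2)$, $(m+2,1)$; these lie in pairwise distinct rows and columns, so all three fillings by two $1$'s and a single $2$ are semistandard. Reading each row word from the bottom row up, the filling with the $2$ in the topmost box $(m,3)$ violates the reverse lattice condition at its very first reversed letter, whereas the two fillings that place the $2$ in the middle box or in the bottom box both satisfy it, so the coefficient is $2$. For (e), which is Pieri's rule, Proposition~\ref{p:LittlewoodRichardson} says the coefficient counts weight-$(n)$ semistandard Littlewood--Richardson tableaux of shape $\lambda/\mu$; such a tableau must fill every box with a $1$, so there is at most one, and when $\lambda/\mu$ is a horizontal strip this unique filling is a valid semistandard tableau whose row word $1,\dots,1$ is a reverse lattice word, giving $c_{\mu,(n)}^{\lambda}=1$.

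The only mildly delicate point is (d): one has to be scrupulous about the convention that the row word is read left to right and bottom to top, and about which reversed initial segments the reverse lattice condition constrains, because the three candidate fillings differ only in the position of the single $2$, and it is exactly this condition that separates the two that contribute from the one that does not. The remaining parts become immediate once the symmetry has collapsed the relevant skew shape to one or two disconnected boxes.
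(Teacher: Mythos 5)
Your proposal is correct and follows essentially the same route as the paper: apply the symmetry \eqref{equation:littlewoodRichardsonIdentities} and then count, via Proposition~\ref{p:LittlewoodRichardson}, the semistandard Littlewood--Richardson fillings of the one-, two-, and three-box skew shapes $\lambda/\nu$ of weight $\mu$, reaching the same counts in (1)--(4). In part (5) your explicit horizontal-strip caveat is in fact slightly more careful than the paper's argument (which tacitly assumes the all-ones filling is semistandard), and it is consistent with how the lemma is used later, namely that coefficients with $\nu$ of length one are zero or one.
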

\begin{proof}
Note that in these partitions, when the exponent of an entry is 0 we simply omit that entry from the partition. For example, when $n=0$ in \eqref{lemma:LRSphericalClassH1} we have $\lambda=(2^0,1,1)=(1,1)$.

\noindent \eqref{lemma:LRSphericalClassH1}: The identity \eqref{equation:littlewoodRichardsonIdentities} implies that $c_{\mu,\nu}^{\lambda}=c_{\nu,\mu}^{\lambda}$. Using Proposition \ref{p:LittlewoodRichardson}, we find $c_{\nu,\mu}^{\lambda}$ by counting the number of semistandard Littlewood-Richardson tableaux of shape $(2^n,1,1)/(2^n,1)$ with weight $(1)$. Since $(2^n,1,1)/(2^n,1)$ is a single box there is exactly one semistandard Littlewood-Richardson tableau with weight $\mu=(1)$. Hence $c_{\nu,\mu}^{\lambda}=1$.

\noindent \eqref{lemma:LRSphericalClassH2}: As $\lambda / \nu=(2^{n+1})/(2^n,1)$ is a single box, Proposition \ref{p:LittlewoodRichardson} and \eqref{equation:littlewoodRichardsonIdentities} imply that $c_{\mu,\nu}^{\lambda}=c_{\nu,\mu}^{\lambda}=1$.

\noindent \eqref{lemma:LRSphericalClassH3}: Once again we use \eqref{equation:littlewoodRichardsonIdentities} to see that $c_{\mu,\nu}^{\lambda}=c_{\nu,\mu}^{\lambda}$. Then $\lambda / \nu=(2^{m},1)/(2^{m-1},1)$ is two disconnected boxes. There are two possible fillings of these disconnected boxes with weight $(1,1)$. Only the filling with a 1 in the upper right box and a 2 in the lower left box is a semistandard Littlewood-Richardson tableau. Thus we have $c_{\mu,\nu}^{\lambda}=c_{\nu,\mu}^{\lambda}=1$.

\noindent \eqref{lemma:LRSphericalClassH4}: As in the previous cases we calculate $c_{\nu,\mu}^{\lambda}$. The skew diagram $\lambda / \nu=(3^m,2,1)/(3^{m-1},2,1)$ is three disconnected boxes. There are two possible fillings of these boxes with weight $(2,1)$ whose row word is a reverse lattice word. Thus we have $c_{\mu,\nu}^{\lambda}=c_{\nu,\mu}^{\lambda}=2$.

\noindent \eqref{lemma:LRSphericalClassH5}: It is clear that any filling of $\lambda / \mu$ with $n = |\lambda|-|\mu|$ ones can be done in exactly one way. Further, the row word with all ones is a reverse lattice word. Hence $\lr{\mu}{\nu}{\lambda}=1$.
\end{proof}

\section{The decomposition of the homogeneous coordinate ring}
\label{sec:Decomp}

Fix positive integers $d < N$. Then $\Grass_{d,N} = \GL_N / P_d$. Let $w \in W^{P_d}$ and let $P$ be a standard parabolic subgroup that acts on the Schubert variety $X(w)$ by left multiplication. This induces an action of the Levi part of $P$, which we will denote by $L$, on $X(w)$, which in turn induces an action of $L$ on the homogeneous coordinate ring $\mathbb{C}[X(w)]$. In \cite{HodgesLakshmibai}, the authors give a combinatorial description of the decomposition of $\mathbb{C}[X(w)]$ into irreducible $L$-modules for this induced action. We recall this result as well as the relevant definitions and notation.

As remarked in Section~\ref{subsec:AlgGrps}, the standard parabolic subgroup $P$ must be of the form $P_I$ for some $I=\{ \alpha_{i_1},\ldots,\alpha_{i_q} \} \subseteq \Delta$ with $i_1 < \cdots < i_q$ and $\Delta \setminus I = \{ \alpha_{j_1},\ldots, \alpha_{j_r} \}$ with $j_1 \leq \cdots \leq j_r$. Recall that $b_L := r+1$. If $N_1 = j_1$, $N_k = j_k - j_{k-1}$ for $1\leq k < b_L$, and $N_{b_L}=N-j_r$ then $N = N_1 + \cdots + N_{b_L}$ and $L = \GL_{N_1} \times \cdots \times \GL_{N_{b_L}}$. Let $\Bl_{L,k}=\{ j_{k-1}+1,\ldots,j_k \}$ for $1 \leq k \leq b_L$ where $j_0 = 0$ and $j_{b_L}=N$. Then the subsets $\Bl_{L,1}$,\ldots,$\Bl_{L,b_L}$, which we refer to as the \emph{blocks of L}, are a partition of $\{ 1,\ldots,N \}$. It is an easy check that $N_k = | \Bl_{L, k} |$. We give an example of these subsets below in Example~\ref{e:blocks}.

Denote by $H_w$ the subset of $W^{P_d}$ containing all $\tau \leq w$. Then for a $\theta \in H_w$ we say that $\theta$ is a \emph{degree 1 head of type $L$} if $X_{\theta}$ is a $L$-stable Schubert subvariety of $X_w$.
\begin{proposition}[{\cite[Proposition~3.1.6]{HodgesLakshmibai}}]
\label{p:HeadLComb}
A $\theta \in H_w$ is a degree 1 head of type $L$ if and only if $\theta \cap \Bl_{L,k}$ is maximal for all $1 \leq k \leq b_L$; explicitly we require that for all $m \in \theta \cap \Bl_{L,k}$ and $n \in \Bl_{L,k} \setminus \theta \cap \Bl_{L,k}$ we have $m > n$. 
\end{proposition}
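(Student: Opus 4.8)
The plan is to reduce the $L$-stability of the Schubert variety $X(\theta)$ to stability under the minimal parabolics attached to the simple roots in $I$, and then to convert this group-theoretic condition into a combinatorial statement about the $d$-subset $\theta = \{i_1 < \cdots < i_d\} \subseteq \{1,\ldots,N\}$. The first step is the reduction itself. Since $X(\theta)$ is a Schubert variety it is $B$-stable, hence stable under $T$ and under every positive root subgroup; in particular it is stable under the unipotent radical $U_I$ of $P_I$, which is built from positive root subgroups only. As $P_I = L \cdot U_I$, stability of $X(\theta)$ under $L$ is therefore equivalent to stability under the full parabolic $P_I$. Because $P_I$ is generated by $B$ together with the minimal parabolics $P_{\alpha}$ for $\alpha \in I$, the variety $X(\theta)$ is $L$-stable if and only if it is $P_{\alpha}$-stable for every $\alpha \in I$.

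Next I would invoke the standard minimal-parabolic criterion: for a simple root $\alpha_m$ and $\theta \in W^{P_d}$, the Schubert variety $X(\theta)$ is $P_{\alpha_m}$-stable if and only if $s_{\alpha_m}\theta \leq \theta$ in the Bruhat order, where $s_{\alpha_m}\theta$ is read through its minimal coset representative. Translating via the identification of $\theta$ with its $d$-subset, the reflection $s_{\alpha_m}$ swaps the values $m$ and $m+1$, so $s_{\alpha_m}\theta$ differs from $\theta$ only when exactly one of $m,m+1$ lies in $\theta$. Using the coordinatewise description of Bruhat order ($\tau \leq w \iff i_j \leq \ell_j$ for all $j$), one checks that $s_{\alpha_m}\theta > \theta$ precisely when $m \in \theta$ and $m+1 \notin \theta$ (replacing $m$ by the larger value $m+1$), whereas in every other case $s_{\alpha_m}\theta \leq \theta$. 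Hence $X(\theta)$ fails to be $P_{\alpha_m}$-stable exactly when $m \in \theta$ and $m+1 \notin \theta$.

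To assemble the conclusion I would identify which simple roots lie in $I$. From the definition $\Bl_{L,k} = \{j_{k-1}+1,\ldots,j_k\}$, the block boundaries sit at the indices $j_1,\ldots,j_r$, and $\Delta \setminus I = \{\alpha_{j_1},\ldots,\alpha_{j_r}\}$; thus $\alpha_m \in I$ if and only if $m$ and $m+1$ lie in a common block. Combining this with the previous step, $X(\theta)$ is $L$-stable if and only if there is \emph{no} index $m$ with $m,m+1$ in the same block $\Bl_{L,k}$, $m \in \theta$, and $m+1 \notin \theta$. Examining one block $\Bl_{L,k} = \{a,\ldots,b\}$ at a time, the absence of this forbidden "member immediately followed by a non-member" pattern is equivalent to the membership indicator along the block being a run of non-members followed by a run of members; that is, every element of $\theta \cap \Bl_{L,k}$ exceeds every element of $\Bl_{L,k} \setminus \theta$. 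This is exactly the maximality condition in the statement, and since the argument is an "if and only if" throughout, both directions follow simultaneously.

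The main obstacle is the first reduction paragraph, namely establishing cleanly that $L$-stability, $P_I$-stability, and simultaneous $P_{\alpha}$-stability for all $\alpha \in I$ coincide, and pinning down the correct direction of the Bruhat inequality in the minimal-parabolic criterion. Particular care is needed in the degenerate case where $s_{\alpha_m}\theta \notin W^{P_d}$ (i.e.\ both or neither of $m,m+1$ lie in $\theta$), where $s_{\alpha_m}\theta$ collapses to the coset representative $\theta$ and stability still holds; overlooking this case would misclassify the boundary configurations. Once the forbidden local pattern has been correctly isolated, the remaining block-by-block combinatorial equivalence is elementary.
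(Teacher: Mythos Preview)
The paper does not supply its own proof of this proposition; it is quoted verbatim from \cite[Proposition~3.1.6]{HodgesLakshmibai} and stated here without argument. Your proposal is a correct and complete proof, and it follows the standard line one would expect: reduce $L$-stability of $X(\theta)$ to $P_I$-stability using $U_I \subset B$, then to $P_{\alpha_m}$-stability for each $\alpha_m \in I$ via the generation of $P_I$ by minimal parabolics, apply the criterion $s_{\alpha_m}\theta \le \theta$, and translate this into the blockwise ``no member immediately followed by a non-member'' condition, which is exactly the maximality statement. Your handling of the degenerate case (both or neither of $m,m+1$ in $\theta$, so that $s_{\alpha_m}\theta$ and $\theta$ have the same coset representative) is also correct.
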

We will denote the subset of $H_w$ that contains all the degree 1 heads of type $L$ by $\He_{L,1}$. Fix a positive integer $r$. A \emph{degree r head of type $L$} is a sequence $\underline{\theta} = (\theta_1,\ldots,\theta_r)$ such that $\theta_i \in \He_{L,1}$. A degree r head is \emph{standard} if in addition $\theta_1 \geq \cdots \geq \theta_r$. Define 
\begin{center}
$\He_{L,r} = \{ (\theta_1,\ldots,\theta_r) | \theta_i \in \He_{L,1} \}$
\end{center}
and
\begin{center}
$\He_{L,r}^{std} = \{ (\theta_1,\ldots,\theta_r) \in \He_{L,r} | \theta_1 \geq \cdots \geq \theta_r \}$.
\end{center}

One final set of definitions is required before we can describe the decomposition from \cite{HodgesLakshmibai}. Given a standard degree r head of type L we associate it to a collection of $k$ skew diagrams. Let $\underline{\theta} = (\theta_1,\ldots,\theta_r) \in \He_{L,r}^{std}$. We begin by defining the semistandard tableau $\tab_{\underline{\theta}}$ of shape $(r^d)$ by letting the columns of $\tab_{\underline{\theta}}$ correspond to the $\theta_i$ in reverse order. Explicitly, the values from top to bottom in column $c$ of $\tab_{\underline{\theta}}$ correspond to the first to last entries in $\theta_{r-c+1}$ for $1 \leq c \leq r$.

Fix a $k$ such that $1 \leq k \leq b_L$. Then $\tab_{\underline{\theta}}^{(k)}$ is the basic semistandard tableau formed by first deleting all boxes in $\tab_{\underline{\theta}}$ with values not in $\Bl_{L,k}$ and then deleting all empty rows and columns (we omit the step of subtracting $j_k$ from the value in each box as is done in \cite{HodgesLakshmibai} since here we only care about the shape of the skew semistandard tableaux). This semistandard tableau has some shape, which we will write as $\lambda_{\underline{\theta}}^{(k)} / \mu_{\underline{\theta}}^{(k)}$. Finally, we define the $L$-module associated to $\underline{\theta}$ by
\begin{center}
$\mathbb{W}_{\underline{\theta}} := \mathbb{W}^{\lambda_{\underline{\theta}}^{(1)} / \mu_{\underline{\theta}}^{(1)}}(\mathbb{C}^{N_1}) \otimes \cdots \otimes \mathbb{W}^{\lambda_{\underline{\theta}}^{(b_L)} / \mu_{\underline{\theta}}^{(b_L)}}(\mathbb{C}^{N_{b_L}})$.
\end{center} 

\begin{theorem}[{\cite[Theorem~3.5.4]{HodgesLakshmibai}}]
\label{t:LeviSchubertMainDecomp}
For a fixed $r$, we have a decomposition of $\mathbb{C}[X(w)]_r$ into $L$-modules given by
\begin{center}
$\mathbb{C}[X(w)] \cong \displaystyle \bigoplus_{\underline{\theta} \in \He_{L,r}^{std}} \mathbb{W}^*_{\underline{\theta}}$
\end{center}
where $\mathbb{W}^*_{\underline{\theta}}$ is the $L$-module dual of $\mathbb{W}_{\underline{\theta}}$.
\end{theorem}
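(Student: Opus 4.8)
The plan is to reduce the assertion to an identity of $L$-characters and then invoke complete reducibility. By Theorem~\ref{T:FundamentalSMT} the degree $r$ standard monomials $\pl_{\tau_1}\cdots\pl_{\tau_r}$ with $w\geq\tau_1\geq\cdots\geq\tau_r$ form a basis of $\mathbb{C}[X(w)]_r$, and each of them is an eigenvector for the diagonal torus $T$ of $\GL_N$, which is also a maximal torus of $L$; the weight of $\pl_\tau$ is $-\sum_{i\in\tau}\epsilon_i$. Thus the standard monomials are a weight basis of $\mathbb{C}[X(w)]_r$ as an $L$-module, and since $L$ is reductive and we are over $\mathbb{C}$ it suffices to show that the character of $\mathbb{C}[X(w)]_r$ equals $\sum_{\underline{\theta}\in\He_{L,r}^{std}}\mathrm{char}\,\mathbb{W}^*_{\underline{\theta}}$.

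The combinatorial heart is to sort the standard monomial basis into classes indexed by $\He_{L,r}^{std}$, one for each summand. To $\pl_{\tau_1}\cdots\pl_{\tau_r}$ I would attach the tuple $\underline{\theta}=(\theta_1,\dots,\theta_r)$ in which $\theta_i$ is the \emph{block top-justification} of $\tau_i$: for each $k$, replace $\tau_i\cap\Bl_{L,k}$ by the $|\tau_i\cap\Bl_{L,k}|$ largest elements of $\Bl_{L,k}$. One checks that block top-justification is monotone for the Bruhat order and -- because $X(w)$ is $L$-stable, so that $w$ is a degree $1$ head of type $L$ and hence block top-justified by Proposition~\ref{p:HeadLComb} -- that it preserves the property of being $\leq w$; hence each $\theta_i\in\He_{L,1}$, the tuple satisfies $\theta_1\geq\cdots\geq\theta_r$, so $\underline{\theta}\in\He_{L,r}^{std}$, and conversely every element of $\He_{L,r}^{std}$ arises. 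This partitions the standard monomial basis of $\mathbb{C}[X(w)]_r$ into classes indexed by $\He_{L,r}^{std}$. Fixing $\underline{\theta}$, I would match its class with $\prod_{k=1}^{b_L}\bigl\{\text{skew semistandard tableaux of shape }\lambda_{\underline{\theta}}^{(k)}/\mu_{\underline{\theta}}^{(k)}\text{ with entries in }\Bl_{L,k}\bigr\}$: encode the monomial as the tableau $\tab$ of shape $(r^d)$ whose $c$-th column is $\tau_{r-c+1}$ (read top to bottom), exactly as in the definition of $\tab_{\underline{\theta}}$, then pass to the tuple of block restrictions $\tab^{(k)}$. Because top-justification preserves block cardinalities, $|\tau_i\cap\Bl_{L,k}|=|\theta_i\cap\Bl_{L,k}|$, so $\tab^{(k)}$ has shape $\lambda_{\underline{\theta}}^{(k)}/\mu_{\underline{\theta}}^{(k)}$. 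Now the skew Schur polynomial $\sch{\lambda_{\underline{\theta}}^{(k)}/\mu_{\underline{\theta}}^{(k)}}(x_1,\dots,x_{N_k})$, which is the character of $\mathbb{W}^{\lambda_{\underline{\theta}}^{(k)}/\mu_{\underline{\theta}}^{(k)}}(\mathbb{C}^{N_k})$, is the generating function of exactly these skew semistandard tableaux (after the relabeling $\Bl_{L,k}\leftrightarrow\{1,\dots,N_k\}$); and the weight of $\pl_{\tau_1}\cdots\pl_{\tau_r}$ is the sum over $k$ of its block contributions, each being the negative of the weight of $\tab^{(k)}$ since Pl\"{u}cker coordinates carry contragredient weights. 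Summing over the class of head $\underline{\theta}$ therefore recovers $\prod_k\mathrm{char}\,\mathbb{W}^{\lambda_{\underline{\theta}}^{(k)}/\mu_{\underline{\theta}}^{(k)}}(\mathbb{C}^{N_k})^*=\mathrm{char}\,\mathbb{W}^*_{\underline{\theta}}$, and summing over $\He_{L,r}^{std}$ gives the desired character identity, hence the claimed decomposition.

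The step I expect to be the main obstacle is showing the assignment above is genuinely a bijection: that an arbitrary tuple $(U^{(1)},\dots,U^{(b_L)})$ of skew semistandard tableaux of the prescribed shapes reassembles into an honest degree $r$ standard monomial on $X(w)$ with head $\underline{\theta}$. The key points are that the ``block skeleton'' of $\tab_{\underline{\theta}}$ -- which of the $d$ slots in each column belongs to which block -- is compatible with semistandardness: along any row the block index weakly increases from left to right, which follows from $\theta_1\geq\cdots\geq\theta_r$; consequently stacking the $U^{(k)}$ into this skeleton yields a tableau of shape $(r^d)$ with strictly increasing columns (the blocks of values are disjoint and increasing, and each $U^{(k)}$ is column-strict) and weakly increasing rows (from the rows of the $U^{(k)}$ together with skeleton compatibility); and the largest column $\tau_1$ of the reassembly obeys $\tau_1\leq\theta_1\leq w$, so the monomial really lies in $\mathbb{C}[X(w)]_r$. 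It is also worth flagging that the head classes are \emph{not} $L$-submodules of $\mathbb{C}[X(w)]_r$ -- the induced $L$-action on Pl\"{u}cker monomials mixes classes once one straightens non-standard products -- so the passage through characters, legitimate only because of complete reducibility, is doing real work; realizing the summands $\mathbb{W}^*_{\underline{\theta}}$ as honest submodules instead calls for an $L$-stable filtration of $\mathbb{C}[X(w)]_r$, for instance by the ideals of the $L$-stable Schubert subvarieties $X(\theta)$ with $\theta\in\He_{L,1}$, whose successive subquotients are the $\mathbb{W}^*_{\underline{\theta}}$; this, or an equivalent device, presumably underlies the two-step argument of \cite{HodgesLakshmibai}.
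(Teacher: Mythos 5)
This statement is imported verbatim from \cite{HodgesLakshmibai} (Theorem~3.5.4) and the present paper offers no proof of it, so there is no in-paper argument to compare against; judged on its own terms, your proposal is essentially correct and would serve as a self-contained proof. Your route --- reduce to an identity of $T$-characters via complete reducibility, partition the standard monomial basis of Theorem~\ref{T:FundamentalSMT} by block top-justification $\tau_i\mapsto\theta_i$, and match each class bijectively with tuples of skew semistandard tableaux whose generating functions are the characters of the $\mathbb{W}^{\lambda_{\underline{\theta}}^{(k)}/\mu_{\underline{\theta}}^{(k)}}(\mathbb{C}^{N_k})$ --- is sound: monotonicity of top-justification for the Bruhat order follows from comparing the partial block counts $m_1+\cdots+m_k$, membership of each $\theta_i$ in $\He_{L,1}$ follows from Proposition~\ref{p:HeadLComb} together with $w$ being top-justified, and your reassembly argument for surjectivity is the right check. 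This differs from the strategy the authors describe for the cited proof, which is a two-step decomposition producing the $\mathbb{W}^*_{\underline{\theta}}$ as actual $L$-module summands (or subquotients of an $L$-stable filtration) of $\mathbb{C}[X(w)]_r$ rather than only identifying the isomorphism type; your own closing caveat, that the head classes are not $L$-submodules and that the character argument therefore leans on complete reducibility, correctly identifies the trade-off. For everything this paper needs (multiplicity-freeness of the decomposition), the character-level statement is enough, so nothing is lost; the structural version buys explicit submodules, which the character argument cannot provide. The only blemishes are cosmetic: the displayed formula should read $\mathbb{C}[X(w)]_r$ on the left (a typo already present in the paper's statement), and one should say a word about why the block-$k$ restriction of the column tableau is itself of skew shape (the partial counts $m_1+\cdots+m_k$ of the columns are weakly decreasing left to right, which is exactly the required nesting of column intervals).
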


As $\mathbb{W}_{\underline{\theta}}$ is a tensor product of skew Weyl modules, the decomposition of $\mathbb{C}[X(w)]_r$ into irreducible $L$-modules may then be achieved via \eqref{e:SkewWeylDecomp}.

\begin{example}
\label{e:blocks}
Set $d=3$ and $N=9$ and consider $w=(2,7,9) \in W^{P_d}$. The Schubert variety $X(w)$ is $L=L_I$-stable for $I=\{\alpha_1,\alpha_3,\alpha_4,\alpha_5,\alpha_6,\alpha_8 \}$. Then $\Delta \setminus I = \{ \alpha_2, \alpha_7 \}$ and we have that $L=\GL_2 \times \GL_5 \times \GL_2$. Then $b_L=3$ and the blocks of $L$ are
\begin{center}
$\Bl_{L, 1} = \{ 1, 2 \} \qquad \Bl_{L, 2} = \{ 3, 4, 5, 6, 7 \} \qquad \Bl_{L, 3} = \{ 8, 9 \}.$
\end{center}
The degree one heads of type $L$ are $(1, 2, 7)$, $(2,6,7)$, and $(2, 7, 9)$. One standard degree three head is $\underline{\theta} = ((2,7,9) , (2,6,7) , (1,2,7))$. We will now construct the skew semistandard tableaux and skew diagrams associated to $\underline{\theta}$. We have
\begin{center}
$\tab_{\underline{\theta}} = \begin{ytableau}
1 & 2 & 2 \\
2 & 6 & 7 \\
7 & 7 & 9 \\
\end{ytableau}$.
\end{center}
We then create a basic skew semistandard tableau $\tab_{\underline{\theta}}^{(k)}$ for each $1 \leq k \leq b_L$ by removing boxes not in $\Bl_{L,k}$ and deleting empty rows and columns.
\begin{center}
$\tab_{\underline{\theta}}^{(1)} = \begin{ytableau}
1 & 2 & 2 \\
2 \\
\end{ytableau} \qquad \qquad \tab_{\underline{\theta}}^{(2)} = \begin{ytableau}
\none & 6 & 7 \\
7 & 7 \\
\end{ytableau} \qquad \qquad \tab_{\underline{\theta}}^{(3)} = \begin{ytableau}
9 \\
\end{ytableau}$
\end{center}
\end{example}
The associated skew diagrams are $\lambda_{\underline{\theta}}^{(1)} / \mu_{\underline{\theta}}^{(1)} = (3,1)/\emptyset$, $\lambda_{\underline{\theta}}^{(2)} / \mu_{\underline{\theta}}^{(2)} = (3,2)/(1)$, and $\lambda_{\underline{\theta}}^{(3)} / \mu_{\underline{\theta}}^{(3)}=(1)/\emptyset$. This implies that the $L$-module associated to the degree 3 head $\underline{\theta}$ is
\begin{center}
$\mathbb{W}_{\underline{\theta}} := \mathbb{W}^{(3,1)/\emptyset}(\mathbb{C}^{2}) \otimes \mathbb{W}^{(3,2)/(1)}(\mathbb{C}^{5}) \otimes \mathbb{W}^{(1)/\emptyset}(\mathbb{C}^{2})$
\end{center}
\section{Reductions and Multiplicity Criterion}
\label{sec:multFreeClass}

It will be easier to state our classification result if we first perform some reductions. We define a \emph{Levi-Schubert quadruple} to be the datum $(w, d, N, L)$ where $d < N$ are positive integers, $w=(\ell_1,\cdots,\ell_d) \in W^{P_d}$, and $L$ is a Levi subgroup of $\GL_N$. A Levi-Schubert quadruple is \emph{stable} if $X(w)$ is $L$-stable for the action of $L$ by left multiplication. A stable Levi-Schubert quadruple is \emph{multiplicity free} if the decomposition of $\mathbb{C}[X(w)]$ into irreducible $L$-modules is multiplicity free. A stable Levi-Schubert quadruple is \emph{spherical} if $X(w)$ is a spherical $L$-variety. We say $(w, d, N, L)$ is \emph{reduced} if $\ell_1 \neq 1$ and $\ell_d=N$. If $(w, d, N, L)$ is not reduced we define its \emph{reduction} $(\overline{w}, \overline{d}, \overline{N}, \overline{L})$ as follows. The fact that the quadruple is not reduced implies that $w = (1,\ldots,p,\ell_{p+1},\ldots,\ell_d)$ for some $p \geq 0$ or $\ell_d\neq N$ with $\ell_{p+1} \neq p+1$. 

Note that throughout the paper we will assume that $w$ is not the identity; in the case when $w$ is the identity, we have that $X(w)$ is a point space and hence if it is $L$-stable for some Levi subgroup $L$, then it is trivially spherical. Set 
\begin{center}
$\overline{w} = (\ell_{p+1}-p,\ldots,\ell_d-p)$ \\
$\overline{d} = d-p$ \\
$\overline{N} = \ell_d - p$.
\end{center}
Finally, we define $\overline{L}$ to be the image of $L$ under the map $\mathrm{pr}_w$ which is the composition of a diagonal projection map and a projection map
\begin{center}
$\mathrm{pr}_w:\GL_N \longrightarrow \GL_p \times \GL_{\ell_d - p} \times \GL_{N - \ell_d} \longrightarrow \GL_{\ell_d - p}$.
\end{center}

\begin{lemma}
\label{l:reductioniso}
If $(\overline{w}, \overline{d}, \overline{N}, \overline{L})$ is the reduction of the Levi-Schubert quadruple $(w, d, N, L)$, then $X(w) \cong X(\overline{w})$ as varieties and $\mathbb{C}[X(w)] \cong \mathbb{C}[X(\overline{w})]$ as graded $\mathbb{C}$-algebras.
\end{lemma}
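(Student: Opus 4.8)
The plan is to realise $X(w)$ inside a sub-Grassmannian of $\Grass_{d,N}$ that is itself a copy of $\Grass_{\overline{d},\overline{N}}$, arranged so that $X(w)$ corresponds to $X(\overline{w})$, and then transport the Pl\"ucker coordinate ring along a linear identification of ambient projective spaces. First I would record the shape of $w$: as the quadruple is not reduced and $w$ is not the identity, there is a largest $p\ge 0$ with $\ell_i=i$ for $i\le p$, so that $w=(1,\dots,p,\ell_{p+1},\dots,\ell_d)$, and then $\overline{w}=(\ell_{p+1}-p,\dots,\ell_d-p)$, $\overline{d}=d-p$, $\overline{N}=\ell_d-p$. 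Put $F_j=\spann{e_1,\dots,e_j}$. Since $[e_w]$ satisfies $F_p\subseteq\spann{e_{\ell_1},\dots,e_{\ell_d}}\subseteq F_{\ell_d}$ and the closed locus $Y:=\{[U]\in\Grass_{d,N}\mid F_p\subseteq U\subseteq F_{\ell_d}\}$ is stable under the standard Borel $B$ (which fixes each $F_j$), we get $X(w)=\overline{B[e_w]}\subseteq Y$; equivalently, the componentwise description of Bruhat order from Section~\ref{subsec:SMT} shows every $\tau=(i_1,\dots,i_d)\le w$ has $i_j=j$ for $j\le p$ and $i_d\le\ell_d$.

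Next I would set up the identification. Identifying $F_{\ell_d}/F_p$ with $\mathbb{C}^{\overline N}$ by $e_{p+j}+F_p\mapsto e_j$, the assignment $[U]\mapsto[U/F_p]$ is an isomorphism $Y\xrightarrow{\ \sim\ }\Grass_{\overline{d},\overline{N}}$. Wedging with the fixed decomposable vector $e_1\wedge\cdots\wedge e_p$ gives an injective linear map $\bigwedge^{\overline d}\mathbb{C}^{\overline N}\hookrightarrow\bigwedge^{d}\mathbb{C}^{N}$, $e_\sigma\mapsto e_{(1,\dots,p)\cup(\sigma+p)}$ (no signs arise, the target index being already increasing); a short computation with a basis of $U$ containing $e_1,\dots,e_p$ shows the induced linear embedding of projective spaces carries the Pl\"ucker embedding of $\Grass_{\overline d,\overline N}$ isomorphically onto $Y$, and that, writing $\overline{\tau}:=(i_{p+1}-p,\dots,i_d-p)$ for $\tau=(1,\dots,p,i_{p+1},\dots,i_d)$ with $i_d\le\ell_d$, the Pl\"ucker coordinate $\pl_\tau$ of $\Grass_{d,N}$ restricts on $Y$ to the pullback of $\overline{\pl}_{\overline\tau}$, while $\pl_\tau|_Y=0$ for every other $\tau$. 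Now $\tau\mapsto\overline{\tau}$ is an isomorphism of posets from $\{\tau\in W^{P_d}\mid\tau\le w\}$ onto $\{\sigma\in W^{P_{\overline d}}\mid\sigma\le\overline{w}\}$ (the Bruhat orders being componentwise and the first $p$ coordinates frozen), and the identification $Y\cong\Grass_{\overline d,\overline N}$ is equivariant for $B$ acting through its surjective image in the standard Borel of $\GL_{\overline N}$ and sends $[e_w]$ to $[e_{\overline w}]$; hence it carries $X(w)=\overline{B[e_w]}$ onto $X(\overline w)=\overline{B[e_{\overline w}]}$. This proves $X(w)\cong X(\overline{w})$ as varieties.

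For the coordinate rings: the isomorphism $X(w)\xrightarrow{\ \sim\ }X(\overline w)$ just produced is the restriction of the linear projection $\mathbb{P}(\bigwedge^d\mathbb{C}^N)\dashrightarrow\mathbb{P}(\bigwedge^{\overline d}\mathbb{C}^{\overline N})$ onto the coordinates $\pl_\tau$ with $\tau=(1,\dots,p,i_{p+1},\dots,i_d)$, $i_d\le\ell_d$, whose indeterminacy locus is the complementary coordinate subspace; this subspace is disjoint from $X(w)$, since otherwise $X(w)$, being $T$-stable, would meet it in a $T$-fixed point $[e_\tau]$ with $\tau\le w$, whereas every such $\tau$ has the displayed form and hence $\pl_\tau([e_\tau])\ne0$. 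A linear projection pulls $\mathcal{O}(1)$ back to $\mathcal{O}(1)$, and Schubert varieties in the Grassmannian are projectively normal in the Pl\"ucker embedding, so $\mathbb{C}[X(w)]=\bigoplus_{r\ge0}\mathrm{H}^0(X(w),\mathcal{O}(r))$ and likewise for $X(\overline w)$; the variety isomorphism therefore induces an isomorphism of graded $\mathbb{C}$-algebras $\mathbb{C}[X(w)]\cong\mathbb{C}[X(\overline w)]$, which in degree one is $\pl_\tau\mapsto\overline{\pl}_{\overline\tau}$ for $\tau\le w$ and $\pl_\tau\mapsto0$ for $\tau\not\le w$ (already $0$ in $\mathbb{C}[X(w)]$). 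Alternatively one could define this same map on the standard-monomial basis of Theorem~\ref{T:FundamentalSMT}, where the poset isomorphism makes it a graded linear bijection, and check multiplicativity directly. I expect that multiplicativity is the only genuinely non-formal point — that discarding the ``frozen'' coordinates ($\tau$ not containing $\{1,\dots,p\}$) and the ``oversized'' ones ($i_d>\ell_d$) respects the ring structure, i.e.\ that the Pl\"ucker (straightening) relations of $\Grass_{d,N}$ become, signs included, those of $\Grass_{\overline d,\overline N}$; the geometric route above avoids this, since once the variety isomorphism is linear on Pl\"ucker space, preservation of $\mathcal{O}(1)$, and hence of the whole graded coordinate ring, is automatic.
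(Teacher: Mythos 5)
Your proof is correct, and the underlying identification is the same one the paper uses: the linear map $\bigwedge^{\overline d}\mathbb{C}^{\overline N}\hookrightarrow\bigwedge^{d}\mathbb{C}^{N}$ given by wedging with $e_1\wedge\cdots\wedge e_p$ is exactly the bottom row of the paper's commutative diagram, and your bijection $\tau\mapsto\overline\tau$ is the paper's poset isomorphism $\iota:H_w\to H_{\overline w}$. The difference is the direction of deduction. The paper works algebra-first: it cites the scheme-theoretic description of $X(w)$ as the vanishing locus of $\{\pl_\tau\mid\tau\nleq w\}$ in $\Grass_{d,N}$, asserts that the poset isomorphism then yields $\mathbb{C}[X(w)]\cong\mathbb{C}[X(\overline w)]$, and deduces the variety isomorphism from that. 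You work geometry-first: you identify the sub-Grassmannian $Y=\{F_p\subseteq U\subseteq F_{\ell_d}\}$ with $\Grass_{\overline d,\overline N}$, use $B$-equivariance to carry $X(w)$ onto $X(\overline w)$, and then recover the graded ring isomorphism from the fact that the identification is linear on Pl\"ucker space (so it matches the $\mathcal{O}(1)$'s), invoking projective normality. This buys you something real: the step you flag as ``genuinely non-formal'' --- that the Pl\"ucker relations of the two Grassmannians correspond under $\pl_\tau\mapsto\overline\pl_{\overline\tau}$ modulo the vanishing coordinates --- is precisely what the paper's phrase ``the above isomorphism of posets implies $\mathbb{C}[X(w)]\cong\mathbb{C}[X(\overline w)]$'' leaves implicit, and your linear-embedding argument disposes of it cleanly. (In fact you do not even need projective normality for that last step: a projective variety contained in a linear subspace of $\mathbb{P}^n$ has the same homogeneous coordinate ring whether computed in the subspace or in $\mathbb{P}^n$, since the restriction of coordinate rings is surjective with kernel contained in the ideal of the variety.)
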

\begin{proof}
Consider the commutative diagram below.

\[
\begin{tikzcd}
X(\overline{w}) \arrow{r}{} \arrow[hook]{d}{} & X(w) \arrow{r}{} \arrow[hook]{d}{} & X(w) \arrow[hook]{d}{} \\
\Grass_{\overline{d}, \overline{N}} \arrow[hook]{r}{} \arrow[hook]{d}{A} & \Grass_{d, \overline{N}} \arrow[hook]{r}{} \arrow[hook]{d}{B} & \Grass_{d, N} \arrow[hook]{d}{C}\\
\mathbb{P}(\bigwedge^{\overline{d}}\mathbb{C}^{\overline{N}}) \arrow[hook]{r}{} & \mathbb{P}(\bigwedge^{d}\mathbb{C}^{\overline{N}}) \arrow[hook]{r}{} & \mathbb{P}(\bigwedge^{d}\mathbb{C}^{N})
\end{tikzcd}
\]

Here $A$, $B$, and $C$ are the Pl\"{u}cker embeddings discussed in Section \ref{subsec:SMT} and we wish to show that the top arrows are isomorphisms. Recall that we defined the set $H_w := \{ \tau \in W^{P_d} | \tau \leq w \}$; every element in $H_w$ is of the form $(1,...,p,t_{p+1},...,t_{d})$ with $t_{d}\leq \ell_{d}$. We next define 

\begin{equation}
\label{e:HasseIso}
\begin{array}{c}
\iota:H_w \longrightarrow H_{\overline{w}} \\
(1,...,p,t_{p+1},...,t_{d}) \longmapsto (t_{p+1}-p,...,t_{d}-p)
\end{array}
\end{equation}

Note that this map is a bijection with $\iota(w)=\overline{w}$, and for $\tau$, $\gamma \in H_w$ we have $\tau \leq \gamma$ if and only if $\iota(\tau) \leq \iota(\gamma)$. Thus the poset $(H_w, \leq)$ is isomorphic to the poset $(H_{\overline{w}},\leq)$. It is well known that the Schubert variety $X(w)$ is cut out scheme theoretically from $\Grass_{d,N}$ by the equations $\{\pl_{\tau}=0\mid\tau \nleq w\}$ and similarly $X(\overline{w})$ from $\Grass_{\overline{d}, \overline{N}}$ by $\{\pl_{\overline{\tau}}=0\mid \overline{\tau} \nleq \overline{w}\}$ (see for example~\cite[Chapter 5]{MR3408060}). Hence the above isomorphism of posets implies $\mathbb{C}[X(w)] \cong \mathbb{C}[X(\overline{w})]$ as $\mathbb{C}$-algebras, which further implies that $X(w) \cong X(\overline{w})$ as varieties.
\end{proof}

\begin{lemma}
\label{l:reductionstable}
If the Levi-Schubert quadruple $(w, d, N, L)$ is stable then its reduction $(\overline{w}, \overline{d}, \overline{N}, \overline{L})$ is stable. Further, if $w = (1,\ldots,p,\ell_{p+1},\ldots,\ell_d)$ then
\begin{center}
$L = \GL_{a_1} \times \cdots \times \GL_{a_r} \times \overline{L} \times \GL_{c_1} \times \cdots \times \GL_{c_t}$
\end{center}
for some positive integers $a_1 + \cdots + a_r = p$ and $c_1 + \cdots + c_r = N - \ell_d$.
\end{lemma}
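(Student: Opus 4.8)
The plan is to reduce everything to the combinatorial description of $L$-stability given in Proposition~\ref{p:HeadLComb}. Since $(w,d,N,L)$ is stable, $X(w)$ is an $L$-stable Schubert subvariety of itself, so $w$ is itself a degree $1$ head of type $L$, and Proposition~\ref{p:HeadLComb} then says that for every $k$ the set $w \cap \Bl_{L,k}$ consists of the topmost entries of the block $\Bl_{L,k}$. Writing the block boundaries of $L$ as $0 = j_0 < j_1 < \cdots < j_{b_L} = N$, so that $\Bl_{L,k} = \{ j_{k-1}+1, \ldots, j_k \}$, the crucial first step is to show that both $p$ and $\ell_d$ occur among the $j_k$.

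For $\ell_d$: if $\ell_d$ lies in block $k'$, then $\ell_d \in w \cap \Bl_{L,k'}$, so that set is nonempty and by Proposition~\ref{p:HeadLComb} its topmost entries, in particular $j_{k'}$ itself, lie in $w$; but every entry of $w$ is at most $\ell_d$, so $j_{k'} = \ell_d$. For $p$ I would argue similarly (the case $p = 0$ being trivial since then $p = j_0$): if $p$ lies in block $k$, then $p \in w \cap \Bl_{L,k}$ forces the topmost entries of block $k$ to form the interval $\{ p, p+1, \ldots, j_k \} \subseteq w$, whereas the entries of $w$ are $1, \ldots, p, \ell_{p+1}, \ldots, \ell_d$ with $\ell_{p+1} \geq p+2$ by maximality of $p$, so $p+1 \notin w$; this is consistent only if $j_k = p$. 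Let $r$ and $r'$ be the indices with $j_r = p$ and $j_{r'} = \ell_d$ (so $r = 0$ when $p = 0$); since $w$ is not the identity we have $p < \ell_d$, hence $r < r'$.

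Next I would record that the block partition of $L$ therefore refines $\{1,\ldots,p\} \sqcup \{p+1,\ldots,\ell_d\} \sqcup \{\ell_d+1,\ldots,N\}$: the blocks $\Bl_{L,1},\ldots,\Bl_{L,r}$ partition $\{1,\ldots,p\}$, the blocks $\Bl_{L,r+1},\ldots,\Bl_{L,r'}$ partition $\{p+1,\ldots,\ell_d\}$, and $\Bl_{L,r'+1},\ldots,\Bl_{L,b_L}$ partition $\{\ell_d+1,\ldots,N\}$. Putting $a_i = N_i$ for $1 \leq i \leq r$ and $c_i = N_{r'+i}$ for $1 \leq i \leq t := b_L - r'$ gives the stated decomposition of $L$, with $a_1 + \cdots + a_r = p$ and $c_1 + \cdots + c_t = N - \ell_d$. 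Since $L$ is block-diagonal with respect to the coarser partition $\{1,\ldots,p\} \sqcup \{p+1,\ldots,\ell_d\} \sqcup \{\ell_d+1,\ldots,N\}$, the map $\mathrm{pr}_w$ restricts to a genuine homomorphism on $L$ picking out the middle block, so $\overline{L} = \mathrm{pr}_w(L) = \GL_{N_{r+1}} \times \cdots \times \GL_{N_{r'}}$ is exactly the middle tensor factor, a Levi subgroup of $\GL_{\ell_d - p} = \GL_{\overline{N}}$.

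Finally I would verify stability of the reduction. The blocks of $\overline{L}$ inside $\{1,\ldots,\overline{N}\}$ are $\Bl_{\overline{L},k} = \Bl_{L,r+k} - p$, and $\overline{w}$, viewed as a subset of $\{1,\ldots,\overline{N}\}$, equals $(w \cap \{p+1,\ldots,\ell_d\}) - p$ because the entries of $w$ exceeding $p$ are exactly $\ell_{p+1},\ldots,\ell_d$; hence $\overline{w} \cap \Bl_{\overline{L},k} = (w \cap \Bl_{L,r+k}) - p$. As $w \cap \Bl_{L,r+k}$ is the top of $\Bl_{L,r+k}$, translating by $-p$ shows $\overline{w} \cap \Bl_{\overline{L},k}$ is the top of $\Bl_{\overline{L},k}$, so by Proposition~\ref{p:HeadLComb} again $\overline{w}$ is a degree $1$ head of type $\overline{L}$, i.e. $X(\overline{w})$ is $\overline{L}$-stable. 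I expect the only genuine obstacle to be the first step, forcing $p$ and $\ell_d$ to be block boundaries of $L$; the essential point is that Proposition~\ref{p:HeadLComb} does not allow any block of $L$ to contain a gap of $w$, which clashes both with $p+1 \notin w$ and with the possibility $j_{k'} > \ell_d$. Everything after that is bookkeeping.
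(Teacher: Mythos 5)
Your argument is correct, and it establishes the same three facts as the paper's proof ($p$ and $\ell_d$ are breakpoints of $L$, hence the displayed product decomposition of $L$; and $X(\overline{w})$ is $\overline{L}$-stable), but it gets there through a different key lemma. The paper works at the level of parabolic subgroups: it quotes the description \eqref{e:formLw} of $\Delta \setminus I_w$ for the maximal stabilizing parabolic $Q_w = P_{I_w}$ (from the companion paper) to conclude $\alpha_p, \alpha_{\ell_d} \in \Delta \setminus I_w \subseteq \Delta \setminus I$, and then verifies $\Delta \setminus I_{\overline{w}} \subseteq \Delta \setminus \overline{I}$ using the explicit form \eqref{e:formIw} of $\mathrm{pr}_w$. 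You instead apply Proposition~\ref{p:HeadLComb} to $\theta = w$ itself: block-maximality of $w \cap \Bl_{L,k}$ forces $j_{k'} = \ell_d$ and $j_k = p$ (the clash with $p+1 \notin w$ is exactly the right point), and stability of the reduction follows by translating the block-maximality condition by $-p$. The two routes carry the same combinatorial content---\eqref{e:formLw} and Proposition~\ref{p:HeadLComb} are two encodings of which Levi subgroups stabilize $X(w)$---but yours stays entirely in the block/head language used throughout Section~\ref{sec:class}, which is a reasonable trade. One small reading convention to be aware of: Proposition~\ref{p:HeadLComb} is stated for $\theta \in H_w$ relative to an ambient $L$-stable $X(w)$, so to invoke its ``if'' direction for $\overline{w}$ you should take the ambient variety to be all of $\Grass_{\overline{d},\overline{N}}$ (which is $\overline{L}$-stable for any Levi); with that understood, your application is legitimate and there is no gap.
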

\begin{proof}
As discussed above, if $(w, d, N, L)$ is not reduced then $w = (1,\ldots,p,\ell_{p+1},\ldots,\ell_d)$ for some $p \geq 0$ or $\ell_d\neq N$ with $\ell_{p+1} \neq p+1$. We will prove the result for $p>{}0$ and $\ell_d\neq N$ as the cases where $p=0$ or $\ell_d= N$ are simpler versions of this general case.
Let $Q_w$ and $Q_{\overline{w}}$ be the largest standard parabolic subgroups that act on $X(w)$ and $X(\overline{w})$ respectively. Then $Q_w = P_{I_w}$ for some $I_w \subseteq \Delta$. If $\Delta \setminus I_w = \{ \alpha_{j_1},\ldots,\alpha_{j_m} \}$ with $j_1 < \cdots < j_m$, then by \cite[Proposition~3.1.1]{HodgesLakshmibai}, we have
\begin{equation}
\label{e:formLw}
\Delta \setminus I_w = \{ \alpha_b \mid \exists m\text{ with } b=\ell_m\text{ and }\ell_m + 1\neq \ell_{m+1} \}.
\end{equation}
This implies that $\alpha_p$ and $\alpha_{\ell_d}$ are elements of $\Delta \setminus I_w$, in particular, $\Delta \setminus I_w = \{ \alpha_{p},\alpha_{j_2},\ldots,\alpha_{j_{m-1}},\alpha_{\ell_d} \}$. Thus, using \eqref{e:HasseIso} and \eqref{e:formLw}, we have $Q_{\overline{w}}=P_{I_{\overline{w}}}$ with $\Delta \setminus I_{\overline{w}} = \{ \alpha_{{j_2} - p},\ldots,\alpha_{{j_{m-1}}-p} \}$.

Let $Q=P_I$ and $\overline{Q}=P_{\overline{I}}$ be the parabolic subgroups with Levi parts $L$ and $\overline{L}$ respectively. Since $X(w)$ is $L$-stable it is $Q$-stable and hence $\Delta \setminus I_w \subseteq \Delta \setminus I$. We will show that $\Delta \setminus I_{\overline{w}} \subseteq \Delta \setminus \overline{I}$ which will imply that $X(\overline{w})$ is $\overline{L}$-stable. The form of $\mathrm{pr}_w$ implies that 
\begin{equation}
\label{e:formIw}
\Delta \setminus \overline{I}=\{ \alpha_{b-p} | \alpha_b \in I\;\mathrm{and}\;p<b<N-\ell_d \}.
\end{equation}
Now if we take a $\alpha_{j_b-p} \in \Delta \setminus I_{\overline{w}}$, we have that $\alpha_{j_b} \in \Delta \setminus I_w \subset \Delta \setminus I$. Thus \eqref{e:formIw} implies that $\alpha_{j_b-p} \in \Delta \setminus \overline{I}$.

The fact that $\Delta \setminus I_w \subseteq \Delta \setminus I$ implies that $\alpha_p$ and $\alpha_{\ell_d}$ are elements of $\Delta \setminus I$. Thus
\begin{center}
$L = \GL_{a_1} \times \cdots \times \GL_{a_r} \times \GL_{b_1} \times \cdots \times \GL_{b_s} \times \GL_{c_1} \times \cdots \times \GL_{c_t}$
\end{center}
for some positive integers with $a_1 + \cdots + a_r = p$ and $c_1 + \cdots + c_r = N - \ell_d$. Then $\overline{L}:=\mathrm{pr}_w(L)=\GL_{b_1} \times \cdots \GL_{b_s}$, which completes the proof.
\end{proof}

\begin{proposition}
\label{l:reductionSpherical}
Let $(w, d, N, L)$ be a stable Levi-Schubert quadruple. Then $(w, d, N, L)$ is spherical if and only if its reduction $(\overline{w}, \overline{d}, \overline{N}, \overline{L})$ is spherical.
\end{proposition}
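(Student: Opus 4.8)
The plan is to reduce the sphericity question to the multiplicity-freeness of the graded pieces of the homogeneous coordinate ring, and then to transport that property along the isomorphism already built in Lemma~\ref{l:reductioniso}. By Theorem~\ref{t:perrinSpherical}(iii), $X(w)$ is spherical for the $L$-action if and only if $\mathbb{C}[X(w)]^{B_L} = \mathbb{C}$ for a Borel subgroup $B_L$ of $L$; equivalently, by Theorem~\ref{t:perrinSpherical}(iv) applied to the Pl\"ucker line bundle and its powers (which give exactly the graded pieces $\mathbb{C}[X(w)]_r$), $X(w)$ is spherical if and only if each $\mathbb{C}[X(w)]_r$ is a multiplicity-free $L$-module. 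So it suffices to show that $\mathbb{C}[X(w)]_r$ is multiplicity free as an $L$-module if and only if $\mathbb{C}[X(\overline{w})]_r$ is multiplicity free as an $\overline{L}$-module, for every $r$.

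The key step is to identify the $L$-module structure on $\mathbb{C}[X(w)]$ with an $\overline{L}$-module structure on $\mathbb{C}[X(\overline{w})]$ in a way compatible with the graded algebra isomorphism $\varphi\colon \mathbb{C}[X(w)] \xrightarrow{\sim} \mathbb{C}[X(\overline{w})]$ of Lemma~\ref{l:reductioniso}. By Lemma~\ref{l:reductionstable} we may write $L = \GL_{a_1}\times\cdots\times\GL_{a_r}\times\overline{L}\times\GL_{c_1}\times\cdots\times\GL_{c_t}$, where the first block of factors acts on the coordinates $1,\dots,p$ and the last block on the coordinates $\ell_d+1,\dots,N$. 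I would argue that the first and last groups of factors act trivially on $\mathbb{C}[X(w)]$: every Pl\"ucker coordinate $\pl_\tau$ with $\tau \nleq w$ vanishes on $X(w)$, and for $\tau \leq w$ we have $\tau = (1,\dots,p,t_{p+1},\dots,t_d)$ with $t_d \leq \ell_d$, so $e_\tau = e_1\wedge\cdots\wedge e_p\wedge e_{t_{p+1}}\wedge\cdots\wedge e_{t_d}$ involves none of the basis vectors $e_{\ell_d+1},\dots,e_N$ (killed by the $\GL_{c_i}$ factors, which act trivially on $e_1,\dots,e_{\ell_d}$) and always involves the full wedge $e_1\wedge\cdots\wedge e_p$, on which each $\GL_{a_i}$ acts by its determinant character. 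Thus on $\mathbb{C}[X(w)]_r$ the factors $\GL_{a_1},\dots,\GL_{a_r}$ act by a single common scalar character (the product of the $r$-th powers of determinants) and the factors $\GL_{c_1},\dots,\GL_{c_t}$ act trivially. Tensoring by a one-dimensional character does not change multiplicities, so $\mathbb{C}[X(w)]_r$ is multiplicity free as an $L$-module if and only if it is multiplicity free as an $\overline{L}$-module. Concretely, this can be checked directly against Theorem~\ref{t:LeviSchubertMainDecomp}: under the bijection $\iota$ of \eqref{e:HasseIso}, a standard degree $r$ head $\underline{\theta}$ of type $L$ for $w$ corresponds to a standard degree $r$ head $\iota(\underline\theta)$ of type $\overline{L}$ for $\overline{w}$ (the blocks $\Bl_{L,k}$ for the middle factors match the blocks of $\overline{L}$ after subtracting $p$, the outer blocks are forced), the skew diagrams $\lambda_{\underline{\theta}}^{(k)}/\mu_{\underline{\theta}}^{(k)}$ attached to middle blocks are unchanged while those for outer blocks are empty or a single column of height $p$, and hence $\mathbb{W}_{\underline{\theta}}$ equals $\mathbb{W}_{\iota(\underline{\theta})}$ tensored with a one-dimensional character of $L$.

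Finally, I would combine these observations. The graded isomorphism $\varphi$ of Lemma~\ref{l:reductioniso} intertwines the $\overline{L}$-action on $\mathbb{C}[X(w)]$ (obtained by restricting the $L$-action, which by the above factors through $\overline{L}$ up to a character) with the $\overline{L}$-action on $\mathbb{C}[X(\overline{w})]$, because $\varphi$ is induced by the poset isomorphism $\iota$ which matches Pl\"ucker coordinates in a way equivariant for the middle factors; one sees this on generators by tracking $\pl_\tau \mapsto \pl_{\iota(\tau)}$ and checking that the middle $\GL_{N_k}$-factors act the same way on $e_\tau$ and $e_{\iota(\tau)}$. Therefore $\mathbb{C}[X(w)]_r$ is multiplicity free over $L$ $\iff$ it is multiplicity free over $\overline{L}$ $\iff$ $\mathbb{C}[X(\overline{w})]_r$ is multiplicity free over $\overline{L}$, for all $r$, and the equivalence of sphericity via Theorem~\ref{t:perrinSpherical}(iv) completes the proof. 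The main obstacle is the bookkeeping in the second step: one must verify carefully, either representation-theoretically (the outer factors act by a scalar character) or combinatorially via Theorem~\ref{t:LeviSchubertMainDecomp} and the bijection $\iota$, that passing to the reduction neither creates nor destroys multiplicities, and in particular that the one-dimensional twist by the determinant characters of the $\GL_{a_i}$ is genuinely uniform across each graded piece so that it cannot merge distinct isotypic components.
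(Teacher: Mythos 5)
Your proof is correct, but it takes a genuinely different route from the paper. The paper argues geometrically: it builds an explicit injection $i\colon \Grass_{\overline{d},\overline{N}}\hookrightarrow \Grass_{d,N}$ sending $\overline{g}P_{\overline{d}}$ to the block matrix $\mathrm{diag}(\mathrm{I}_p,\overline{g},\mathrm{I}_{N-\ell_d})$ modulo $P_d$, lets $L$ act on the source through $\mathrm{pr}_w$, and checks that $i$ is $L$-equivariant because the correcting block-diagonal element $\mathrm{diag}(\jmath_1^{-1},\mathrm{I}_{\overline{N}},\jmath_3^{-1})$ lies in $P_d$ (here $p<d$ and $N-\ell_d\leq N-d$ are needed). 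This yields $X(\overline{w})\cong X(w)$ as $L$-varieties, and sphericity for $L$ equals sphericity for $\overline{L}$ since the $L$-action on $X(\overline{w})$ factors through the surjection $\mathrm{pr}_w\colon L\to\overline{L}$. The advantage is that no representation theory is needed and the determinant-character issue you have to track never arises: the outer blocks are absorbed into $P_d$. Your argument instead works on the coordinate ring via Theorem~\ref{t:perrinSpherical}(iv), showing the $\GL_{c_j}$ factors act trivially and the $\GL_{a_i}$ factors act by a power of the determinant uniformly on each graded piece, so that $\mathbb{C}[X(w)]_r$ and $\mathbb{C}[X(\overline{w})]_r$ differ by a one-dimensional twist; your observation that the twist is uniform in each degree (so it cannot merge or split isotypic components) is exactly the point that makes this work. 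What your route buys is that it simultaneously establishes Corollary~\ref{c:mainSphericalReduction} (invariance of multiplicity-freeness under reduction), which in the paper is deduced only after Proposition~\ref{p:multFreeSphericalEquiv}. Two small points you should make explicit: you are using that every $L$-linearized line bundle on $X(w)$ is a restriction of a power of the Pl\"{u}cker bundle (this is how condition (iv) reduces to the graded pieces, and is justified in the paper's proof of Proposition~\ref{p:multFreeSphericalEquiv}), and that the span of $\{e_\tau : \tau\leq w\}$ is $L$-stable so that the degree-one piece really is the $L$-module dual of that span.
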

\begin{proof}
An element in $\Grass_{\overline{d},\overline{N}}$ is of the form $\overline{g} P_{\overline{d}}$ for some $\overline{g} \in \GL_{\overline{N}}$. The injective map $i: \Grass_{\overline{d},\overline{N}} \hookrightarrow \Grass_{d,N}$ from Lemma \ref{l:reductioniso} takes the element $\overline{g} P_{\overline{d}}$ to $g P_{d}$ where $g \in \GL_N$ is a block diagonal matrix of the form
\begin{center}
$\begin{bmatrix}
\;\mathrm{I}_p & & \\
& \overline{g} & \\
& & \mathrm{I}_{N-\ell_d}
\end{bmatrix}$
\end{center}
and $\mathrm{I}_n$ denotes the identity matrix of size $n \times n$. Define the action of an element $\jmath \in L$ on $\Grass_{\overline{d},\overline{N}}$ by left multiplication by $\mathrm{pr}_w(\jmath)$. We claim that $i$ is $L$-equivariant for this action. Recalling the form of $L$ from Lemma~\ref{l:reductionstable} an element $\jmath \in L$ is a block diagonal of the form
\begin{center}
$\begin{bmatrix}
\; \jmath_1  & & \\
& \jmath_2 & \\
& & \jmath_3
\end{bmatrix}$
\end{center}
where $\jmath_1 \in \GL_{a_1} \times \cdots \times \GL_{a_r}$, $\jmath_2 \in \overline{L}$, and $\jmath_3 \in \GL_{c_1} \times \cdots \times \GL_{c_t}$. Thus our claim is that 
\begin{center}
$i(\mathrm{pr}_w(\jmath)\, \overline{g})(\modp{P_d})=\begin{bmatrix}
\;\mathrm{I}_p & & \\
& \jmath_2 \overline{g} & \\
& & \mathrm{I}_{N-\ell_d}
\end{bmatrix}(\modp{P_d})=\begin{bmatrix}
\;\jmath_1 & & \\
& \jmath_2 \overline{g} & \\
& & \jmath_3
\end{bmatrix}\modp{P_d}=\jmath\, i(\overline{g})(\modp{P_d})$
\end{center}
Note that $\ell_d \geq d$ implies $N - \ell_d \leq N-d$. This, combined with the fact that $p < d$, implies the block diagonal
\begin{center}
$a = \begin{bmatrix}
\; \jmath_1^{-1}  & & \\
& I_{\overline{N}} & \\
& & \jmath_3^{-1}
\end{bmatrix}$
\end{center}
is an element of $P_d$. Thus $i(\mathrm{pr}_w(\jmath)\, \overline{g}) = \jmath\, i(\overline{g}) a$, which implies our claim and hence $i$ is $L$-equivariant. In Lemma~\ref{l:reductioniso} we showed that $X(\overline{w}) \cong X(w)$ under this map. Since  $X(\overline{w})$ is $\overline{L}$-stable it is $L$-stable for the action defined above, while $X(w)$ is $L$-stable by hypothesis. 

Thus we have an isomorphism of $L$-varieties, indicating that $X(\overline{w})$ will be a spherical $L$-variety if and only if $X(w)$ is a spherical $L$-variety. We conclude since $X(\overline{w})$ will be a spherical $L$-variety if and only if it is a spherical $\overline{L}$-variety.
\end{proof}

\begin{proposition}
\label{p:multFreeSphericalEquiv}
The stable Levi-Schubert quadruple $(w, d, N, L)$ is spherical if and only if it is multiplicity free.
\end{proposition}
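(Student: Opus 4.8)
The plan is to prove the two implications separately: the forward implication (spherical $\Rightarrow$ multiplicity free) directly from Theorem~\ref{t:perrinSpherical}(iv), and the reverse implication (multiplicity free $\Rightarrow$ spherical) by passing to the affine cone over $X(w)$. First I would collect the standing facts. The Schubert variety $X(w)\subseteq\Grass_{d,N}\subseteq\mathbb{P}(\bigwedge^{d}\mathbb{C}^{N})$ is projective, normal, and $L$-stable by hypothesis; writing $\mathcal{O}(1)$ for the restriction to $X(w)$ of the tautological bundle of $\mathbb{P}(\bigwedge^{d}\mathbb{C}^{N})$, standard monomial theory (Theorem~\ref{T:FundamentalSMT} together with the projective normality of Schubert varieties) identifies $\mathbb{C}[X(w)]_{r}$ with $H^{0}(X(w),\mathcal{O}(r))$ as $L$-modules for every $r\ge 0$. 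Consequently the affine cone $\widehat{X(w)}:=\operatorname{Spec}\mathbb{C}[X(w)]$ is a normal affine variety carrying an action of the reductive group $L\times\mathbb{C}^{*}$ (with $\mathbb{C}^{*}$ acting through the grading), whose Borel subgroup is $B_{L}\times\mathbb{C}^{*}$ for any Borel $B_{L}$ of $L$.

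For the forward implication, suppose $X(w)$ is $L$-spherical. Because $\GL_{N}$ acts linearly on $\bigwedge^{d}\mathbb{C}^{N}$, the bundle $\mathcal{O}(1)$ is $\GL_{N}$-linearized; restricting to the $L$-stable subvariety $X(w)$ and passing to tensor powers, each $\mathcal{O}(r)|_{X(w)}$ is $L$-linearized. Since $X(w)$ is quasi-projective, Theorem~\ref{t:perrinSpherical}(iv) then shows that $H^{0}(X(w),\mathcal{O}(r))\cong\mathbb{C}[X(w)]_{r}$ is a multiplicity-free $L$-module for every $r$, i.e.\ $(w,d,N,L)$ is multiplicity free.

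For the reverse implication, suppose $(w,d,N,L)$ is multiplicity free, so every $\mathbb{C}[X(w)]_{r}$ is a multiplicity-free $L$-module. I would first note that the irreducible $(L\times\mathbb{C}^{*})$-modules are the outer products $V\boxtimes\mathbb{C}_{r}$ of an irreducible $L$-module $V$ with a $\mathbb{C}^{*}$-weight, and that $V\boxtimes\mathbb{C}_{r}$ occurs in $\mathbb{C}[\widehat{X(w)}]=\bigoplus_{r\ge 0}\mathbb{C}[X(w)]_{r}$ with multiplicity equal to the multiplicity of $V$ in $\mathbb{C}[X(w)]_{r}$; hence $\mathbb{C}[\widehat{X(w)}]$ is a multiplicity-free $(L\times\mathbb{C}^{*})$-module. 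By the classical characterization of affine spherical varieties --- a normal affine $G$-variety is spherical exactly when its coordinate ring is a multiplicity-free $G$-module --- the cone $\widehat{X(w)}$ is $(L\times\mathbb{C}^{*})$-spherical, hence has a dense orbit under $B_{L}\times\mathbb{C}^{*}$. This orbit cannot be contained in the cone vertex, so it lies in the punctured cone $\widehat{X(w)}\smallsetminus\{0\}$; under the $\mathbb{C}^{*}$-bundle projection $\widehat{X(w)}\smallsetminus\{0\}\to X(w)$ its image is a single $B_{L}$-orbit (as $\mathbb{C}^{*}$ acts trivially on $X(w)$) which is dense and constructible, and therefore --- being locally closed as an orbit --- open and dense. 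By Theorem~\ref{t:perrinSpherical}(i), $X(w)$ is $L$-spherical.

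The main obstacle will be the reverse implication, and within it the two slightly nonformal ingredients: invoking the Vinberg--Kimelfeld-type criterion that a multiplicity-free coordinate ring forces a normal affine variety to be spherical, and descending from the cone to $X(w)$ through the $\mathbb{C}^{*}$-bundle projection (where one uses that group orbits are locally closed to upgrade ``dense'' to ``open dense''). The remaining ingredients --- the $L$-equivariant identification $\mathbb{C}[X(w)]_{r}\cong H^{0}(X(w),\mathcal{O}(r))$ from projective normality of Schubert varieties (implicit in Theorem~\ref{T:FundamentalSMT}) and the translation between graded $L$-isotypic components and $(L\times\mathbb{C}^{*})$-isotypic components --- are routine.
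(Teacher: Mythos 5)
Your reverse implication (multiplicity free $\Rightarrow$ spherical) is correct but takes a genuinely different route from the paper. The paper stays on $X(w)$ itself: it notes that every $L$-linearized line bundle on $X(w)$ is the restriction of a power $\mathfrak{L}^{\otimes r}$ of the Pl\"{u}cker bundle (i.e.\ it uses $\operatorname{Pic}X(w)\cong\mathbb{Z}$), observes that $H^0$ vanishes for $r<0$ and equals $\mathbb{C}[X(w)]_r$ for $r\geq 0$, and then applies Theorem~\ref{t:perrinSpherical}(iv) directly. You instead pass to the affine cone, invoke the Vinberg--Kimelfeld criterion that a normal affine variety with multiplicity-free coordinate ring is spherical, and descend the dense $B_L\times\mathbb{C}^*$-orbit through the punctured cone. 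Both work: the paper's version needs the Picard-group input but none of the cone machinery, while yours trades that for projective normality of $X(w)$ (so that the cone is normal), an affine sphericality criterion not recorded in Theorem~\ref{t:perrinSpherical}, and the (routine) descent step.

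Your forward implication, however, has a gap. By the paper's definition the quadruple is multiplicity free when the decomposition of the \emph{entire} ring $\mathbb{C}[X(w)]=\bigoplus_{r}\mathbb{C}[X(w)]_r$ into irreducible $L$-modules is multiplicity free, not merely each graded piece separately. Sphericality together with Theorem~\ref{t:perrinSpherical}(iv) gives multiplicity-freeness degree by degree; to conclude, you must also rule out a single irreducible $L$-module occurring in two different degrees, and your ``i.e.'' silently skips this. The paper addresses it explicitly by citing \cite[Theorem 4.1.2]{HodgesLakshmibai}. The gap is easily repaired without that reference --- the scalar matrices $Z(\GL_N)\subset Z(L)$ act on $\mathbb{C}[X(w)]_r$ through a character depending nontrivially on $r$, so irreducible submodules in distinct degrees have distinct central characters and cannot be isomorphic --- but some such argument must be supplied.
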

\begin{proof}
We start by proving that multiplicity free implies spherical (see \cite[Proposition 4.0.1]{HodgesLakshmibai} for an alternative proof). The Pl\"{u}cker embedding of $\Grass_{d,N}$ into $\mathbb{P}(\bigwedge^d\mathbb{C}^N)$ was given in Section~\ref{subsec:SMT}. Let $\mathfrak{L}$ be the corresponding very ample line bundle on $\Grass_{d,N}$ for this embedding. Let $\tilde{\mathfrak{L}}$ be an $L$-linearized line bundle on $X(w)$. Every line bundle on $X(w)$ is the restriction of a line bundle on $\Grass_{d,N}$ and any such line bundle is $\GL_N$-linearized and of the form $\mathfrak{L}^{\otimes r}$ for some integer $r$. Hence $H^0(X(w),\tilde{\mathfrak{L}}) = H^0(X(w),\mathfrak{L}^{\otimes r}|_{X(w)})$. When $r$ is strictly less than zero we have that $H^0(X(w),\mathfrak{L}^{\otimes r}|_{X(w)})=0$ by \cite[Theorem 5.6.4]{MR3408060}. When $r$ is non-negative $H^0(X(w),\mathfrak{L}^{\otimes r}|_{X(w)})$ is the degree $r$ portion of the homogeneous coordinate ring of $X(w)$, which by hypothesis is a multiplicity free $L$-module. In both cases $H^0(X(w),\tilde{\mathfrak{L}}) = H^0(X(w),\mathfrak{L}^{\otimes r}|_{X(w)})$ is a multiplicity free $L$-module and hence by Theorem~\ref{t:perrinSpherical} $X(w)$ is a spherical $L$-variety.

 For the other direction, suppose that $(w, d, N, L)$ is spherical. The homogeneous coordinate ring $\mathbb{C}[X(w)]=\bigoplus_{r\geq 0}H^0(X(w),\mathfrak{L}^{\otimes r}|_{X(w)})$. Via Theorem~\ref{t:perrinSpherical}, we know that if $X(w)$ is a spherical $L$-variety and $\beta$ is an $L$-linearized line bundle on $X(w)$, then $H^0(X(w),\beta)$ is a multiplicity free $L$-module. As $\mathfrak{L}^{\otimes r}|_{X(w)}$ is the restriction of a $\GL_N$-linearized line bundle on $\Grass_{d,n}$ to an $L$-stable subvariety, $\mathfrak{L}^{\otimes r}|_{X(w)}$ is $L$-linearized. The three preceding statements combine to imply that each individual degree piece of $\mathbb{C}[X(w)]$ is multiplicity free. In \cite[Theorem 4.1.2]{HodgesLakshmibai}, it is shown, via the explicit description of the decomposition of $\mathbb{C}[X(w)]$ into irreducible $L$-modules, that an irreducible $L$-submodule in a fixed degree of $\mathbb{C}[X(w)]$ can not be isomorphic to an irreducible $L$-submodule in a different degree. Hence $(w, d, N, L)$ is multiplicity free.
\end{proof}
\begin{corollary}
\label{c:mainSphericalReduction}
Let $(w, d, N, L)$ be a stable Levi-Schubert quadruple. Then $(w, d, N, L)$ is multiplicity free if and only if its reduction $(\overline{w}, \overline{d}, \overline{N}, \overline{L})$ is multiplicity free.
\end{corollary}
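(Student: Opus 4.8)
The plan is to deduce this corollary immediately by chaining together the three results already established for Levi-Schubert quadruples and their reductions. First I would invoke Proposition~\ref{p:multFreeSphericalEquiv} for the quadruple $(w,d,N,L)$, which is stable by hypothesis, to replace the property ``multiplicity free'' by the property ``spherical''. Next, Proposition~\ref{l:reductionSpherical} gives that $(w,d,N,L)$ is spherical if and only if its reduction $(\overline{w},\overline{d},\overline{N},\overline{L})$ is spherical. Finally, Lemma~\ref{l:reductionstable} guarantees that the reduction $(\overline{w},\overline{d},\overline{N},\overline{L})$ is itself a stable Levi-Schubert quadruple, so Proposition~\ref{p:multFreeSphericalEquiv} applies to it as well and lets us replace ``spherical'' by ``multiplicity free'' for the reduction. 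Composing these three equivalences yields that $(w,d,N,L)$ is multiplicity free precisely when it is spherical, precisely when $(\overline{w},\overline{d},\overline{N},\overline{L})$ is spherical, precisely when $(\overline{w},\overline{d},\overline{N},\overline{L})$ is multiplicity free, which is exactly the claim.

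There is essentially no obstacle here, since all the substantive work has been carried out in Lemmas~\ref{l:reductioniso} and \ref{l:reductionstable} and in Propositions~\ref{l:reductionSpherical} and \ref{p:multFreeSphericalEquiv}. The one point to be explicit about is that Proposition~\ref{p:multFreeSphericalEquiv} is stated for \emph{stable} quadruples, so one must cite Lemma~\ref{l:reductionstable} to know the reduction is stable before applying it there. Alternatively, one could seek a more self-contained argument by transporting the decomposition of $\mathbb{C}[X(w)]$ into irreducible modules directly through the graded $\mathbb{C}$-algebra isomorphism $\mathbb{C}[X(w)]\cong\mathbb{C}[X(\overline{w})]$ of Lemma~\ref{l:reductioniso}; but this would require verifying that this isomorphism intertwines the $L$-action on $\mathbb{C}[X(w)]$ with the $\overline{L}$-action on $\mathbb{C}[X(\overline{w})]$ (in the spirit of the equivariance computation in the proof of Proposition~\ref{l:reductionSpherical}), and is strictly more work than the three-step chain above. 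I would therefore present only the short chaining argument, with a parenthetical pointer to Lemma~\ref{l:reductionstable} for stability of the reduction.
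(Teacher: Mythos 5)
Your proposal is correct and is precisely the intended argument: the paper states this corollary without proof immediately after Propositions~\ref{l:reductionSpherical} and \ref{p:multFreeSphericalEquiv}, relying on exactly the chain of equivalences you describe. Your explicit remark that Lemma~\ref{l:reductionstable} is needed to apply Proposition~\ref{p:multFreeSphericalEquiv} to the reduction is a worthwhile point the paper leaves implicit.
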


Towards the completion of our classification of spherical Levi-Schubert quadruples we give a multiplicity free criterion derived from Theorem~\ref{t:LeviSchubertMainDecomp}.

\begin{proposition}
\label{p:MultFreeHighestLevel}
The stable Levi-Schubert quadruple $(w, d, N, L)$ is multiplicity free if and only if the following two properties are satisfied for all $r \geq 1$.
\begin{center}
\begin{tabular}{cp{5.25in}}
($\MC$) & For all degree r standard heads $\underline{\theta} \in \He_{L, r}^{std}$, the $\GL_{N_k}$ skew Weyl module $\mathbb{W}^{\lambda_{\underline{\theta}}^{(k)} / \mu_{\underline{\theta}}^{(k)}}(\mathbb{C}^{N_{k}})$ is multiplicity free for all $1 \leq k \leq b_L$. \\
($\MCC$) & Let $\underline{\theta},\underline{\theta}' \in \He_{L, r}^{std}$ be two degree r standard heads such that $\underline{\theta} \neq \underline{\theta}'$. If $M$ is an irreducible $L$-submodule of $\mathbb{W}_{\underline{\theta}}$ and $M'$ is an irreducible $L$-submodule of $\mathbb{W}_{\underline{\theta}'}$, then $M \ncong M'$.
\end{tabular}
\end{center}
\end{proposition}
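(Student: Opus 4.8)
The plan is to deduce the statement directly from the graded decomposition in Theorem~\ref{t:LeviSchubertMainDecomp}. First I would reduce to working one graded piece at a time. As was already used in the proof of Proposition~\ref{p:multFreeSphericalEquiv}, \cite[Theorem 4.1.2]{HodgesLakshmibai} asserts that an irreducible $L$-submodule occurring in one degree of $\mathbb{C}[X(w)]$ is never isomorphic to an irreducible $L$-submodule occurring in another degree. Hence $\mathbb{C}[X(w)]$ is multiplicity free as an $L$-module if and only if each $\mathbb{C}[X(w)]_r$ is, and by Theorem~\ref{t:LeviSchubertMainDecomp} the question becomes whether the finite direct sum
\[
\mathbb{C}[X(w)]_r \;\cong\; \bigoplus_{\underline{\theta} \in \He_{L,r}^{std}} \mathbb{W}^*_{\underline{\theta}}
\]
is multiplicity free, for every $r \geq 1$.

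Next I would invoke the elementary fact that a finite direct sum of $L$-modules is multiplicity free exactly when each summand is multiplicity free and no two distinct summands have an irreducible constituent in common. Dualizing is an involution on isomorphism classes of irreducible $L$-modules, so $\mathbb{W}^*_{\underline{\theta}}$ is multiplicity free if and only if $\mathbb{W}_{\underline{\theta}}$ is, and two of the $\mathbb{W}^*_{\underline{\theta}}$ share a constituent if and only if the corresponding $\mathbb{W}_{\underline{\theta}}$ do; the latter is precisely the content of $\MCC$. For the former, recall that $\mathbb{W}_{\underline{\theta}} = \bigotimes_{k=1}^{b_L} \mathbb{W}^{\lambda_{\underline{\theta}}^{(k)}/\mu_{\underline{\theta}}^{(k)}}(\mathbb{C}^{N_k})$ is an outer tensor product of $\GL_{N_k}$-modules under the factorwise action of $L = \GL_{N_1} \times \cdots \times \GL_{N_{b_L}}$. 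Since every irreducible representation of a direct product of groups is an outer tensor product of irreducibles of the factors, the multiplicity of a given irreducible $L$-module in $\mathbb{W}_{\underline{\theta}}$ is the product of the multiplicities of the corresponding $\GL_{N_k}$-irreducibles in the factors; therefore $\mathbb{W}_{\underline{\theta}}$ is multiplicity free if and only if each factor $\mathbb{W}^{\lambda_{\underline{\theta}}^{(k)}/\mu_{\underline{\theta}}^{(k)}}(\mathbb{C}^{N_k})$ is multiplicity free as a $\GL_{N_k}$-module. Quantified over all $\underline{\theta}$, this is exactly $\MC$.

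Putting these together, for each fixed $r$ the module $\mathbb{C}[X(w)]_r$ is multiplicity free if and only if both $\MC$ and $\MCC$ hold in degree $r$; conjoining over all $r \geq 1$ and combining with the first-paragraph reduction gives the proposition. I do not expect a serious obstacle here: the only care needed is the bookkeeping with dual modules and the standard identification of the irreducibles of the product group $L$, while the one substantive input---that constituents in distinct degrees never coincide---is supplied by \cite[Theorem 4.1.2]{HodgesLakshmibai}. The real work is merely relocated: verifying $\MC$ and $\MCC$ against the combinatorics of $b_L$, the $h_k$, and the $N_k$ is what the subsequent propositions carry out.
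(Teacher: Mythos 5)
Your argument is correct and is essentially the paper's own proof: the paper simply declares the proposition ``immediate'' from Theorem~\ref{t:LeviSchubertMainDecomp} together with the fact from \cite[Theorem 4.1.2]{HodgesLakshmibai} that irreducible constituents in distinct degrees never coincide, and your write-up just fills in the routine bookkeeping (duality, and the identification of irreducibles of the product group $L$ as outer tensor products) that the paper leaves implicit.
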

\begin{proof}
This is immediate by Theorem~\ref{t:LeviSchubertMainDecomp} and the aforementioned fact from \cite[Theorem 4.1.2]{HodgesLakshmibai} that there can be no isomorphisms between irreducible $L$-submodules in different degrees of the homogeneous coordinate ring.
\end{proof}

\section{Classification}
\label{sec:class}
Fix a stable, reduced Levi-Schubert quadruple $(w,d,N,L)$ with $w=(\ell_1,\ldots,\ell_d)$. If $Q=P_I$ is the parabolic subgroup with Levi part $L$, then $I = \{ \alpha_{i_1},\ldots,\alpha_{i_m} \} \subseteq \Delta$ for some $i_1 < \cdots < i_q$ and $\Delta \setminus I = \{ \alpha_{j_1},\ldots, \alpha_{j_r} \}$ for some $j_1 \leq \cdots \leq j_r$. Let $N_k$ for $1 \leq k \leq b_L$ be as in Section \ref{sec:Decomp}. The goal of this section is to give explicit criterion for when such a quadruple is multiplicity free (equivalently spherical). First we will need some additional notation.

Define the non-negative integers $h_1,\ldots,h_{b_L}$ by $h_k = |\left\{j | \ell_j \in \Bl_{L, k} \right\}|$. Then for all $1 \leq k \leq b_L$ we have
\begin{equation}
\label{e:hForm}
0 \leq h_k \leq N_k.
\end{equation}

Additionally, since the subsets $\Bl_{L,k}$ for $1 \leq k \leq b_L$ partition the set $\{ 1,\ldots,N \}$, each entry in $w$ is in some block. This means
\begin{equation}
\label{e:hSumd}
d = h_1 + \cdots + h_{b_L}.
\end{equation}
Since $(w,d,N,L)$ is reduced, the fact that $\ell_1 \neq 1$ and $\ell_d = N$ imply
\begin{equation}
\label{e:hReduced}
\begin{array}{c}
h_1 < N_1 \\[5pt]
h_{b_L} \geq 1
\end{array}
\end{equation}
\begin{remark}
\label{r:MaxLevi}
If $L_{max}$ is the maximal Levi acting on $X(w)$ we can refine the bounds in \eqref{e:hForm} slightly. In this case, by \eqref{e:formLw}, we see that each $j_b$, such that $\alpha_{j_b} \in \Delta \setminus I$, is equal to some entry in $w$. Suppose that $h_k = N_k$. Then $j_{k-1}+1,\ldots,j_k$ are all entries in $w$, and this would imply, by \eqref{e:formLw}, that $\alpha_{j_{k-1}} \notin \Delta \setminus I$. If $k=1$ this contradicts the fact that $(w,d,N,L_{max})$ is reduced with $\ell_1 \neq 1$. Otherwise, for $k > 1$, this is a contradiction of the definition of $\Bl_{L_{max},k-1}$. Thus $h_k < N_k$. Further, we know that $j_k \in \Bl_{L_{max},k}$, and since $j_k$ is an entry in $w$ this means $h_k > 0$. One additional important fact that follows from \eqref{e:formLw} is that $N_k > 1$. Summarizing, if $L_{max}$ is the maximal Levi acting on $X(w)$, then
\begin{equation}
\label{e:hFormMax}
\begin{array}{c}
0 < h_k < N_k \\[5pt]
N_k > 1
\end{array}
\end{equation}
for all $1 \leq k \leq b_{L_{max}}$.
\end{remark}

The other notation we will need is an alternative method for indexing the degree 1 heads of type $L$. For non-negative integers $m_1,\ldots,m_{b_L}$ define the sequence
\begin{center}
$\Theta(m_1,\ldots,m_{b_L}) := (j_1 - m_1 + 1,\ldots,j_1,j_2-m_2+1,\ldots,j_2,\ldots,j_{b_L}-m_{b_L}+1,\ldots,j_{b_L})$
\end{center}
where $j_{b_L}=j_{r+1}$ is defined to be equal to $N$. Here our convention is that when $m_k$ is zero we omit the corresponding subsequence. Thus, such a sequence will always be of length $m_1 +\cdots + m_{b_L}$. In general this will not even be an element of $W^{P_d}$, however, if certain properties hold it will be a degree 1 head of type $L$.
\bibliographystyle{alpha}

\begin{lemma}
\label{l:headCriterion}
Let $m_1,\ldots,m_{b_L}$ be non-negative integers. Then $\Theta(m_1,\ldots,m_{b_L})$ is a degree 1 head of type L if and only if the following three criterion are satisfied.
\begin{enumerate}[label=(\arabic*)]
\item $m_1+\cdots+m_{b_L}=d$
\item $m_k \leq N_k$ for all $1 \leq k \leq b_L$
\item $m_1 + \cdots + m_k \geq h_1 + \cdots + h_k$ for all $1 \leq k \leq b_L$
\end{enumerate}
Further, the degree 1 heads of type L are in bijection with the non-negative integers $m_1,\ldots,m_{b_L}$ satisfying these conditions.
\end{lemma}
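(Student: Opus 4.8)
The plan is to first rewrite $w$ itself in the $\Theta$-notation and then reduce everything to a term-by-term Bruhat comparison. Since the quadruple is stable, $X(w)$ is $L$-stable, so $w$ is itself a degree $1$ head of type $L$; Proposition~\ref{p:HeadLComb} then forces $w\cap\Bl_{L,k}$ to be a final segment of $\Bl_{L,k}$ for every $k$, and since $|w\cap\Bl_{L,k}|=h_k$ this says precisely $w=\Theta(h_1,\ldots,h_{b_L})$. Set $M_k=m_1+\cdots+m_k$ and $H_k=h_1+\cdots+h_k$ (so $M_0=H_0=0$); the computations below rest on the fact that the block-$k$ portions of $\Theta(m_1,\ldots,m_{b_L})$ and of $w$ are the top $m_k$, resp.\ top $h_k$, elements of $\Bl_{L,k}$.

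For the ``if'' direction, assume (1)--(3). By (1) the sequence $\Theta:=\Theta(m_1,\ldots,m_{b_L})$ has exactly $d$ entries, and by (2) its block-$k$ portion $\{j_k-m_k+1,\ldots,j_k\}$ lies inside $\Bl_{L,k}$; hence the entries of $\Theta$ are strictly increasing with values in $\{1,\ldots,N\}$, i.e.\ $\Theta\in W^{P_d}$, and $\Theta\cap\Bl_{L,k}$ is a final segment of $\Bl_{L,k}$, hence maximal. By Proposition~\ref{p:HeadLComb} it remains only to check $\Theta\le w$, i.e.\ $i_t\le\ell_t$ for the $t$-th entries $i_t$ of $\Theta$ and $\ell_t$ of $w$. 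Say $i_t$ lies in block $k$ and $\ell_t$ in block $k'$. Since $t>M_{k-1}\ge H_{k-1}$ by (3), $\ell_t$ cannot lie in a block before $k$, so $k'\ge k$; if $k'>k$ then $i_t\le j_k<j_{k'-1}+1\le\ell_t$, while if $k'=k$, writing $t=M_{k-1}+s=H_{k-1}+s'$ gives $i_t=j_k-m_k+s$ and $\ell_t=j_k-h_k+s'$, so $i_t\le\ell_t$ is equivalent to $H_{k-1}+h_k\le M_{k-1}+m_k$, i.e.\ to $H_k\le M_k$, which is (3). Thus $\Theta\le w$ and $\Theta$ is a degree $1$ head of type $L$.

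For the converse and the bijection, let $\theta$ be a degree $1$ head of type $L$. By Proposition~\ref{p:HeadLComb}, $\theta\cap\Bl_{L,k}$ is a final segment of $\Bl_{L,k}$ of some size $c_k$ with $0\le c_k\le N_k$, so $\theta=\Theta(c_1,\ldots,c_{b_L})$; since $c_k=|\theta\cap\Bl_{L,k}|$, this exhibits $\theta$ as $\Theta$ of a \emph{unique} tuple all of whose entries are $\le N_k$, which is exactly what makes $(m_1,\ldots,m_{b_L})\mapsto\Theta(m_1,\ldots,m_{b_L})$ injective, hence bijective, onto the set of degree $1$ heads. That $(c_k)$ satisfies (1)--(3) is now easy: (2) holds by construction; (1) holds because $\theta\in W^{P_d}$ has $d$ entries; and for (3), if $C_k:=c_1+\cdots+c_k<H_k$ for some $k$, then the $(C_k+1)$-st entry of $\theta$ lies in a block past $k$ and so exceeds $j_k$, whereas the $(C_k+1)$-st entry of $w$ lies in a block $\le k$ and so is at most $j_k$, contradicting $\theta\le w$; hence $M_k\ge H_k$ for all $k$.

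The substantive ingredients are the identification $w=\Theta(h_1,\ldots,h_{b_L})$ — which is where stability enters — and the ``change of block'' bookkeeping in the comparison $i_t\le\ell_t$, in particular the case $k'=k$ that yields exactly inequality (3); the remaining verifications are routine unwinding of definitions. The one point needing care is the converse: one should recover the representing tuple $(c_k)$ from $\theta$ via Proposition~\ref{p:HeadLComb} rather than reasoning about an arbitrary presentation $\Theta(m_1,\ldots,m_{b_L})$, because when some $m_k$ exceeds $N_k$ two distinct tuples can produce the same sequence, and condition (2) is precisely the constraint that removes this ambiguity.
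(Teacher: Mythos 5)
Your proof is correct and follows essentially the same route as the paper: conditions (1) and (2) characterize membership in $W^{P_d}$, condition (3) characterizes $\Theta \leq w = \Theta(h_1,\ldots,h_{b_L})$, and the maximality condition of Proposition~\ref{p:HeadLComb} is automatic from the form of $\Theta$. The only difference is that you spell out the term-by-term Bruhat comparison and the identification $w=\Theta(h_1,\ldots,h_{b_L})$, which the paper leaves as ``not difficult to check.''
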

\begin{proof}
The sequence $\Theta(m_1,\ldots,m_{b_L})$ has d entries if and only if $m_1+\cdots+m_{b_L}=d$. Since $N_k = j_k - j_{k-1}$, the sequence $\Theta(m_1,\ldots,m_{b_L})$ will have no repeated values if and only if $m_k \leq N_k$ for all $1 \leq k \leq b_L$. Thus the first two conditions are satisfied if and only if $\Theta(m_1,\ldots,m_{b_L})$ is an element of $W^{P_d}$. Identifying $w$ with $\Theta(h_1,\ldots,h_{b_L})$, it is not difficult to check that for $\Theta(m_1,\ldots,m_{b_L}) \in W^{P_d}$, $\Theta(m_1,\ldots,m_{b_L}) \leq \Theta(h_1,\ldots,h_{b_L})$ if and only if condition 3 is satisfied. Thus $\Theta(m_1,\ldots,m_{b_L}) \in H_w$ if and only if conditions 1, 2, and 3 are satisfied.

All that remains is to verify that the three criterion imply that $\Theta(m_1,\ldots,m_{b_L})$ satisfies the combinatorial description of degree 1 heads of type $L$ given in Proposition~\ref{p:HeadLComb}. As $\Bl_{L,k} = \{j_{k-1}+1,...,j_k \}$, this is immediate by the definition of $\Theta(m_1,\ldots,m_{b_L})$. The fact that any head of type L can be written as $\Theta(m_1,\ldots,m_{b_L})$ for some $m_1,\ldots,m_{b_L}$ satisfying these conditions also follows trivially from Proposition~\ref{p:HeadLComb}.
\end{proof}
\begin{corollary}
\label{c:headCriterionMain} Let $\underline{\Theta}$ be a degree r head and fix a $k$ such that $1\leq k \leq b_L$.
\begin{enumerate}
\item \label{c:headCriterionMain1} Boxes in $\tab_{\underline{\theta}}$ with values in $\Bl_{L,k}$ can appear only in row $h_1 + \cdots + h_{k-1} + 1$ and below.
\item \label{c:headCriterionMain2} Boxes in $\tab_{\underline{\theta}}$ with values less than those in $\Bl_{L,k}$ can appear only in row $N_1 + \cdots + N_{k-1}$ and above.
\item \label{c:headCriterionMain3} Suppose that $h_{k+1} + \cdots + h_{b_L} < p$. Then boxes in $\tab_{\underline{\theta}}$ with values greater than those in $\Bl_{L,k}$ can appear only in the bottom $p-1$ rows.
\end{enumerate}
\end{corollary}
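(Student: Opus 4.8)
The strategy is to analyze $\tab_{\underline{\theta}}$ one column at a time, since each of the three claims concerns which rows a given box may occupy, and this is determined independently within each column (in particular, whether or not $\underline{\theta}$ is standard plays no role). Fix a column $c$ of $\tab_{\underline{\theta}}$; by construction its entries, read from top to bottom, are the entries of $\theta_{r-c+1}$ read in order, and $\theta_{r-c+1}$ is a degree $1$ head of type $L$. By Lemma~\ref{l:headCriterion} we may write $\theta_{r-c+1} = \Theta(m_1,\ldots,m_{b_L})$ for non-negative integers satisfying (1) $m_1 + \cdots + m_{b_L} = d$, (2) $m_i \le N_i$ for all $i$, and (3) $m_1 + \cdots + m_i \ge h_1 + \cdots + h_i$ for all $i$. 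Set $\sigma_i := m_1 + \cdots + m_i$ and $\sigma_0 := 0$. From the explicit form of $\Theta$, the entries of $\theta_{r-c+1}$ in positions $\sigma_{k-1}+1,\ldots,\sigma_k$ are precisely those lying in $\Bl_{L,k}$; hence in column $c$ the boxes with a value in $\Bl_{L,k}$ are exactly those in rows $\sigma_{k-1}+1,\ldots,\sigma_k$, those with a strictly smaller value (i.e.\ a value in $\Bl_{L,1} \cup \cdots \cup \Bl_{L,k-1}$) occupy rows $1,\ldots,\sigma_{k-1}$, and those with a strictly larger value occupy rows $\sigma_k+1,\ldots,d$. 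The three claims now follow by bounding $\sigma_{k-1}$ and $\sigma_k$, and, crucially, these bounds will be uniform in $c$.

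For \eqref{c:headCriterionMain1}, condition (3) of Lemma~\ref{l:headCriterion} with index $k-1$ gives $\sigma_{k-1} \ge h_1 + \cdots + h_{k-1}$, so any box carrying a value from $\Bl_{L,k}$ sits in a row $\ge \sigma_{k-1}+1 \ge h_1 + \cdots + h_{k-1} + 1$. For \eqref{c:headCriterionMain2}, condition (2) gives $\sigma_{k-1} \le N_1 + \cdots + N_{k-1}$, so any box carrying a value below $\Bl_{L,k}$ sits in a row $\le \sigma_{k-1} \le N_1 + \cdots + N_{k-1}$. For \eqref{c:headCriterionMain3}, condition (1) gives $h_1 + \cdots + h_k = d - (h_{k+1} + \cdots + h_{b_L})$, which under the hypothesis $h_{k+1} + \cdots + h_{b_L} < p$ exceeds $d - p$; combining with condition (3) at index $k$ yields $\sigma_k \ge h_1 + \cdots + h_k > d - p$, so $\sigma_k \ge d - p + 1$ by integrality. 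Thus any box carrying a value above $\Bl_{L,k}$ sits in a row $\ge \sigma_k + 1 \ge d - p + 2$, i.e.\ among the bottom $p-1$ rows.

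I do not anticipate a genuine obstacle: the corollary is a direct translation of Lemma~\ref{l:headCriterion} through the dictionary between columns of $\tab_{\underline{\theta}}$ and degree $1$ heads. The only places that reward a moment's attention are the integrality step $\sigma_k > d - p \Rightarrow \sigma_k \ge d - p + 1$ in \eqref{c:headCriterionMain3}, and the degenerate indices $k = 1$ (where \eqref{c:headCriterionMain2} is vacuous, there being no value below $\Bl_{L,1}$) and $k = b_L$ (where \eqref{c:headCriterionMain3} is vacuous), for which the asserted bounds hold trivially.
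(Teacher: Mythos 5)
Your proof is correct and follows essentially the route the paper intends: the corollary is stated without proof as a direct consequence of Lemma~\ref{l:headCriterion}, and your column-by-column dictionary (column $c$ of $\tab_{\underline{\theta}}$ equals $\theta_{r-c+1}=\Theta(m_1,\ldots,m_{b_L})$, with the partial sums $\sigma_i$ bounded by the lemma's three conditions) is exactly the intended argument, here spelled out with the right attention to the integrality step in \eqref{c:headCriterionMain3} and the degenerate cases $k=1$, $k=b_L$.
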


\begin{example}
As in Example \ref{e:blocks}, we let $d=3$, $N=9$ and consider the Schubert variety $X(w)$ where  $w=(2,7,9) \in W^{P_d}$. Then $X(w)$ is $L=L_I$-stable for $I=\{\alpha_1,\alpha_3,\alpha_4,\alpha_5,\alpha_6,\alpha_8 \}$ and $\Delta \setminus I = \{ \alpha_2, \alpha_7 \}$. Then $b_L=3$, $j_1 = 2$, $j_2 = 7$, $j_3 = 9$, and
\begin{center}
$\Bl_{L, 1} = \{ 1, 2 \} \qquad \Bl_{L, 2} = \{ 3, 4, 5, 6, 7 \} \qquad \Bl_{L, 3} = \{ 8, 9 \}.$
\end{center}
Then 
\begin{center}
$\begin{array}{r@{\hspace{4pt}}l}
\Theta(3,2,3) &= (0,1,2,6,7,7,8,9) \\[4pt]

\Theta(0,1,2) &= (7,8,9) \\[4pt]

\Theta(2,1,0) &= (1,2,7) \\[4pt]

\Theta(1,1,1) &= (2,7,9) \\

\end{array}$
\end{center}
and we have that $\Theta(0,1,2), \Theta(2,1,0), \Theta(1,1,1) \in  W^{P_d}$. Note that of these three only $\Theta(2,1,0)$ and $\Theta(1,1,1)$ are degree 1 heads of type $L$ since $\Theta(0,1,2) \nleq w$.

This indexing method is particularly useful for studying the skew Young diagrams associated to a degree $r$ head; we will primarily use it to exhibit degree r heads with specific properties. Consider the degree 3 head $\underline{\theta} = (\Theta(1,1,1) , \Theta(2,1,0), \Theta(2,1,0))$. Summing the first entry of each head in $\underline{\theta}$ we see that the skew semistandard tableau $\tab_{\underline{\theta}}$ will have 5 boxes with values in $\Bl_{L, 1}$ and so $\lambda_{\underline{\theta}}^{(1)} / \mu_{\underline{\theta}}^{(1)}$ will have 5 boxes. It is not difficult to check that $\lambda_{\underline{\theta}}^{(1)} / \mu_{\underline{\theta}}^{(1)} = (3,2)/\emptyset$. The skew diagrams associated to the other blocks may be worked out in this way as well.
\end{example}

\begin{remark}
\label{r:headFormat}
The non-negative integers $h_1,\ldots,h_{b_L}$ and their relation to $N_1,\ldots,N_{b_L}$ give a lot of information about possible degree 1 heads of type $L$ and hence about possible degree $r$ heads. When considering a degree r head $\underline{\theta}$ and its associated semistandard tableau $\tab_{\underline{\theta}}$ we may say the following. Suppose that $h_1 + \cdots + h_{k-1} + 1 \geq N_1 + \cdots + N_{k-1}$; then \eqref{e:hForm} and \eqref{e:hReduced} imply that $h_1 = N_1 + 1, h_2=N_2,\ldots,h_{k-1}=N_{k-1}$. Then Corollary \ref{c:headCriterionMain}\eqref{c:headCriterionMain1}\eqref{c:headCriterionMain2} implies that in $\tab_{\underline{\theta}}$ the boxes containing values in $\Bl_{L,k}$ can only appear in row $h_1 + \cdots + h_{k-1} + 1$ and greater, while boxes with values less than those in $\Bl_{L,k}$ can appear in rows no greater than $N_1 + \cdots + N_{k-1} = h_1 + \cdots + h_{k-1} + 1$. These combine to imply, since $\tab_{\underline{\theta}}$ is semistandard, that in the skew diagram $\lambda_{\underline{\theta}}^{(k)} / \mu_{\underline{\theta}}^{(k)}$ defined in Section \ref{sec:Decomp} we must have $\mu_{\underline{\theta}}^{(k)}$ equal to either $\emptyset$ or $(p)$ for some positive integer $p$.

\end{remark}

\begin{proposition}
\label{p:MCCSatCrit}
Let $(w,d,N,L)$ be a reduced Levi-Schubert quadruple. Then the multiplicity-free criterion $\MCC$ from Proposition~\ref{p:MultFreeHighestLevel} is satisfied if and only if for all $1 < k < b_L - 1$ at least one the two following conditions holds.
\begin{enumerate}
\item $h_1 + \cdots + h_k + 1 \geq N_1 + \cdots + N_k$
\item $h_{k+1} + \cdots + h_{b_L} < 2$
\end{enumerate}
\end{proposition}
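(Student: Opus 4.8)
The plan is to analyze the criterion $\MCC$ from Proposition~\ref{p:MultFreeHighestLevel} directly in terms of the combinatorics of degree $r$ heads and their associated skew diagrams. Recall that $\MCC$ asks that for distinct standard heads $\underline{\theta} \neq \underline{\theta}'$ in $\He_{L,r}^{std}$, no irreducible $L$-submodule of $\mathbb{W}_{\underline{\theta}}$ is isomorphic to one of $\mathbb{W}_{\underline{\theta}'}$. Since $\mathbb{W}_{\underline{\theta}} = \bigotimes_{k=1}^{b_L} \mathbb{W}^{\lambda_{\underline{\theta}}^{(k)}/\mu_{\underline{\theta}}^{(k)}}(\mathbb{C}^{N_k})$, an irreducible $L$-module appearing in it is a tensor product $\bigotimes_k \mathbb{W}^{\nu^{(k)}}(\mathbb{C}^{N_k})$ where each $\nu^{(k)}$ is a partition with $\lr{\mu_{\underline{\theta}}^{(k)}}{\nu^{(k)}}{\lambda_{\underline{\theta}}^{(k)}} \neq 0$. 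Two such tensor products agree if and only if the individual partitions $\nu^{(k)}$ agree for each $k$. So $\MCC$ fails exactly when there exist $\underline{\theta} \neq \underline{\theta}'$ and a common tuple of partitions $(\nu^{(1)},\ldots,\nu^{(b_L)})$ that arises as a Littlewood-Richardson constituent of both collections of skew diagrams. The first step is therefore to translate ``$\MCC$ fails'' into this concrete statement about skew diagrams.

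Next I would prove the easy direction: if for \emph{some} $k$ with $1 < k < b_L - 1$ both (1) and (2) fail, then $\MCC$ fails. When (1) fails, i.e.\ $h_1 + \cdots + h_k + 1 < N_1 + \cdots + N_k$, there is ``room'' in the first $k$ blocks; when (2) fails, i.e.\ $h_{k+1} + \cdots + h_{b_L} \geq 2$, there are at least two entries of $w$ distributed among blocks $k+1,\ldots,b_L$. Using the $\Theta$-indexing of degree 1 heads (Lemma~\ref{l:headCriterion}) and Corollary~\ref{c:headCriterionMain}, I would construct two distinct standard degree $r$ heads $\underline{\theta}$, $\underline{\theta}'$ that ``shuffle'' a box between block $k$ and some block $> k$ in a way that leaves the shapes $\lambda_{\underline{\theta}}^{(m)}/\mu_{\underline{\theta}}^{(m)}$ unchanged as abstract skew diagrams for every $m$ (or changed only in a way that produces an overlapping LR constituent). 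Concretely, one would exhibit a degree $2$ (or small $r$) example: a head where a single box can be moved, yielding the same tensor product of skew Weyl modules — this forces a repeated irreducible $L$-module and kills $\MCC$. The freedom granted by the failure of (1) and (2) is exactly what makes such a move possible; conditions like Lemma~\ref{lemma:LRSphericalClassH}\eqref{lemma:LRSphericalClassH5} (a single row $\nu = (n)$ always has coefficient $1$) may be invoked to guarantee the constructed modules actually appear.

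For the converse — if for every $k$ with $1 < k < b_L - 1$ at least one of (1), (2) holds, then $\MCC$ holds — the plan is to show that the tuple $(\lambda_{\underline{\theta}}^{(k)}/\mu_{\underline{\theta}}^{(k)})_k$, together with the LR constituents it produces, determines $\underline{\theta}$ uniquely among standard heads. The structural input is Remark~\ref{r:headFormat}: whenever (1) holds at level $k$ we have $h_1 = N_1+1$, $h_2 = N_2,\ldots,h_{k-1}=N_{k-1}$, forcing each $\mu_{\underline{\theta}}^{(k)}$ to be $\emptyset$ or a single row $(p)$, and Corollary~\ref{c:headCriterionMain}\eqref{c:headCriterionMain3} gives the dual statement when (2) holds (the later blocks contribute at most one box's worth of ``width'' to the relevant rows, so those skew diagrams are essentially rectangles or hooks). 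In either case the skew diagram in block $k$ is rigid enough that, reading the collection of shapes $\lambda_{\underline{\theta}}^{(k)}/\mu_{\underline{\theta}}^{(k)}$ from block $1$ onward and using that $\tab_{\underline{\theta}}$ is semistandard with column content determined by the $\theta_i$, one can reconstruct the multiset $\{\theta_1,\ldots,\theta_r\}$ and hence $\underline{\theta}$. I would argue block-by-block: the shape in block $1$ pins down $(\theta_i \cap \Bl_{L,1})_i$; given that, the shape in block $2$ pins down the block-$2$ parts; and so on. The rigidity at the ``middle'' blocks $1 < k < b_L-1$ is what prevents two heads from having the same total collection of skew shapes and the same LR constituents.

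The main obstacle I expect is the converse, specifically making the reconstruction argument airtight: a common LR constituent $(\nu^{(1)},\ldots,\nu^{(b_L)})$ need not equal the shapes $(\lambda_{\underline{\theta}}^{(k)}/\mu_{\underline{\theta}}^{(k)})_k$ themselves, so one must rule out the possibility that two genuinely different skew shapes in some block share a Littlewood-Richardson constituent. This is where the classification of multiplicity-free skew Schur functions (Theorem~\ref{T:skewMultFree}) and the explicit small-coefficient computations of Lemma~\ref{lemma:LRSphericalClassH} will do the heavy lifting: when the $\mu^{(k)}$ are forced to be $\emptyset$ or a single row and the $\lambda^{(k)}$ are near-rectangular, the skew Schur functions involved are multiplicity-free and their Schur expansions are transparent enough to compare directly. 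I also anticipate needing a careful case split on whether the ``room'' created by the failure of (1) lies strictly inside block $k$ or straddles a block boundary, and on the position of the one-or-two entries counted by $h_{k+1}+\cdots+h_{b_L}$; handling the edge cases $k=2$ and $k = b_L-2$ cleanly (so the range $1 < k < b_L-1$ is nonvacuous only when $b_L \geq 4$) will require a little bookkeeping but no new ideas.
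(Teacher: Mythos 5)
Your overall architecture matches the paper's: prove the ``only if'' direction by contrapositive via an explicit pair of standard heads sharing a constituent, and prove the ``if'' direction by a rigidity argument showing distinct heads cannot share constituents. But there is a genuine gap in your plan for the ``if'' direction. You correctly identify the central difficulty --- a common irreducible constituent $(\nu^{(1)},\ldots,\nu^{(b_L)})$ need not equal the skew shapes, so one must rule out two \emph{different} shapes in some block sharing a Littlewood--Richardson constituent --- but the tool you propose for it, Theorem~\ref{T:skewMultFree}, cannot do this job. Multiplicity-freeness controls the coefficients \emph{within} a single expansion $\sch{\lambda/\mu}=\sum_\nu \lr{\mu}{\nu}{\lambda}\sch{\nu}$; it says nothing about whether two different skew Schur functions have disjoint supports (they typically do not: $\sch{(2)}$ and $\sch{(2,1)/(1)}=\sch{(2)}+\sch{(1,1)}$ already share $\sch{(2)}$). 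The way the paper escapes this is to never compare constituents of different shapes at the middle blocks at all: a common constituent forces only the \emph{sizes} $|\nu^{(k)}|$ to agree, i.e.\ equal box counts per block in $\tab_{\underline{\theta}}$ and $\tab_{\underline{\theta}'}$; at the extreme blocks $k=1$ and $k=b_L$ the skew Weyl modules are actually irreducible, so there the constituent \emph{is} the shape and the box positions agree outright; and for the middle blocks the hypothesis (via Remark~\ref{r:headFormat} and Corollary~\ref{c:headCriterionMain}) forces the positions of all boxes from the counts alone, with the induction running from the transition block $n$ \emph{downward} to block $2$ (the rigidity when condition (2) holds comes from later blocks being confined to the bottom row, so your proposed ``from block $1$ onward'' order does not set up cleanly). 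Since a head is determined by the block membership of its entries, equal positions force $\underline{\theta}=\underline{\theta}'$.

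On the other direction, your sketch is under-specified in a way that matters. A pair of heads with literally identical skew shapes in every block would indeed violate $\MCC$, but nothing in the failure of (1) and (2) produces such a pair; the paper's construction instead produces shapes that genuinely \emph{differ} in blocks $k$ and $k+1$ (e.g.\ $(2^{m_k},1,1)/(1)$ versus $(2^{m_k+1})/(1)$) and then verifies via Lemma~\ref{lemma:LRSphericalClassH}\eqref{lemma:LRSphericalClassH1}--\eqref{lemma:LRSphericalClassH3} that they share the constituent $(2^{m_k},1)$, while carefully pairing two degree~$2$ heads so that the box counts per block agree (otherwise the modules could not share any constituent). Getting the auxiliary integers $m_i$ right (perturbing $h$ at the extremal indices $b\le k$ with $h_b<N_b$ and $c>k$ with $h_c>0$ so that $m_k<N_k$ and $m_{k+1}>0$, and checking Lemma~\ref{l:headCriterion} and the Bruhat comparisons) is the real content here, and your ``shuffle a box'' heuristic does not yet supply it.
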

\begin{proof}
\noindent $(\Leftarrow)$ Let $\underline{\theta}$ and $\underline{\theta}'$ be two standard degree $r$ heads. Further, let $M \subset \mathbb{W}_{\underline{\theta}}$ and $M' \subset \mathbb{W}_{\underline{\theta}'}$ be irreducible $L$-submodules. Then $M \cong \mathbb{W}^{\nu_{\underline{\theta}}^{(1)}} \otimes \cdots \otimes \mathbb{W}^{\nu_{\underline{\theta}}^{(b_L)}}$ and $M' \cong \mathbb{W}^{\nu_{\underline{\theta}'}^{(1)}} \otimes \cdots \otimes \mathbb{W}^{\nu_{\underline{\theta}'}^{(b_L)}}$ for some partitions $\nu_{\underline{\theta}}^{(k)}$ and $\nu_{\underline{\theta}'}^{(k)}$. Now suppose that $M \cong M'$, this implies that $|\nu_{\underline{\theta}}^{(k)}|=|\nu_{\underline{\theta}'}^{(k)}|$ for all $1 \leq k \leq b_L$. This implies that the number of boxes in $\tab_{\underline{\theta}}$ with values in $\Bl_{L,k}$ is equal to the number of boxes in $\tab_{\underline{\theta}'}$ with values in $\Bl_{L,k}$ for all blocks.

We can say more in the cases where $k=1$ or $k=b_L$. It is not difficult to see from the definition of $\tab_{\underline{\theta}}^{(1)}$ and $\tab_{\underline{\theta}'}^{(1)}$ that $\mu_{\underline{\theta}}^{(1)}=\emptyset$ and $\mu_{\underline{\theta}'}^{(1)}=\emptyset$. Thus $\mathbb{W}^{\lambda_{\underline{\theta}}^{(1)} / \mu_{\underline{\theta}}^{(1)}} \cong \mathbb{W}^{\lambda_{\underline{\theta}}^{(1)}}$ is irreducible, as is $\mathbb{W}^{\lambda_{\underline{\theta}'}^{(1)} / \mu_{\underline{\theta}'}^{(1)}} \cong \mathbb{W}^{\lambda_{\underline{\theta}'}^{(1)}}$. Hence $M \cong M'$ implies that the partition $\lambda_{\underline{\theta}}^{(1)}= \lambda_{\underline{\theta}'}^{(1)}$. In particular, this implies that the boxes in $\tab_{\underline{\theta}}$ in $\Bl_{L,1}$ are exactly the same as the boxes in $\tab_{\underline{\theta}'}$ in $\Bl_{L,1}$. Similarly, for $k=b_L$, $\lambda_{\underline{\theta}}^{(b_L)}=(p^q)$ for some $p\leq r$ and $q \leq d$. In addition, $\lambda_{\underline{\theta}'}^{(b_L)}=(a^b)$ for some $a\leq r$ and $b \leq d$. Thus $\mathbb{W}^{\lambda_{\underline{\theta}}^{(b_L)} / \mu_{\underline{\theta}}^{(b_L)}}$ and $\mathbb{W}^{\lambda_{\underline{\theta}'}^{(b_L)} / \mu_{\underline{\theta}'}^{(b_L)}}$ are irreducible. Once again, $M \cong M'$ will then imply that the boxes in $\tab_{\underline{\theta}}$ with values in $\Bl_{L,b_L}$ are exactly the same as the boxes in $\tab_{\underline{\theta}'}$ with values in $\Bl_{L,b_L}$.

So far we have not used our hypothesis; the above arguments always hold. We will now show that the hypothesis implies that the boxes in $\tab_{\underline{\theta}}$ in $\Bl_{L,k}$ are exactly the same as the boxes in $\tab_{\underline{\theta}'}$ in $\Bl_{L,k}$ for all blocks. This will imply our desired result, since a degree r head is completely determined by the block membership of its entries (this is an easy exercise using Proposition~\ref{p:HeadLComb}, or see \cite[Lemma~3.1.10]{HodgesLakshmibai}).

The hypothesis implies one of two possible cases. The first case is that there exists a minimal $n$ such that for $1 \leq k < n$ we have $h_1 + \cdots + h_k + 1 \geq N_1 + \cdots + N_k$ and for $n \leq k < b_L - 1$ we have $h_{k+1} + \cdots + h_{b_L} < 2$. This implies, by Corollary \ref{c:headCriterionMain}\eqref{c:headCriterionMain3}, that all boxes in $\tab_{\underline{\theta}}$ and $\tab_{\underline{\theta}'}$ with values greater than those in $\Bl_{L,n}$ must appear in the last row. Additionally, we know that $\tab_{\underline{\theta}}$ and $\tab_{\underline{\theta}'}$ are semistandard and the number of boxes with values in each block is equal. These combine to imply that the boxes in $\tab_{\underline{\theta}}$ in $\Bl_{L,k+1}$ are exactly the same as the boxes in $\tab_{\underline{\theta}'}$ in $\Bl_{L,k+1}$ for $n \leq k < b_L$. Now consider the boxes with values in $\Bl_{L,n}$. As we noted in Remark~\ref{r:headFormat}, $h_1 + \cdots + h_{n-1} + 1 \geq N_1 + \cdots + N_{n-1}$ implies that any boxes in $\tab_{\underline{\theta}}$ in rows greater than $h_1 + \cdots + h_{n-1} + 1$ can not have values less than those in $\Bl_{L,n}$. Thus any boxes in these rows that do not have values larger than those in $\Bl_{L,n}$ must be filled by values in $\Bl_{L,n}$. By Corollary \ref{c:headCriterionMain}\eqref{c:headCriterionMain1} the remaining boxes in $\tab_{\underline{\theta}}$ with values in $\Bl_{L,n}$ must all be in row $h_1 + \cdots + h_{n-1} + 1$ directly to the left of the values larger than those in $\Bl_{L,n}$. The same argument holds for the location of boxes in $\tab_{\underline{\theta}'}$ with values in $\Bl_{L,n}$. Thus, since they each have the same number of boxes with values in each fixed block, the boxes in $\tab_{\underline{\theta}}$ in $\Bl_{L,n}$ are exactly the same as the boxes in $\tab_{\underline{\theta}'}$ in $\Bl_{L,n}$. Now proceed inductively all the way down to $\Bl_{L,2}$ to conclude the argument.

The second case is that there exists no minimal $n$ as in the first case. This simply means that $h_1 + \cdots + h_k + 1 \geq N_1 + \cdots + N_k$ for $1 \leq k < b_L - 1$. In this case we proceed immediately to the inductive step in the first case.

\noindent $(\Rightarrow)$ For this direction we will prove the contrapositive, that is, our hypothesis will be that there exists a $k$ such that $h_1 + \cdots + h_k + 1 < N_1 + \cdots + N_k$ and $h_{k+1} + \cdots + h_{b_L} \geq 2$. Our goal will be to show that $\MCC$ is never satisfied by exhibiting two nonequal standard degree r heads with isomorphic irreducible $L$-submodules.

Let $b$ be the maximum index less than or equal $k$ such that $h_b < N_b$, and $c$ the minimum index greater than $k$ such that $h_c > 0$. Both these indices must exist by our hypothesis. Now we will define the non-negative integers $m_1 ,\ldots , m_{b_L}$ as follows. If $b=k$, then set $m_i = h_i$ for all $1 \leq i \leq k$. Otherwise, set $m_b = h_b + 1$, $m_k = h_k - 1$, and $m_i = h_i$ for all $1 \leq i \leq k$ with $i \neq b$ and $i \neq k$. If $c=k+1$, then set $m_i = h_i$ for all $k+1 \leq i \leq b_L$. Otherwise, set $m_{k+1} = h_{k+1} + 1$, $m_c = h_c - 1$, and $m_i = h_i$ for all $k+1 \leq i \leq b_L$ with $i \neq k+1$ and $i \neq c$. 

Then $m_1 ,\ldots , m_{b_L}$ are non-negative integers satisfying the conditions of Lemma~\ref{l:headCriterion}. We also have that $m_k < N_k$ and $m_{k+1} > 0$. If $b\neq1$, then $m_1 = h_1 < N_1$ by \eqref{e:hReduced}. If $b=1$, then this would imply by the maximality of $b$ that $h_2 = N_2,\ldots,h_k=N_k$ and so by the hypothesis $m_1 = h_1 + 1 < N_1$. Thus, in both cases, $m_1 < N_1$. If $c \neq b_L$ then $m_{b_L}=h_{b_L} > 0$ by \eqref{e:hReduced}. Otherwise, if $c=b_L$, the minimality of $c$ implies that $h_{k+1}=\cdots=h_{b_L-1}=0$, which by the hypothesis implies that $h_{b_L} \geq 2$. Thus $m_{b_L} = h_{b_L}-1 \geq 1$. Hence in both cases, $m_{b_L} > 0$. Using these properties we may construct four degree 1 heads of type $L$.
\begin{center}
$\begin{array}{r@{\hspace{4pt}}l}
\theta_1=& \Theta(m_1 ,\ldots , m_{b_L}) \\[4pt]
\theta_2=& \Theta(m_1+1,m_2,\ldots,m_{k-1},m_k + 1, m_{k+1} - 1,m_{k+2},\ldots, m_{b_L - 1},m_{b_L} - 1) \\[4pt]
\theta_3=& \Theta(m_1,\ldots,m_{k-1},m_k + 1, m_{k+1} - 1,m_{k+2},\ldots,m_{b_L}) \\[4pt]
\theta_4=& \Theta(m_1+1,m_2,\ldots,m_{b_L - 1},m_{b_L} - 1) \\
\end{array}$
\end{center}

It is an easy check to verify that all four of these satisfy the conditions from Lemma~\ref{l:headCriterion}. Recalling that for two degree 1 heads, $\Theta(p_1 ,\ldots , p_{b_L}) \leq \Theta(q_1 ,\ldots , q_{b_L})$ if and only if $p_1 + \cdots + p_k \geq q_1 + \cdots + q_k$ for all $1 \leq k \leq b_L$ we see that $\theta_2 \leq \theta_1$ and $\theta_4 \leq \theta_3$. Thus $\underline{\theta} = (\theta_1, \theta_2)$ and $\underline{\theta}' = (\theta_3, \theta_4)$ are two non-equal standard degree 2 heads. We have
\begin{center}
$\mathbb{W}_{\underline{\theta}} = \mathbb{W}^{(2^{m_1},1)} \otimes \mathbb{W}^{(2^{m_2},1)/(1)} \otimes \cdots \otimes \mathbb{W}^{(2^{m_k},1,1)/(1)} \otimes \mathbb{W}^{(2^{m_{k+1}},1)/(1,1)} \otimes \mathbb{W}^{(2^{m_{k+2}},1)/(1)} \otimes \cdots \otimes \mathbb{W}^{(2^{m_{b_L}})/(1)}$
\end{center}
and
\begin{center}
$\mathbb{W}_{\underline{\theta}'} = \mathbb{W}^{(2^{m_1},1)} \otimes \mathbb{W}^{(2^{m_2},1)/(1)} \otimes \cdots \otimes \mathbb{W}^{(2^{m_k + 1})/(1)} \otimes \mathbb{W}^{(2^{m_{k+1}-1},1)} \otimes \mathbb{W}^{(2^{m_{k+2}},1)/(1)} \otimes \cdots \otimes \mathbb{W}^{(2^{m_{b_L}})/(1)}$
\end{center}

The two $L$-modules listed above only differ in the $k$th and $(k+1)$th factors. The Weyl module $\mathbb{W}^{(2^{m_{k}},1)}$ is a $\GL_{N_k}$ submodule of both $\mathbb{W}^{(2^{m_k},1,1)/(1)}$ and $\mathbb{W}^{(2^{m_k + 1})/(1)}$ by Lemma~\ref{lemma:LRSphericalClassH}\eqref{lemma:LRSphericalClassH1}\eqref{lemma:LRSphericalClassH2}. Additionally, Lemma~\ref{lemma:LRSphericalClassH}\eqref{lemma:LRSphericalClassH3} implies that $\mathbb{W}^{(2^{m_{k+1}-1},1)}$ is a $\GL_{N_{k+1}}$-submodule of $\mathbb{W}^{(2^{m_{k+1}},1)/(1,1)}$. These combine to imply that
\begin{center}
$\mathbb{W}^{(2^{m_1},1)} \otimes \mathbb{W}^{(2^{m_2},1)/(1)} \otimes \cdots \otimes \mathbb{W}^{(2^{m_{k}},1)} \otimes \mathbb{W}^{(2^{m_{k+1}-1},1)} \otimes \mathbb{W}^{(2^{m_{k+2}},1)/(1)} \otimes \cdots \otimes \mathbb{W}^{(2^{m_{b_L}})/(1)}$
\end{center}
is an $L$-submodule of both $\mathbb{W}_{\underline{\theta}}$ and $\mathbb{W}_{\underline{\theta}'}$. This indicates that criterion $\MCC$ is violated and concludes our proof.
\end{proof}

\begin{proposition}
\label{p:MCSatCrit}
Let $(w,d,N,L)$ be a reduced Levi-Schubert quadruple. The multiplicity-free criterion $\MC$ from Proposition~\ref{p:MultFreeHighestLevel} is satisfied if and only if for all $1 < k < b_L$ at least one the five following conditions holds
\begin{enumerate}
\item $N_k = 1$
\item $h_1 + \cdots + h_{k-1} + 1 \geq N_1 + \cdots + N_{k-1}$
\item $h_k=N_k$ with $h_1 + \cdots + h_{k-1} + 2 \geq N_1 + \cdots + N_{k-1}$
\item $h_k > 0$ with $h_{k+1} + \cdots + h_{b_L} < 2$
\item $h_k=0$ with $h_{k+1} + \cdots + h_{b_L} \leq 2$
\end{enumerate}
\end{proposition}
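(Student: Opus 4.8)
The plan is to carry out a case analysis parallel to the proof of Proposition~\ref{p:MCCSatCrit}, but with the Thomas--Yong classification of multiplicity-free skew Schur functions (Theorem~\ref{T:skewMultFree}) and Lemma~\ref{l:MultFreePolyNotFunction} taking the place of the direct Littlewood--Richardson computations used there. The first reduction is to the middle blocks $1<k<b_L$: for every standard head $\underline{\theta}$ one has $\mu_{\underline{\theta}}^{(1)}=\emptyset$, so $\mathbb{W}^{\lambda_{\underline{\theta}}^{(1)}/\mu_{\underline{\theta}}^{(1)}}(\mathbb{C}^{N_1})$ is irreducible; and $\lambda_{\underline{\theta}}^{(b_L)}$ is a rectangle, so $s_{\lambda_{\underline{\theta}}^{(b_L)}/\mu_{\underline{\theta}}^{(b_L)}}$ is the single Schur function indexed by the complement of $\mu_{\underline{\theta}}^{(b_L)}$ in it and $\mathbb{W}^{\lambda_{\underline{\theta}}^{(b_L)}/\mu_{\underline{\theta}}^{(b_L)}}(\mathbb{C}^{N_{b_L}})$ is irreducible. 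Thus $\MC$ is automatic at $k=1$ and $k=b_L$, which is why the conditions range only over $1<k<b_L$.

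For the direction $(\Leftarrow)$, I would fix a middle block $k$ satisfying one of (1)--(5), fix $r\geq 1$ and $\underline{\theta}\in\He_{L,r}^{std}$, and set $\lambda/\mu:=\lambda_{\underline{\theta}}^{(k)}/\mu_{\underline{\theta}}^{(k)}$; by Remark~\ref{r:weylModuleMultFree} and \eqref{e:SchurBasic} it suffices to show that $s_{\lambda/\mu}(x_1,\ldots,x_{N_k})$, in basic form, is multiplicity free. The five conditions are handled in turn, in each case reading off the admissible shapes of $\lambda/\mu$ from Corollary~\ref{c:headCriterionMain} and Remark~\ref{r:headFormat}. Condition (1) makes the polynomial a single monomial. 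Condition (2) forces $\mu$ to be empty or a single row (Remark~\ref{r:headFormat}), so Pieri's rule bounds every coefficient $\lr{\mu}{\nu}{\lambda}$ by $1$. Conditions (4) and (5) force, via Corollary~\ref{c:headCriterionMain}\eqref{c:headCriterionMain3}, all entries of $\tab_{\underline{\theta}}$ larger than $\Bl_{L,k}$ into the bottom one or two rows, so $\lambda$ differs from a rectangle only in its last row or two, and the hypothesis on $h_k$ pins $\mu$ down enough that $\lambda/\mu$ falls under Theorem~\ref{T:skewMultFree}, or else into the exceptional family of Lemma~\ref{l:MultFreePolyNotFunction} where the skew Schur function is not multiplicity free but its specialization to $N_k$ variables is. Condition (3) makes $\mu$ a rectangle of $m^n$-shortness at most $2$ with $\lambda^{\#}$ a (possibly degenerate) fat hook, again covered by Theorem~\ref{T:skewMultFree} or Lemma~\ref{l:MultFreePolyNotFunction}. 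In all cases $s_{\lambda/\mu}(x_1,\ldots,x_{N_k})$ is multiplicity free, so $\MC$ holds.

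For $(\Rightarrow)$ I would prove the contrapositive, imitating the construction in the corresponding half of Proposition~\ref{p:MCCSatCrit}. Assume some middle block $k$ violates all of (1)--(5); unwinding the negations, $N_k\geq 2$, the overhang $O:=\sum_{i<k}(N_i-h_i)$ is at least $2$, either $h_k<N_k$ or $O\geq 3$, and either $h_k>0$ with $h_{k+1}+\cdots+h_{b_L}\geq 2$ or $h_k=0$ with $h_{k+1}+\cdots+h_{b_L}\geq 3$. In each of these cases I would pick indices $b\leq k-1$ with $h_b<N_b$ and $c>k$ with $h_c>0$, perturb $(h_1,\ldots,h_{b_L})$ to non-negative integers $(m_1,\ldots,m_{b_L})$ as in Proposition~\ref{p:MCCSatCrit} so as to produce slack before block $k$, inside it, and after it, and then, using the $\Theta$-notation and Lemma~\ref{l:headCriterion} to certify each piece is a genuine degree-$1$ head of type $L$, assemble a standard degree-$3$ head $\underline{\theta}=(\theta_1,\theta_2,\theta_3)$ whose block-$k$ skew shape is, in basic form, $(3^{m},2,1)/(2,1)$ for a suitable $1\leq m\leq N_k-1$. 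By Lemma~\ref{lemma:LRSphericalClassH}\eqref{lemma:LRSphericalClassH4} there is then a partition $\nu$ with $\ell(\nu)=m+1\leq N_k$ and $\lr{\mu_{\underline{\theta}}^{(k)}}{\nu}{\lambda_{\underline{\theta}}^{(k)}}=2$, so by \eqref{e:SchurPolynomial} and Remark~\ref{r:weylModuleMultFree} the module $\mathbb{W}^{\lambda_{\underline{\theta}}^{(k)}/\mu_{\underline{\theta}}^{(k)}}(\mathbb{C}^{N_k})$ is not multiplicity free and $\MC$ fails.

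The main obstacle I anticipate is the $(\Rightarrow)$ construction. One must verify that in each way in which (1)--(5) can fail --- $h_k$ zero, strictly between $0$ and $N_k$, or equal to $N_k$ with $O\geq 3$ --- there is exactly enough room in block $k$ and in the blocks immediately to its left and right to realise $(3^m,2,1)/(2,1)$ inside an honest standard head while keeping $m\leq N_k-1$, so that the offending partition $\nu$ has length at most $N_k$ and hence does not vanish upon restriction to $N_k$ variables. This piece of numerical bookkeeping is the mirror image of the subtlety isolated in Lemma~\ref{l:MultFreePolyNotFunction} and used on the $(\Leftarrow)$ side, and it is what forces the precise thresholds appearing in conditions (2)--(5).
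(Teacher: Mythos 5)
Your proposal matches the paper's own argument in all essentials: for $(\Leftarrow)$ the same five-case analysis, using Corollary~\ref{c:headCriterionMain} and Remark~\ref{r:headFormat} to constrain $\lambda_{\underline{\theta}}^{(k)}/\mu_{\underline{\theta}}^{(k)}$ and then Theorem~\ref{T:skewMultFree} together with the specialization Lemma~\ref{l:MultFreePolyNotFunction} (which the paper in fact needs only in the case $h_k=N_k$); and for $(\Rightarrow)$ the same contrapositive construction, perturbing $(h_1,\ldots,h_{b_L})$ and using Lemma~\ref{l:headCriterion} to build a standard degree-$3$ head whose block-$k$ shape is $(3^{m},2,1)/(2,1)$ with $0<m<N_k$, so that Lemma~\ref{lemma:LRSphericalClassH}\eqref{lemma:LRSphericalClassH4} yields a coefficient $2$ for a $\nu$ of length $m+1\leq N_k$. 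The bookkeeping you flag as the remaining obstacle is exactly what the paper carries out, via three sub-cases defining the $m_i$ and four case-by-case choices of the three degree-$1$ heads, so your plan is sound and coincides with the published proof.
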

\begin{proof}
\noindent $(\Leftarrow)$ Let $\underline{\theta}$ be a standard degree r head. Fix a $1 < k < b_L$, and set $\lambda = \lambda_{\underline{\theta}}^{(k)}$ and $\mu = \mu_{\underline{\theta}}^{(k)}$. We will show that $\mathbb{W}^{\lambda / \mu}$ is multiplicity free, and thus $\MC$ is satisfied. We have five possible cases.

\noindent \textbf{Case 1}: $N_k = 1$. Using Lemma~\ref{lemma:LRSphericalClassH}\eqref{lemma:LRSphericalClassH5} we see that any Littlewood-Richardson coefficient with $\nu$ of length 1 is either zero or one. Thus the decomposition of $\mathbb{W}^{\lambda / \mu}$ into irreducible $\GL_1$-modules is always multiplicity free.

\noindent \textbf{Case 2}: $h_1 + \cdots + h_{k-1} + 1 \geq N_1 + \cdots + N_{k-1}$. As noted in Remark~\ref{r:headFormat}, this implies that in the skew diagram $\mathbb{W}^{\lambda / \mu}$, we must have $\mu=(p)$ or $\mu=\emptyset$. In either case, we have by Theorem~\ref{T:skewMultFree} and Remark~\ref{r:weylModuleMultFree} that $\mathbb{W}^{\lambda / \mu}$ is multiplicity free.

\noindent \textbf{Case 3}: $h_k=N_k$ with $h_1 + \cdots + h_{k-1} + 2 \geq N_1 + \cdots + N_{k-1}$. Note that this also implies that $h_1 + \cdots + h_{k} + 2 \geq N_1 + \cdots + N_{k}$. Using similar reasoning as in Remark~\ref{r:headFormat} we see that the earliest row of $\tab_{\underline{\theta}}$ that can contain values in $\Bl_{L,k}$ is row $h_1 + \cdots h_{k-1} +  1$ and the latest row is $h_1 + \cdots + h_k + 2$. We further know that the earliest row in which values greater than $\Bl_{L,k}$ can appear is row $h_1 + \cdots + h_k + 1$. Additionally, the latest row in which values less than $\Bl_{L,k}$ can appear is row $N_1 + \cdots + N_{k-1} \leq h_1 + \cdots + h_{k-1} + 2$. These combine to imply that $\lambda / \mu$ is of the form  $(r^{s},p,q) / (a,b)$ for some $0 \leq q \leq p < r$, $0 \leq b \leq a < r$, and $h_k-2 \leq s \leq h_k=N_k$. The cases where either $b$ or $q$ are zero result in $\mu=(a)$ or $(\lambda)^\# = (r-p)$ respectively. In these cases Theorem~\ref{T:skewMultFree} and Remark~\ref{r:weylModuleMultFree} give us our multiplicity free result. When both $b$ and $q$ are not zero we must have that $s=h_k=N_k$. By Lemma~\ref{l:MultFreePolyNotFunction} we have that the skew Schur polynomial $\sch{\lambda / \mu}(x_1,\ldots,x_{N_k})$ is multiplicity free. Thus $\mathbb{W}^{\lambda / \mu}$ is multiplicity free. 

\noindent \textbf{Case 4}: $h_k>0$ with $h_{k+1} + \cdots + h_{b_L} < 2$. In this case, Corollary \ref{c:headCriterionMain}\eqref{c:headCriterionMain3} implies that the boxes with values greater than those in $\Bl_{L,k}$ can only appear in row $d$ of $\tab_{\underline{\theta}}$. Setting $m$ equal to the first entry in $\lambda$ and $n$ equal to the number of entries in $\lambda$, the preceding remarks imply that the $m^n$-complement $(\lambda)^\# = (p)$ or  $(\lambda)^\# = \emptyset$. In either case, Theorem~\ref{T:skewMultFree} and Remark~\ref{r:weylModuleMultFree} indicate that $\mathbb{W}^{\lambda / \mu}$ is multiplicity free.

\noindent \textbf{Case 5}: $h_k=0$ with $h_{k+1} + \cdots + h_{b_L} \leq 2$. We once again use Corollary \ref{c:headCriterionMain}\eqref{c:headCriterionMain3} to see that the boxes in $\tab_{\underline{\theta}}$ with values greater than those in $\Bl_{L,k}$ can only appear in row $d-1$ or $d$. However, since $h_k=0$ we also have $h_{k} + \cdots + h_{b_L} \leq 2$. Thus Corollary \ref{c:headCriterionMain}\eqref{c:headCriterionMain3} also gives us that values in $\Bl_{L,k}$ can only appear in row $d-1$ or $d$. Hence, if a box has a value in $\Bl_{L,k}$, then the box must be in row $d-1$ or $d$. Thus a column that has a box with a value in $\Bl_{L,k}$ can only have a box in the $d$th row with a value greater than those in $\Bl_{L,k}$. This combines with the fact that $\tab_{\underline{\theta}}$ is semistandard and the definition of the skew diagrams to imply that the $m^n$-complement $(\lambda)^\# = (p)$ for some positive integer $p$ or  $(\lambda)^\# = \emptyset$. This gives us our desired result as in the previous case. 

\noindent $(\Rightarrow)$ We will prove the contrapositive. That is, suppose there is a $k$ where $N_k \neq 1$, $h_1 + \cdots + h_{k-1} + 1 < N_1 + \cdots + N_{k-1}$ or if $h_k = N_k$ then $h_1 + \cdots + h_{k-1} + 2 < N_1 + \cdots + N_{k-1}$, and either $h_k=0$ with $h_{k+1} + \cdots + h_{b_L} > 2$ or $h_k>0$ with $h_{k+1} + \cdots + h_{b_L} \geq 2$. Set $\lambda = \lambda_{\underline{\theta}}^{(k)}$ and $\mu = \mu_{\underline{\theta}}^{(k)}$. Our goal will be the exhibit a standard degree 3 head $\underline{\theta}$ such that $\mathbb{W}^{\lambda / \mu}$ is not a multiplicity free $\GL_{N_k}$-module. 

In the case where $h_k=0$ let $s \geq k+1$ be the minimum index such that $h_s \neq 0$; such an index must exist by our hypothesis. We define the non-negative integers $m_1,\ldots,m_{b_L}$ by setting $m_k=1$, $m_s = h_s - 1$, and $m_i = h_i$ for all other indices. In the case where $N_k > h_k>0$ we simply set $m_i = h_i$ for all indices. In the case where $h_k=N_k$ let $t < k$ be the maximum index such that $h_t < N_t$. We define the non-negative integers $m_1,\ldots,m_{b_L}$ by setting $m_k=h_k-1$, $m_t = h_t + 1$, and $m_i = h_i$ for all other indices.  In all three cases, the integers $m_1,\ldots,m_{b_L}$ satisfy the conditions of Lemma~\ref{l:headCriterion}. Further, in all three cases,
\begin{equation}
\label{e:mc1Neww}
m_{k+1} + \cdots + m_{b_L} \geq 2, m_1 + \cdots + m_{k-1} + 1 < N_1 + \cdots + N_{k-1}\textrm{, and }N_k > m_k > 0.
\end{equation}

Let $p < k$ be the maximal index such that $m_k < N_k$ and let $q \geq k+1$ be the minimal index such that $m_q \neq 0$; such indices must exist by \eqref{e:mc1Neww}.  We have four possible cases.

\noindent \textbf{Case 1}: $p \neq 1$ and $q < b_L$. Then we may define three degree 1 heads.
\begin{center}
$\begin{array}{r@{\hspace{4pt}}l}
\theta_1=& \Theta(m_1 ,\ldots , m_{b_L}) \\[4pt]
\theta_2=& \Theta(m_1,\ldots,m_{p}+1,\ldots,m_k,\ldots,m_{q} - 1,\ldots,m_{b_L}) \\[4pt]
\theta_3=& \Theta(m_1+1,\ldots,m_{p}+1,\ldots,m_k,\ldots,m_{q} - 1,\ldots,m_{b_L}-1) \\[4pt]
\end{array}$
\end{center}
These are easily verified to satisfy the conditions of Lemma~\ref{l:headCriterion} since $m_1 = h_1 < N_1$ and $m_{b_L}=h_{b_L}>0$ by \eqref{e:hReduced}. Further $\theta_1 \geq \theta_2 \geq \theta_3$, and so $\underline{\theta}:=(\theta_1,\theta_2,\theta_3)$ is a standard degree 3 head. A careful analysis of $\mathbb{W}_{\underline{\theta}}$ reveals that $\lambda / \mu = (3^{m_k},2,1) / (2,1)$ with $N_k > m_k > 0$. For $\nu = (3^{m_k - 1}, 2, 1)$, we have by Lemma~\ref{lemma:LRSphericalClassH}\eqref{lemma:LRSphericalClassH4} that $\lr{\mu}{\nu}{\lambda}=2$. Since the length of $\nu$ is $m_k+1 \leq N_k$ we have that $\sch{\lambda / \mu}(x_1,\ldots,x_{N_k})$ is not multiplicity free, and hence $\mathbb{W}^{\lambda / \mu}$ is not multiplicity free.

\noindent \textbf{Case 2}: $p = 1$ and $q = b_L$. Note that this implies, by the maximality of $p$ and \eqref{e:mc1Neww}, that $m_1 + 1 < N_1$. It also implies, by the minimality of $q$ and \eqref{e:mc1Neww}, that $m_{b_L} \geq 2$. Then we may define three degree 1 heads.
\begin{center}
$\begin{array}{r@{\hspace{4pt}}l}
\theta_1=& \Theta(m_1 ,\ldots , m_{b_L}) \\[4pt]
\theta_2=& \Theta(m_1+1,\ldots,m_k,\ldots,m_{b_L}-1) \\[4pt]
\theta_3=& \Theta(m_1+2,\ldots,m_k,\ldots,m_{b_L}-2) \\[4pt]
\end{array}$
\end{center}
As in the previous case these are easily verified to be degree 1 heads with $\theta_1 \geq \theta_2 \geq \theta_3$, and so $\underline{\theta}:=(\theta_1,\theta_2,\theta_3)$ is a standard degree 3 head. Once again $\lambda / \mu = (3^{m_k},2,1) / (2,1)$ and thus $\mathbb{W}^{\lambda / \mu}$ is not multiplicity free.

\noindent \textbf{Case 3}: $p = 1$ and $q < b_L$. We define three degree 1 heads.
\begin{center}
$\begin{array}{r@{\hspace{4pt}}l}
\theta_1=& \Theta(m_1 ,\ldots , m_{b_L}) \\[4pt]
\theta_2=& \Theta(m_1+1,\ldots,m_k,\ldots,m_{q} - 1,\ldots,m_{b_L}) \\[4pt]
\theta_3=& \Theta(m_1+2,\ldots,m_k,\ldots,m_{q} - 1,\ldots,m_{b_L}-1) \\[4pt]
\end{array}$
\end{center}
Then the standard degree 3 head $\underline{\theta}:=(\theta_1,\theta_2,\theta_3)$ has associated skew diagram $\lambda / \mu= (3^{m_k},2,1) / (2,1)$. Thus $\mathbb{W}^{\lambda / \mu}$ is not multiplicity free.

\noindent \textbf{Case 4}: $p > 1$ and $q = b_L$. The three degree 1 heads in this case will be the following.
\begin{center}
$\begin{array}{r@{\hspace{4pt}}l}
\theta_1=& \Theta(m_1 ,\ldots , m_{b_L}) \\[4pt]
\theta_2=& \Theta(m_1,\ldots,m_{p}+1,\ldots,m_k,\ldots,m_{b_L}-1) \\[4pt]
\theta_3=& \Theta(m_1+1,\ldots,m_{p}+1,\ldots,m_k,\ldots,m_{b_L}-2) \\[4pt]
\end{array}$
\end{center}
Once again for $\underline{\theta}:=(\theta_1,\theta_2,\theta_3)$, the associated skew diagram is $\lambda / \mu= (3^{m_k},2,1) / (2,1)$ indicating $\mathbb{W}^{\lambda / \mu}$ is not multiplicity free.

Thus in all four possible cases criterion $\MC$ is not satisfied.
\end{proof}

We are now ready to use the two previous propositions to prove our main theorem. Fortunately, the conditions may be stated in a simpler manner when both are required to hold.

\begin{theorem}
\label{t:mainSphericalClassification}
The stable, reduced Levi-Schubert quadruple $(w,d,N,L)$ is multiplicity free (equivalently spherical) if and only if one of the following holds

\begin{enumerate}[label=(\roman*)]
\item $b_L \leq 2$
\item $b_L = 3$, and at least one of $N_2 = 1$, $h_1 + 1 \geq N_1$, $N_2=h_2$ with $h_1 + 2 \geq N_1$, $h_2 > 0$ with $h_3 < 2$, $h_2 = 0$ with $h_3 \leq 2$ holds
\item $b_L \geq 4$, $p_w = 2$ or if $p_w > 2$, then $h_1 + \cdots + h_{p_w - 1} + 1 \geq N_1 + \cdots + N_{p_w - 1}$
\end{enumerate}
where $1 < p_w < b_L - 1$ is the minimum index such that $h_{p_w + 1} + \cdots + h_{b_L} < 2$. Such an index may not exist, if it does not set $p_w = b_L - 1$.
\end{theorem}
\begin{proof}
We will prove the above in three cases depending on the value of $b_L$.

\noindent \textbf{Case 1}: $b_L \leq 2$. In this case, the two sets of conditions from Proposition~\ref{p:MCSatCrit} and Proposition~\ref{p:MCCSatCrit} will always be vacuously true, and hence  $(w,d,N,L)$ will always be multiplicity free.

\noindent \textbf{Case 2}: $b_L = 3$. In this case the conditions from Proposition~\ref{p:MCCSatCrit} are vacuously true. The conditions from Proposition~\ref{p:MCSatCrit} for $k=2$ are precisely that at least one of $N_2 = 1$, $h_1 + 1 \geq N_1$, $N_2=h_2$ with $h_1 + 2 \geq N_1$, $h_2 > 0$ with $h_3 < 2$, $h_2 = 0$ with $h_3 \leq 2$ holds.

\noindent \textbf{Case 3}: $b_L \geq 4$. We start with the case where $p_w < b_L - 1$. Then we have that $h_{k + 1} + \cdots + h_{b_L} < 2$ for all $p_w \leq k < b_L$. Thus the conditions for Proposition~\ref{p:MCSatCrit} and Proposition~\ref{p:MCCSatCrit} are always satisfied for such $k$. If $p_w = 2$, we are done. Otherwise, if $p_w > 2$, then $h_1 + \cdots + h_{p_w - 1} + 1 \geq N_1 + \cdots N_{p_w - 1}$ implies that $h_1 = N_1 + 1$, $h_2 = N_2$, \ldots, $h_{p_w - 1} = N_{p_w - 1}$. This means that $h_1 + \cdots + h_k + 1 \geq N_1 + \cdots + N_k$ for all $1 \leq k \leq p_w - 1$. Hence the conditions for Proposition~\ref{p:MCSatCrit} and Proposition~\ref{p:MCCSatCrit} are always satisfied for such $k$. Thus the conditions of these two propositions are always satisfied.

In the case where $p_w = b_L - 1$, we can see that $h_1 + \cdots + h_k + 1 \geq N_1 + \cdots + N_k$ for all $1 \leq k \leq b_L - 2$. This precisely means that Proposition~\ref{p:MCSatCrit} is satisfied for $1 \leq k \leq b_L - 1$ and Proposition~\ref{p:MCCSatCrit} is satisfied for $1 \leq k \leq b_L - 2$. Thus they are always satisfied. Hence Proposition~\ref{p:MultFreeHighestLevel} gives us that $(w,d,N,L)$ is multiplicity free.

For the other direction, we assume that $(w,d,N,L)$ is multiplicity free. Criterion $\MC$ and $\MCC$ are thus always satisfied. If $p_w > 2$ then we know that for $p_w - 1$ one of the conditions from Proposition~\ref{p:MCCSatCrit} must hold. The minimality of $p_w$ indicates that it can not be that  $h_{p_w} + \cdots + h_{b_L} < 2$, and hence it must be that $h_1 + \cdots + h_{p_w - 1} + 1 \geq N_1 + \cdots + N_{p_w - 1}$.
\end{proof}

When $L_{max}$ is the maximal Levi subgroup acting on $X(w)$ we may further simplify the our result.
\begin{corollary}
\label{c:mainSphericalClassification}
The stable, reduced Levi-Schubert quadruple $(w,d,N,L_{max})$ where $L_{max}$ is the maximal Levi acting on $X(w)$ is multiplicity free (equivalently spherical) if and only if one of the following holds
\begin{enumerate}[label=(\roman*)]
\item $b_{L_{max}} \leq 2$
\item $b_{L_{max}} = 3$, and at least one of $h_1 + 1 = N_1$ or $h_3 = 1$ holds.
\end{enumerate}
\end{corollary}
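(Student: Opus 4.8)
The plan is to deduce this directly from Theorem~\ref{t:mainSphericalClassification} by feeding in the sharper constraints that maximality of $L_{max}$ imposes. By Remark~\ref{r:MaxLevi}, specifically \eqref{e:hFormMax}, we have $0 < h_k < N_k$ and $N_k > 1$ for every $1 \leq k \leq b_{L_{max}}$. I would then run through the three cases of Theorem~\ref{t:mainSphericalClassification} according to the value of $b_{L_{max}}$, showing that cases (i) and (ii) specialize to the clauses stated here and that case (iii) is vacuous.

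For $b_{L_{max}} \leq 2$ there is nothing to do: Theorem~\ref{t:mainSphericalClassification}(i) already asserts the quadruple is multiplicity free, which is exactly clause (i). For $b_{L_{max}} = 3$, I would simplify the five alternatives of Theorem~\ref{t:mainSphericalClassification}(ii) one at a time against \eqref{e:hFormMax} and \eqref{e:hReduced}. The condition $N_2 = 1$ is excluded since $N_2 > 1$; the condition ``$N_2 = h_2$ with $h_1 + 2 \geq N_1$'' is excluded since $h_2 < N_2$; and ``$h_2 = 0$ with $h_3 \leq 2$'' is excluded since $h_2 > 0$. The condition $h_1 + 1 \geq N_1$ together with $h_1 < N_1$ becomes $h_1 + 1 = N_1$, and ``$h_2 > 0$ with $h_3 < 2$'' becomes simply $h_3 < 2$ (as $h_2 > 0$ always holds), i.e.\ $h_3 = 1$ since $h_3 > 0$. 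Hence for $b_{L_{max}} = 3$ the theorem's criterion reads ``$h_1 + 1 = N_1$ or $h_3 = 1$'', which is clause (ii).

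The substance is in showing that $b_{L_{max}} \geq 4$ never produces a multiplicity-free quadruple, i.e.\ that Theorem~\ref{t:mainSphericalClassification}(iii) is unsatisfiable under \eqref{e:hFormMax}. Since every $h_k \geq 1$, for any index $k$ with $k < b_{L_{max}} - 1$ the tail $h_{k+1} + \cdots + h_{b_{L_{max}}}$ has at least two summands and is therefore $\geq 2$; thus no index $p_w$ with $1 < p_w < b_{L_{max}} - 1$ exists, and the theorem's convention forces $p_w = b_{L_{max}} - 1$. If $p_w = 2$ then $b_{L_{max}} = 3$, contradicting $b_{L_{max}} \geq 4$; so $p_w > 2$, and Theorem~\ref{t:mainSphericalClassification}(iii) demands $h_1 + \cdots + h_{b_{L_{max}} - 2} + 1 \geq N_1 + \cdots + N_{b_{L_{max}} - 2}$. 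But $h_k \leq N_k - 1$ for each $k$ gives $h_1 + \cdots + h_{b_{L_{max}} - 2} \leq (N_1 + \cdots + N_{b_{L_{max}} - 2}) - (b_{L_{max}} - 2)$, so the inequality forces $b_{L_{max}} - 2 \leq 1$, i.e.\ $b_{L_{max}} \leq 3$, a contradiction. Combining the three cases then yields the corollary.

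The main obstacle, such as it is, is purely bookkeeping: in the $b_{L_{max}} = 3$ case one must carefully match each of the five alternatives against \eqref{e:hFormMax} and \eqref{e:hReduced}, and in the $b_{L_{max}} \geq 4$ case one must correctly pin down $p_w = b_{L_{max}} - 1$ before deriving the contradiction from $h_k \leq N_k - 1$. No new ideas beyond Theorem~\ref{t:mainSphericalClassification} and Remark~\ref{r:MaxLevi} are required.
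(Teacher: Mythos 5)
Your proposal is correct and follows essentially the same route as the paper: specialize Theorem~\ref{t:mainSphericalClassification} using the constraints $0 < h_k < N_k$ and $N_k > 1$ from Remark~\ref{r:MaxLevi}, reduce the $b_{L_{max}}=3$ alternatives to $h_1+1=N_1$ or $h_3=1$, and rule out $b_{L_{max}}\geq 4$ by forcing $p_w = b_{L_{max}}-1 > 2$ and contradicting $h_1+\cdots+h_{p_w-1}+1 \geq N_1+\cdots+N_{p_w-1}$ via $h_k \leq N_k - 1$. Your write-up just spells out the bookkeeping in slightly more detail than the paper does.
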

\begin{proof}
Since $L_{max}$ is the maximal Levi which acts on $X(w)$, we know that \eqref{e:hFormMax} holds. In the case where $b_{L_{max}} = 3$, this implies that the only conditions from Theorem~\ref{t:mainSphericalClassification} that can hold are $h_1 + 1 \geq N_1$ or $h_2 > 0$ with $h_3 < 2$. Using \eqref{e:hFormMax} we can further reduce these conditions to $h_1 + 1 = N_1$ or $h_3 = 1$. In the case where $b_{L_{max}} \geq 4$ we see that \eqref{e:hFormMax} implies $p_w=b_{L_{max}} - 1 > 2$. But then $h_1 + \cdots + h_{p_w - 1} + 1 < N_1 + \cdots + N_{p_w - 1}$ since each $h_k < N_k$. Thus, in this case, the conditions from Theorem~\ref{t:mainSphericalClassification} are never satisfied.
\end{proof}

\section{Toric Schubert varieties in the Grassmannian}
\label{sec:toric}

Many mathematicians have been interested in Toric degenerations of Schubert varieties. The second author and N. Gonciulea \cite{MR1417711} gave toric degenerations of Schubert varities in a miniscule $G/P$ and for certain Schubert varieties in $\SL_N / B$. Subsequently, this work was extended to all Schubert varieties in $\SL_N / B$ by R. Dehy and R.W.T Yu in \cite{MR1870638}. Building on these works, in \cite{MR1888475}, P. Caldero gave toric degenerations for Schubert varieties in any $G/P$. A natural related question is which Schubert varieties are themselves toric varieties. This has been answered entirely for Schubert varieties in $G/B$ by P. Karuppuchamy in \cite{MR3044412}. Along these lines, using Theorem~\ref{t:mainSphericalClassification} we get a description of a class of Schubert varieties in the Grassmannian that are toric varieties for a quotient of the maximal torus by a subtorus.
 
Note that if a stable, reduced Levi-Schubert quadruple $(w,d,N,T)$ is spherical then $X(w)$ is a toric variety. This follows from the fact that we have an open, dense $T$-orbit. Taking a point in this orbit and letting $H$ be the isotropy subgroup of $T$ at this point, we identify the dense $T$-orbit with $T/H$ and $X(w) = \overline{T / H}$.

\begin{proposition}
Let $(w,d,N,T)$ be a stable Levi-Schubert quadruple with reduction $(\overline{w},\overline{d},\overline{N},\overline{T})$. Then $(w,d,N,T)$ is spherical if and only if $\overline{w} = (2,\ldots,\overline{d},\overline{N})$ or $\overline{w} = (\overline{N})$.
\end{proposition}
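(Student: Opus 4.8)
The plan is to push everything through the reduction and then extract the answer from Theorem~\ref{t:mainSphericalClassification} (equivalently, from Propositions~\ref{p:MCSatCrit} and~\ref{p:MCCSatCrit}), exploiting that a torus has all blocks of size $1$.

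\textbf{Step 1 (reduce to the reduced case).} Since $T$ lies in every Borel subgroup, every Schubert variety is $T$-stable, so $(w,d,N,T)$ is stable and, by Lemma~\ref{l:reductionstable}, so is its reduction $(\overline w,\overline d,\overline N,\overline T)$; moreover $\overline T=\mathrm{pr}_w(T)$ is the full maximal torus of $\GL_{\overline N}$. By Proposition~\ref{l:reductionSpherical} it suffices to prove that the reduced quadruple $(\overline w,\overline d,\overline N,\overline T)$ is spherical if and only if $\overline w=(2,\ldots,\overline d,\overline N)$ or $\overline w=(\overline N)$. (We assume $w$ is not the identity, so $\overline d\geq 1$ and $\overline N\geq 2$.) Because $\overline T$ is a torus, $b_{\overline T}=\overline N$ and $N_k=1$ for all $k$; hence by \eqref{e:hForm} and \eqref{e:hReduced} each $h_k\in\{0,1\}$, with $h_1=0$ and $h_{\overline N}=1$. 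Write $S=\{k:h_k=1\}$ for the set of entries of $\overline w$, so $1\notin S$, $\overline N\in S$, and $|S|=\overline d$.

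\textbf{Step 2 ($\MC$ is free; rewrite $\MCC$).} For every $k$ with $1<k<b_{\overline T}$ condition (1) of Proposition~\ref{p:MCSatCrit} (namely $N_k=1$) holds, so $\MC$ is automatically satisfied for any reduced Levi-Schubert quadruple with $L$ a torus. Hence by Propositions~\ref{p:MultFreeHighestLevel} and~\ref{p:multFreeSphericalEquiv}, $(\overline w,\overline d,\overline N,\overline T)$ is spherical if and only if $\MCC$ holds. I would then specialize Proposition~\ref{p:MCCSatCrit}: using $h_i\leq 1$, $h_1=0$, $h_{\overline N}=1$, its condition (1) for index $k$ reads $h_1+\cdots+h_k=k-1$, i.e. $\{2,\ldots,k\}\subseteq S$, and its condition (2) reads $S\cap\{k+1,\ldots,\overline N-1\}=\emptyset$. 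Thus $\MCC$ holds if and only if for every $k$ with $1<k<\overline N-1$, either $\{2,\ldots,k\}\subseteq S$ or $S\cap\{k+1,\ldots,\overline N-1\}=\emptyset$.

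\textbf{Step 3 (the combinatorial equivalence).} I claim the displayed condition on $S$ is equivalent to $S\cap\{1,\ldots,\overline N-1\}$ being an initial segment $\{2,3,\ldots,t\}$ of $\{2,3,\ldots\}$ (with $t=1$ meaning the empty segment). Forward: if $S\cap\{1,\ldots,\overline N-1\}=\{2,\ldots,t\}$, then any admissible $k$ satisfies $k\leq t$ (giving the first alternative) or $k\geq t$ (giving the second). Converse: if this intersection is not such a segment, then since $1\notin S$ there exist $2\leq i<j\leq\overline N-1$ with $i\notin S$ and $j\in S$; choosing $i$ minimal and testing $k=i$ (note $1<i<\overline N-1$) violates both alternatives, so $\MCC$ fails. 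Finally, since $|S|=\overline d$ and $\overline N\in S$, the equality $S\cap\{1,\ldots,\overline N-1\}=\{2,\ldots,t\}$ forces $t=\overline d$, i.e. $\overline w=(2,3,\ldots,\overline d,\overline N)$, which for $\overline d=1$ reads as $(\overline N)$. One checks the small range $\overline N\leq 3$ is consistent: there no $k$ satisfies $1<k<\overline N-1$, so $\MCC$ is vacuous, and every reduced $\overline w$ with $\overline N\leq 3$ already has the stated form. This completes the equivalence.

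\textbf{Main obstacle.} Steps 1 and 2 are immediate from results proved earlier. The only genuine work is Step 3 — recognizing that the per-index conditions of Proposition~\ref{p:MCCSatCrit}, once translated to the torus, amount precisely to $S$ being an initial-segment-plus-top-entry set, and running the minimal-counterexample argument for the converse. The only other thing requiring care is the bookkeeping of the degenerate cases $\overline N\leq 3$ and $\overline d=1$, and matching the notation $(2,\ldots,\overline d,\overline N)$ to the empty-segment case.
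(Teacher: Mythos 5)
Your proof is correct and takes essentially the same approach as the paper: reduce to $(\overline w,\overline d,\overline N,\overline T)$ via Proposition~\ref{l:reductionSpherical}, then specialize the multiplicity-free classification to the torus, where every block has size $1$ so $\MC$ is automatic and only the $\MCC$-type condition bites. The only difference is packaging — you invoke Propositions~\ref{p:MCSatCrit} and~\ref{p:MCCSatCrit} directly and phrase the final combinatorics as an initial-segment condition on the entry set $S$, whereas the paper routes through Theorem~\ref{t:mainSphericalClassification} and its index $p_w$ — but the underlying computation is identical.
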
 
\begin{proof}
Note that for any $\overline{T}$ we have $b_{\overline{T}}=\overline{N}$ and $\Bl_{\overline{T},k}=\{ k \}$ for $1 \leq k \leq \overline{N}$. Recall that since $(\overline{w},\overline{d},\overline{N},\overline{T})$ is reduced $\overline{w}$ does not have its first entry equal to 1 and its last entry is equal to $\overline{N}$.

By Corollary~\ref{c:mainSphericalReduction} $(w,d,N,T)$ is spherical if and only if $(\overline{w},\overline{d},\overline{N},\overline{T})$ is spherical. We will show that $\overline{w}$ can satisfy the conditions of Theorem~\ref{t:mainSphericalClassification} if and only if it is of the form $(2,\ldots,\overline{d},\overline{N})$ or $(\overline{N})$. We first consider the case where $\overline{N}=2$. Then $\overline{d}=1$. Further, the fact that $(\overline{w},\overline{d},\overline{N},\overline{T})$ is reduced implies that $\overline{w}=(2)$. This $\overline{w}$ is of the form stated in the hypothesis and $(\overline{w},\overline{d},\overline{N},\overline{T})$ is spherical since $b_{\overline{T}}=2$. When $\overline{N}=3$, $\overline{d}$ is either 1 or 2. When $\overline{d}$ is 1, then $\overline{w}$ can only be $(3)$. Then $b_{\overline{T}}=3$ and $h_2=0$ with $h_3=1$. Thus $(\overline{w},\overline{d},\overline{N},\overline{T})$ is spherical with $\overline{w}$ of the form stated in the hypothesis. When $\overline{d}$ is 2, then $\overline{w}$ must be $(2,3)$. Then $b_{\overline{T}}=3$ and $h_2=1$ with $h_3=1$. Once again $(\overline{w},\overline{d},\overline{N},\overline{T})$ is spherical with $\overline{w}$ of the correct form.

We now conclude by considering $\overline{N}\geq4$. Then $b_{\overline{T}}\geq 4$ and Theorem~\ref{t:mainSphericalClassification} gives that $(\overline{w},\overline{d},\overline{N},\overline{T})$ will be spherical if and only if $p_w = 2$ or if $p_w > 2$, then $h_1 + \cdots + h_{p_w - 1} + 1 \geq N_1 + \cdots + N_{p_w - 1}$. The only way that $p_w$ can equal $2$ is if $\overline{d}$ equals 1 or 2 and $\overline{w}=(\overline{N})$ or $\overline{w}=(2,\overline{N})$ respectively. When $p_w > 2$, the only way that $\overline{w}$ can satisfy the condition $h_1 + \cdots + h_{p_w - 1} + 1 \geq N_1 + \cdots + N_{p_w - 1}$ is if $\overline{w} = (2,\ldots,p_w,\overline{N})=(2,\ldots,\overline{d},\overline{N})$.
\end{proof}

\begin{corollary}
\label{c:toricSchubert}
The Schubert variety $X(w)$ is a toric variety for a quotient of the maximal torus $T$ if in the reduction $(\overline{w},\overline{d},\overline{N},\overline{T})$ we have that $\overline{w} = (2,\ldots,\overline{d},\overline{N})$ or $w=(\overline{N})$. This is equivalent to $w$ being one of two possible forms
\begin{enumerate}[label=(\arabic*)]
\item $(1,\ldots,p,p+2,\ldots,d,f)$ for some integers $0 \leq p < d-1$ and $d < f \leq N$
\item $(1,\ldots,d-1,f)$ for some integer $d < f \leq N$
\end{enumerate}
\end{corollary}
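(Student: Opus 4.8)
The corollary is essentially a translation of the Proposition immediately preceding it, so the proof is short. First I would note that, by that Proposition, a stable quadruple $(w,d,N,T)$ is spherical precisely when $\overline{w}=(2,\ldots,\overline{d},\overline{N})$ or $\overline{w}=(\overline{N})$; in either case $X(w)$ is a spherical $T$-variety and hence has an open dense $T$-orbit. Since Schubert varieties in the Grassmannian are normal, picking a point $x$ in that orbit and setting $H=\mathrm{Stab}_T(x)$ identifies $X(w)=\overline{T\cdot x}$ with the toric variety $\overline{T/H}$ attached to the quotient torus $T/H$; as $T$ is the maximal torus of $\GL_N$, this yields the ``if'' assertion of the first sentence (this is just the observation recorded before the Proposition, applied to $(w,d,N,T)$ rather than to its reduction).

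It then remains to verify the combinatorial equivalence, which is a direct unwinding of the definition of the reduction from Section~\ref{sec:multFreeClass}. Write $w=(1,\ldots,p,\ell_{p+1},\ldots,\ell_d)$ with $\ell_{p+1}\neq p+1$, so that $\overline{d}=d-p$, $\overline{N}=\ell_d-p$, and $\overline{w}=(\ell_{p+1}-p,\ldots,\ell_d-p)$. If $\overline{d}=1$ the reduction is automatically of the form $\overline{w}=(\overline{N})$; this forces $p=d-1$, and writing $f=\ell_d$ the reducedness condition $\overline{w}_1=\overline{N}\neq1$ becomes $f>d$, so $w=(1,\ldots,d-1,f)$ with $d<f\le N$, which is form (2), and conversely every such $w$ reduces to $(\overline{N})$. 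If $\overline{d}\geq 2$, then $\overline{w}=(2,\ldots,\overline{d},\overline{N})$ says $\ell_{p+i}=p+i+1$ for $1\leq i\leq d-p-1$ together with $\ell_d=\overline{N}+p$; translating back, $\ell_j=j+1$ for $p+1\leq j\leq d-1$ and $\ell_d=:f$, where the requirement $\overline{N}>\overline{d}$ that makes $\overline{w}$ strictly increasing is exactly $f>d$, so $w=(1,\ldots,p,p+2,\ldots,d,f)$ with $0\leq p<d-1$ and $d<f\leq N$, which is form (1), and again the reverse translation is immediate.

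The argument is pure bookkeeping; the only points that need care are matching the two reducedness constraints ($\overline{w}_1\neq1$ and $\overline{w}_{\overline{d}}=\overline{N}$) with the stated ranges $0\leq p<d-1$ and $d<f\leq N$, and the boundary case $p=d-2$ of form (1), where the segment $p+2,\ldots,d$ degenerates to the single entry $d$. I do not anticipate any genuine obstacle, since all of the substantive content is carried by the preceding Proposition and by the normality of Schubert varieties.
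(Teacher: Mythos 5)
Your proposal is correct and matches the paper's (implicit) argument exactly: the corollary is obtained by combining the preceding Proposition with the observation that a spherical $T$-action yields an open dense $T$-orbit and hence a toric structure for a quotient of $T$, and then unwinding the definition of the reduction to translate $\overline{w}=(2,\ldots,\overline{d},\overline{N})$ or $\overline{w}=(\overline{N})$ into the two stated forms of $w$. Your bookkeeping of the constraints $0\leq p<d-1$, $d<f\leq N$ and the boundary cases is accurate, so nothing further is needed.
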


\bibliography{Classification.Spherical.RHodges.VLakshmibai}

\end{document}